\documentclass[12pt,reqno]{amsart}%
\usepackage{amsmath, amsfonts, amssymb, amsthm, amscd, amsbsy}
\usepackage{fancyhdr}
\usepackage[usenames,dvipsnames,svgnames,x11names,hyperref]{xcolor}
\usepackage{geometry}
\usepackage{graphicx}
\usepackage[pagebackref]{hyperref}%
\usepackage{amsmath}%
\usepackage{amsfonts}%
\usepackage{amssymb}
\usepackage{enumerate}

\usepackage[cmtip,all]{xy}
\usepackage{graphicx}
\usepackage{amssymb}
\usepackage{mathrsfs}
\usepackage{amsfonts}
\usepackage{amsmath}

\providecommand{\U}[1]{\protect\rule{.1in}{.1in}}
\hypersetup{backref=true,
pagebackref=true,
hyperindex=true,
colorlinks=true,
breaklinks=true,
urlcolor=NavyBlue,
linkcolor=Fuchsia,
bookmarks=true,
bookmarksopen=false,
filecolor=black,
citecolor=ForestGreen,
linkbordercolor=red
}
\newtheorem{theorem}{Theorem}[section]
\newtheorem*{theorem*}{Theorem}
\theoremstyle{plain}

\newtheorem{lemma}{Lemma}[section]

\newtheorem{remark}{Remark}[section]

\numberwithin{equation}{section}

\allowdisplaybreaks

\begin{document}

\title[Sharp trace inequalities for
fractional powers of the sublaplacian]{Sharp  trace inequalities for conformally invariant fractional powers of the sublaplacian on the Heisenberg group and the CR sphere}


\author{Qiaohua  Yang}
\address{School of Mathematics and Statistics, Wuhan University, Wuhan, 430072, People's Republic of China}

\email{qhyang.math@whu.edu.cn}

\author{Leyuan Yu}

\address{School of Mathematics and Statistics, Wuhan University, Wuhan, 430072, People's Republic of China}

\email{2023302011095@whu.edu.cn}

\thanks{The first  author  was partially supported by the National Natural Science Foundation of China(No.12471056).}


\subjclass[2010]{Primary: 46E35; 35R03;  22E25.}



\keywords{Sobolev trace inequality; Heisenberg group; CR sphere; best constant}

\begin{abstract}
We establish sharp Sobolev trace inequalities for conformally invariant fractional powers of the sublaplacian on the Heisenberg group and the CR sphere, extending the corresponding Euclidean results of Einav-Loss, Beckner, and Bez-Machihara-Sugimoto to these non-Euclidean settings.
In the limiting case, sharp trace Beckner-Onofri inequalities are also established on the CR sphere. The proofs are based on a duality argument due to Bez-Machihara-Sugimoto, together with the Frank-Lieb sharp form of the Hardy-Littlewood-Sobolev inequalities on the Heisenberg group and the CR sphere.
The same approach also yields trace Beckner-Onofri inequalities on the standard sphere.

  \end{abstract}

\maketitle


\section{Introduction}

Let \( n \geq 3 \) and denote by \(\mathbb{R}^{n}_{+} = \{ (x, t) \in \mathbb{R}^n : x \in \mathbb{R}^{n-1},  t > 0 \}\) the upper half-space in \(\mathbb{R}^n\).
The Sobolev trace inequality on \(\mathbb{R}^{n}_{+}\), first established by Escobar \cite{es} (see also Carlen--Loss \cite{Carlen}), states that for any real-valued function \(f\) on \(\mathbb{R}^{n}_{+}\), sufficiently smooth up to the boundary and decaying rapidly enough at infinity, the following holds:
\begin{align}\label{1.1}
\left( \int_{\mathbb{R}^{n-1}} |(\tau f)(x)|^{\frac{2(n-1)}{n-2}} dx\right)^{\frac{n-2}{n-1}}
\leq C_n \int_{\mathbb{R}^{n}_{+}} |\nabla f(x,t)|^2 dx dt,
\end{align}
where
\[
C_n = \frac{1}{\sqrt{\pi}(n-2)}
\Bigl( \frac{\Gamma(n-1)}{\Gamma\bigl(\frac{n-1}{2}\bigr)} \Bigr)^{\frac{1}{n-1}},
\]
and \((\tau f)(x)\) denotes the trace of \(f\) on \(\partial\mathbb{R}^{n}_{+}\).
The equality in \eqref{1.1} holds if and only if \(f\) is a multiple of
\[
\Bigl( \frac{1}{(\varepsilon + t)^2 + |x - x_0|^2} \Bigr)^{(n-2)/2}, \qquad
\varepsilon > 0, \; x_0 \in \mathbb{R}^{n-1}.
\]

We remark that, via the seminal extension problem of Caffarelli and Silvestre \cite{ca}, sharp Sobolev trace inequalities on \(\mathbb{R}^{n+1}_{+}\) can be established for \(\gamma \in (0,1)\) (see Carlen--Loss \cite{Carlen} for \(\gamma = \frac{1}{2}\)).
In \cite{Case2}, Case extended the framework of Caffarelli and Silvestre by introducing a family of higher-order boundary operators, yielding a corresponding family of sharp Sobolev trace inequalities on \(\mathbb{R}^{n+1}_{+}\).
Building on the work of Case et al. (who generalized these results to a conformally invariant class of manifolds and introduced the relevant conformally covariant boundary operators for the Paneitz and sixth-order GJMS cases \cite{Case0,Case1,Case&Luo}),   the extension to Poincar\'e-Einstein manifolds was later established by Flynn, Lu, and the second author \cite{Flynn&Lu&Yang}.
Analogous results in the CR setting have been developed by Frank et al. \cite{fr1} and by Flynn, Lu, and the second author \cite{Flynn&Lu&Yang1}.
For more information on Sobolev trace inequalities, we refer to \cite{Ache&Chang,be2,Case&Chang,Chen&Zhang,Gong,yang} and the references therein.

In their work \cite{ei}, Einav and Loss investigated sharp trace inequalities for fractional Sobolev spaces. We briefly summarize the  results as follows.
Define the space \( D_s(\mathbb{R}^n) \) to be the space of all functions $f \in L^2(\mathbb{R}^n)$ whose Fourier transform $\widehat{f}$ satisfies
\[
\int_{\mathbb{R}^n} |\widehat{f}(y)|^2  |y|^{s}dy < \infty .
\]
For \( f \in D_s(\mathbb{R}^n) \), let $$ \mathbb{R}^{n-m} = \{(x_1, \dots, x_{n-m}, 0, 0, \dots, 0) : x_j \in \mathbb{R}, 1 \leq j \leq n-m\} $$
 and let \( \tau_m \) denote the restriction operator that maps functions on \(\mathbb{R}^n\) to \(\mathbb{R}^{n-m}\).
Einav and Loss established the following sharp Sobolev trace inequality for fractional Sobolev spaces.
\begin{theorem}(\cite[Theorem 1.1]{ei}) \label{th1}
Let $0 \leq m < n$ and $\frac{m}{2} < s < \frac{n}{2}$. For any $f \in D_{2s}(\mathbb{R}^{n})$ we have
\begin{equation}
\label{eq:trace-ineq}
\|\tau_m f\|_{L^{\frac{2(n-m)}{n-2s}}}^2 \leq C_{m,s,n} \int_{\mathbb{R}^{n}}|(-\Delta)^{s/2}f|^{2},
\end{equation}
where
\[
C_{m,s,n} = 2^{-2s} \pi^{-s} \cdot \frac{\Gamma(n/2 - s) \Gamma(s - m/2)}{\Gamma(s) \Gamma(n/2 + s - m)} \left\{ \frac{\Gamma(n - m)}{\Gamma((n - m)/2)} \right\}^{(2s - m)/(n - m)}.
\]
There is equality only if $f(x)$ is proportional to
\begin{equation*}
\int_{\mathbb{R}^m} \frac{1}{\left(|x'|^2 + |x'' - y''|^2\right)^{(n-2s)/2}} \frac{1}{\left(\gamma^2 + |y'' - a|^2\right)^{(n+2s-2m)/2}} dy''
\end{equation*}
for some $a \in \mathbb{R}^{n-m}$ and $\gamma \neq 0$.
\end{theorem}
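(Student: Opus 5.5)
We prove the Einav–Loss inequality by the duality argument of Bez–Machihara–Sugimoto combined with the sharp Hardy–Littlewood–Sobolev (HLS) inequality of Lieb on $\mathbb{R}^{n-m}$. Write $x=(x'',x')$ with $x''\in\mathbb{R}^{n-m}$ and $x'\in\mathbb{R}^m$. The trace inequality is equivalent to the boundedness of $\tau_m(-\Delta)^{-s/2}:L^2(\mathbb{R}^n)\to L^{q}(\mathbb{R}^{n-m})$, where $q=\frac{2(n-m)}{n-2s}$. By duality this is equivalent to boundedness of the adjoint $T^*=(-\Delta)^{-s/2}\tau_m^*$ from $L^{q'}(\mathbb{R}^{n-m})$ to $L^2(\mathbb{R}^n)$, where $\tau_m^* g=g(x'')\otimes\delta_0(x')$. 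Moreover, the sharp constant satisfies
\[
\|T\|^2=\|T^*\|^2=\sup_g \frac{\|T^*g\|_{L^2(\mathbb{R}^n)}^2}{\|g\|_{L^{q'}}^2}.
\]

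The first step is to compute $\|T^*g\|_2^2=\langle (-\Delta)^{-s}(g\otimes\delta),\,g\otimes\delta\rangle$. The Riesz potential kernel of $(-\Delta)^{-s}$ on $\mathbb{R}^n$ is $c_{n,s}|x-y|^{-(n-2s)}$ with
\[
c_{n,s}=\frac{\Gamma(n/2-s)}{4^s\pi^{n/2}\Gamma(s)}.
\]
Restricting both points to $x'=y'=0$ gives
\[
\|T^*g\|_2^2=c_{n,s}\iint_{\mathbb{R}^{n-m}\times\mathbb{R}^{n-m}}\frac{g(x'')g(y'')}{|x''-y''|^{(n-m)-\lambda'}}\,dx''\,dy'',
\]
with $(n-m)-\lambda'=n-2s$, that is, $\lambda'=2s-m\in(0,n-m)$. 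This is exactly where the hypotheses $m/2<s<n/2$ are used. The computation can be justified on the Fourier side: integrating $|\xi|^{-2s}$ over $\xi'\in\mathbb{R}^m$ gives $C|\xi''|^{-(2s-m)}$, which converges precisely because $2s>m$ and $s<n/2$. This route yields the constant with the Gamma factors $\Gamma(s-m/2)$ and $\Gamma(s)$.

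The second step applies Lieb's sharp HLS inequality in dimension $N=n-m$ with kernel $|x-y|^{-\mu}$, where $\mu=n-2s$, and exponent $p=\frac{2N}{2N-\mu}$. One checks that $p=q'$. HLS gives
\[
\iint \frac{g(x)g(y)}{|x-y|^{\mu}}\le \pi^{\mu/2}\frac{\Gamma(N/2-\mu/2)}{\Gamma(N-\mu/2)}\Bigl(\frac{\Gamma(N)}{\Gamma(N/2)}\Bigr)^{1-\mu/N}\|g\|_p^2 .
\]
Substituting $N-\mu/2=n/2+s-m$, $N/2-\mu/2=s-m/2$ and $1-\mu/N=(2s-m)/(n-m)$, then multiplying by $c_{n,s}$ and simplifying the powers of $\pi$ and $2$, should reproduce $C_{m,s,n}$. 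This bookkeeping is the routine but error-prone part; I would double-check it against the Fourier-side constant.

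The main obstacle is the equality characterization and the rigorous duality for $f\in D_{2s}$. Equality in the primal inequality forces $f=(-\Delta)^{-s/2}F$, where $F$ is the dual extremal direction, so $(-\Delta)^{s/2}f$ is proportional to $T^*g$ with $g$ a HLS optimizer, i.e.\ $g(y'')=(\gamma^2+|y''-a|^2)^{-(2N-\mu)/2}$. Note that $2N-\mu=n+2s-2m$. Then
\[
f=(-\Delta)^{-s/2}T^*g=(-\Delta)^{-s}(g\otimes\delta),
\]
which is the Riesz potential $\int|x'|^2+|x''-y''|^2)^{-(n-2s)/2}g(y'')\,dy''$ as in the statement. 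To make this rigorous I would first prove the inequality for Schwartz $f$ and extend by density. For equality I would use that the supremum in the duality is attained exactly when $\tau_m f$ is a multiple of $|g|^{q'-1}$-dual to an optimizer, and then invoke Lieb's uniqueness of HLS optimizers.
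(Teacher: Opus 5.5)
Your proof is correct, and the bookkeeping you were unsure about does close. Indeed $c_{n,s}\,\pi^{(n-2s)/2}=2^{-2s}\pi^{-s}\Gamma(\frac n2-s)/\Gamma(s)$, and Lieb's HLS constant with $N=n-m$, $\mu=n-2s$ supplies exactly the remaining factors $\Gamma(s-\frac m2)/\Gamma(\frac n2+s-m)$ and $\{\Gamma(n-m)/\Gamma(\frac{n-m}{2})\}^{(2s-m)/(n-m)}$ of $C_{m,s,n}$. Your integral over $y''\in\mathbb{R}^{n-m}$ is also the right reading of the statement's $\int_{\mathbb{R}^m}$.

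The paper does not reprove this theorem; it quotes it from Einav--Loss. Its Section 4 (Lemmas~\ref{lm5.1} and \ref{lm5.2}, read with $s$ replaced by $2s$), together with the sharp fractional Sobolev inequality on $\mathbb{R}^{n-m}$, does prove it, but it places the duality differently from you.

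There the operator is $\widetilde{\mathcal{S}}_m=(-\Delta_{\mathbb{R}^{n-m}})^{\frac{2s-m}{4}}\tau_m(-\Delta_{\mathbb{R}^n})^{-\frac s2}$, acting $L^2\to L^2$. The kernel of $\tau_m(-\Delta_{\mathbb{R}^n})^{-s}\tau_m^*$ is $c_{n,s}|x''-y''|^{-(n-2s)}$, a multiple of the kernel of $(-\Delta_{\mathbb{R}^{n-m}})^{-\frac{2s-m}{2}}$. Hence $\widetilde{\mathcal{S}}_m\widetilde{\mathcal{S}}_m^*=\frac{\gamma_{n-m}(2s-m)}{\gamma_n(2s)}\,\mathrm{Id}$. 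This yields an exact Pythagorean identity: $\|(-\Delta)^{s/2}f\|_2^2$ equals the energy of $f-\mathcal{Q}_{2s,m}(\tau_m f)$ plus $\frac{\gamma_n(2s)}{\gamma_{n-m}(2s-m)}\|(-\Delta_{\mathbb{R}^{n-m}})^{\frac{2s-m}{4}}\tau_m f\|_2^2$. Sharp Sobolev on $\mathbb{R}^{n-m}$ then finishes.

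You instead apply $TT^*$ to $T=\tau_m(-\Delta)^{-s/2}:L^2\to L^q$ and invoke sharp HLS directly. Since sharp Sobolev on $\mathbb{R}^{n-m}$ is the dual form of sharp HLS, both proofs rest on the same Riesz-kernel computation; yours fuses the two steps. Your route is shorter and gives the extremals directly in the Einav--Loss form $(-\Delta)^{-s}(g\otimes\delta)$ with $g$ an HLS optimizer.

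The paper's route buys more. It gives the sharp $\dot H^{s-m/2}(\mathbb{R}^{n-m})$ trace identity with an explicit remainder, and identifies the minimal-energy extension $\mathcal{Q}_{2s,m}$. It makes the equality cases immediate, since the remainder must vanish. And it survives, after conformal transfer to $\mathbb{S}^n$, passage to the endpoint $2s=n$, where the Sobolev step is replaced by Beckner--Onofri and your $L^q$/HLS argument would need a logarithmic substitute.

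One tightening for your equality step: test the duality against $g=|\tau_m f|^{q-2}\tau_m f$. Equality then forces equality in Cauchy--Schwarz, i.e.\ $(-\Delta)^{s/2}f\parallel T^*g$, and equality in HLS for $g$. Lieb's classification of HLS optimizers then gives the stated form.
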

In particular,  when \( s = 2 \) and \( m = 1 \), the inequality \eqref{eq:trace-ineq} recovers the earlier result of Escobar.

Moreover,   Beckner \cite{be2} and Bez-Machihara-Sugimoto \cite{bez} established another type of Sobolev trace inequality for fractional Sobolev spaces.
Unlike Beckner's approach, Bez--Machihara-Sugimoto employed a dual  argument. Specifically, they proved the following theorem:

\begin{theorem}(\cite[Theorem 5]{be2}, \cite[Theorem 4.1]{bez}) \label{th2}
Let $n \geq 2$ and  $s \in \left(\frac{1}{2}, \frac{n}{2}\right)$. For $f \in D_{2s}(\mathbb{R}^n)$,
\begin{equation}
\label{eq:thm4.1}
\|\mathcal{R}f\|_{L^{\frac{2(n-1)}{n-2s}}(\mathbb{S}^{n-1})}^2 \leq 2^{1-2s} \frac{\Gamma(2s-1)\Gamma\left(\frac{n}{2}-s\right)}{\Gamma(s)^2\Gamma\left(\frac{n}{2}-1+s\right)} \left( \frac{\Gamma\left(\frac{n}{2}\right)}{2\pi^{\frac{n}{2}}} \right)^{\frac{2s-1}{n-1}} \int_{\mathbb{R}^{n}}|(-\Delta)^{s/2}f|^{2}.
\end{equation}
Here $\mathcal{R}$ denotes  the operator which restricts  $f$ on  $\mathbb{R}^n$ to the unit sphere $\mathbb{S}^{n-1}\subset \mathbb{R}^n$.
\end{theorem}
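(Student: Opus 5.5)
We argue by duality, in the spirit of Bez--Machihara--Sugimoto. Set $p=\frac{2(n-1)}{n-2s}$ and let $p'$ be its conjugate exponent. Write $f=(-\Delta)^{-s/2}h$ with $h\in L^2(\mathbb{R}^n)$, so that the right-hand side of \eqref{eq:thm4.1} becomes $\|h\|_{L^2}^2$. The inequality is then equivalent to the boundedness of $T=\mathcal{R}(-\Delta)^{-s/2}:L^2(\mathbb{R}^n)\to L^p(\mathbb{S}^{n-1})$. By the standard $TT^*$ argument this is in turn equivalent to
\[
\langle TT^*g,g\rangle_{L^2(\mathbb{S}^{n-1})}\le C\|g\|_{L^{p'}(\mathbb{S}^{n-1})}^2 ,
\]
with the same constant $C$.

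The next step is to compute $TT^*$. For a function $g$ on the sphere, $T^*g=(-\Delta)^{-s/2}(g\,d\sigma)$, hence $TT^*g=\mathcal{R}\,(-\Delta)^{-s}(g\,d\sigma)$. Since $2s<n$, the operator $(-\Delta)^{-s}$ has the Riesz kernel
\[
c_{n,s}|x-y|^{-(n-2s)},\qquad c_{n,s}=\frac{\Gamma(\frac n2-s)}{2^{2s}\pi^{n/2}\Gamma(s)} .
\]
Therefore
\[
\langle TT^*g,g\rangle=c_{n,s}\iint_{\mathbb{S}^{n-1}\times\mathbb{S}^{n-1}}\frac{g(\xi)g(\eta)}{|\xi-\eta|^{\lambda}}\,d\sigma(\xi)\,d\sigma(\eta),\qquad \lambda=n-2s .
\]
The condition $s>\frac12$ gives $\lambda<n-1=\dim\mathbb{S}^{n-1}$, so this kernel is locally integrable on the sphere. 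Moreover $p'=\frac{2(n-1)}{2(n-1)-\lambda}$, which is exactly the diagonal Hardy--Littlewood--Sobolev exponent on $\mathbb{S}^{n-1}$.

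We then invoke Lieb's sharp HLS inequality on the sphere, obtained from the Euclidean one via stereographic projection. Its extremizers are constants (up to conformal maps), so the sharp constant is attained at $g\equiv1$:
\[
\iint\frac{g(\xi)g(\eta)}{|\xi-\eta|^{\lambda}}\le \frac{\iint|\xi-\eta|^{-\lambda}}{|\mathbb{S}^{n-1}|^{2/p'}}\,\|g\|_{p'}^2 .
\]

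The remaining work is explicit evaluation of the constant, which is the main computational obstacle. The double integral $|\mathbb{S}^{n-1}|\int_{\mathbb{S}^{n-1}}|e_1-\eta|^{-\lambda}\,d\sigma(\eta)$ reduces to a Beta integral. Using $|e_1-\eta|^2=2(1-t)$,
\[
\int_{\mathbb{S}^{n-1}}|e_1-\eta|^{-\lambda}d\sigma=|\mathbb{S}^{n-2}|\int_{-1}^1(2-2t)^{-\lambda/2}(1-t^2)^{\frac{n-3}{2}}dt=2^{n-2-\lambda}|\mathbb{S}^{n-2}|\,B\!\left(\tfrac{n-1-\lambda}{2},\tfrac{n-1}{2}\right).
\]
Here $\frac{n-1-\lambda}{2}=s-\frac12$. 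We combine this with $|\mathbb{S}^{n-1}|=2\pi^{n/2}/\Gamma(\frac n2)$, the factor $c_{n,s}$, and the power $|\mathbb{S}^{n-1}|^{1-2/p'}=|\mathbb{S}^{n-1}|^{-(2s-1)/(n-1)}$, which produces the factor $\bigl(\Gamma(\frac n2)/2\pi^{n/2}\bigr)^{\frac{2s-1}{n-1}}$. Finally, the duplication formula turns $\Gamma(s-\frac12)/\Gamma(s)$ into $\Gamma(2s-1)/\Gamma(s)^2$ up to powers of $2$ and $\sqrt\pi$, which should yield the stated constant $2^{1-2s}\frac{\Gamma(2s-1)\Gamma(\frac n2-s)}{\Gamma(s)^2\Gamma(\frac n2-1+s)}$. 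Tracking these powers of $2$ and $\pi$ carefully is where errors are likely, and I would check the result against the case $n=3$, $s=1$.
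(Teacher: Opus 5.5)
Your argument is correct, and the constant comes out exactly. With $\lambda=n-2s$ one finds $\iint_{\mathbb{S}^{n-1}\times\mathbb{S}^{n-1}}|\xi-\eta|^{-\lambda}\,d\sigma\,d\sigma=|\mathbb{S}^{n-1}|\,2^{2s-1}\pi^{\frac{n-1}{2}}\Gamma(s-\frac12)/\Gamma(s+\frac n2-1)$. Hence $c_{n,s}$ times the sharp HLS constant equals $\frac{\Gamma(\frac n2-s)\Gamma(s-\frac12)}{2\sqrt{\pi}\,\Gamma(s)\Gamma(\frac n2-1+s)}\bigl(\frac{\Gamma(\frac n2)}{2\pi^{n/2}}\bigr)^{\frac{2s-1}{n-1}}$. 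The duplication formula $\Gamma(s-\frac12)\Gamma(s)=2^{2-2s}\sqrt{\pi}\,\Gamma(2s-1)$ turns this into the stated constant. For $n=3$, $s=1$ it gives $1/(2\sqrt{\pi})$, half of Escobar's half-space constant in \eqref{1.1}, as expected for a two-sided trace. What you leave implicit is routine: prove the bound for $T^*$ on smooth $g$, then define $\mathcal{R}f$ on $D_{2s}(\mathbb{R}^n)$ by continuity.

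The paper does not prove this theorem itself. It cites it and remarks that, via stereographic projection, it is equivalent to the case $m=1$ of Einav--Loss, as can be read off from the method used for its own results. That method is organized differently from yours.

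Instead of computing $TT^*$ as a map $L^{p'}\to L^{p}$, it composes the restriction with the square root of the lower-dimensional conformally covariant operator. In the present setting this is $\mathcal{S}=P_{2s-1,\mathbb{S}^{n-1}}^{1/2}\,\mathcal{R}\,(-\Delta)^{-s/2}$. It then uses that $\mathcal{S}\mathcal{S}^*$ is a scalar multiple of the identity, because the restricted Riesz kernel $|\xi-\eta|^{-(n-2s)}$ is a constant multiple of the Green function of $P_{2s-1,\mathbb{S}^{n-1}}$. This yields an exact Pythagorean identity, $\|(-\Delta)^{s/2}f\|_2^2=\|(-\Delta)^{s/2}(f-\mathcal{Q}\mathcal{R}f)\|_2^2+c\,\|P_{2s-1,\mathbb{S}^{n-1}}^{1/2}\mathcal{R}f\|_2^2$. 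This is Lemma \ref{lm5.2} with $m=1$ and $s$ replaced by $2s$, transported by stereographic projection and \eqref{5.6}. The sharp Sobolev inequality on $\mathbb{S}^{n-1}$ then finishes.

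Your route is the dual, one-step form of the same mechanism. Lieb's sharp HLS inequality with kernel $|\xi-\eta|^{-(n-2s)}$ on $\mathbb{S}^{n-1}$ is precisely the dual of the sharp Sobolev inequality for $P_{2s-1,\mathbb{S}^{n-1}}$. So you avoid the intermediate Hilbert-space identity, the intertwining formulas, and the transfer from a hyperplane to a sphere; this makes your route shorter.

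The paper's route buys extra structure:
\begin{itemize}
\item a sharp trace estimate into the conformally invariant space, with an explicit remainder term;
\item a direct description of the equality cases, namely $f=\mathcal{Q}(\mathcal{R}f)$ with $\mathcal{R}f$ a Sobolev extremizer;
\item an identity that can be differentiated in $s$, which is how the endpoint Beckner--Onofri trace inequalities (Theorems \ref{th1.7} and \ref{th1.8}) are obtained.
\end{itemize}
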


By stereographic projection, inequality \eqref{eq:thm4.1} is in fact equivalent to the case \( m = 1 \) of inequality \eqref{eq:trace-ineq}. This equivalence can be observed from our subsequent proof in the case of  Heisenberg group.

The purpose of this paper is to extend Theorems \ref{th1} and \ref{th2} to the settings of the Heisenberg group and the CR sphere.
To state our main results, we first introduce some necessary notations.
Recall that the Heisenberg group \(\mathbb{H}^n\) is \(\mathbb{C}^n \times \mathbb{R}\) with elements \(u = (z,t)\) and group law
\[
uu' = (z,t)(w,\mu) = \bigl(z + w, t + \mu + 2\operatorname{Im}(z \cdot \overline{w})\bigr),
\]
where
\[
z \cdot \overline{w} = \sum_{j=1}^{n} z_j \overline{w}_j.
\]
Here \(\operatorname{Im} z\) and \(\operatorname{Re} z\) denote, respectively, the imaginary and real parts of \(z\).
The Haar measure on \(\mathbb{H}^n\) is the usual Lebesgue measure \(du = dzdt\).
We denote the convolution on the Heisenberg group by
\[
f * g(u) = \int_{\mathbb{H}^n} f(v)g(v^{-1}u) dv.
\]
The homogeneous norm on \(\mathbb{H}^n\) is given by
\[
|u| = |(z,t)| = \bigl(|z|^4 + t^2\bigr)^{1/4},
\]
and dilations of a point \(u = (z,t)\) are defined as
\[
\delta_r(u) = (r z, r^2 t),\qquad r > 0.
\]
 Denote by $Q_n$ the homogeneous dimension of \(\mathbb{H}^n\).  Then
$$Q_n = 2n+2.$$
The sublaplacian on \(\mathbb{H}^n\) is the second-order differential operator
\[
\mathcal{L}_0 = -\frac{1}{4} \sum_{j=1}^n \bigl(X_j^2 + Y_j^2\bigr),
\]
where
\[
X_j = \frac{\partial}{\partial x_j} + 2y_j \frac{\partial}{\partial t},\qquad
Y_j = \frac{\partial}{\partial y_j} - 2x_j \frac{\partial}{\partial t},\quad j=1,\cdots,n.
\]

The conformally invariant fractional powers of the sublaplacian on \(\mathbb{H}^{n}\) are defined by (see \cite{Co,ron1})
\[
\mathcal{L}_{s,\mathbb{H}^{n}} \;:=\; |2T|^{s/2}
\frac{\Gamma\left(\mathcal{L}_0 |2T|^{-1} + \frac{2+s}{4}\right)}
{\Gamma\left(\mathcal{L}_0 |2T|^{-1} + \frac{2-s}{4}\right)},
\]
where \(T = \frac{\partial}{\partial t}\).
The operator \(\mathcal{L}_{s,\mathbb{H}^{n}}\) is left-invariant and satisfies the homogeneity property
\begin{align}\label{1.4}
\mathcal{L}_{s,\mathbb{H}^{n}}\bigl(f \circ \delta_r\bigr)
= r^{s}\bigl(\mathcal{L}_{s,\mathbb{H}^{n}} f\bigr)\circ \delta_r,\qquad
f \in C_{0}^{\infty}(\mathbb{H}^{n}).
\end{align}
The Sobolev space \(W^{s,2}(\mathbb{H}^{n})\) is defined as the completion of \(C_{0}^{\infty}(\mathbb{H}^{n})\) with respect to the norm
\[
\|f\|_{W^{s,2}(\mathbb{H}^{n})} \;:=\;
\Bigl(\int_{\mathbb{H}^{n}} \bigl|\mathcal{L}_{s,\mathbb{H}^{n}}^{1/2}f\bigr|^{2}\Bigr)^{1/2}.
\]

Let
\[
\mathbb{S}^{2n+1} = \Bigl\{ z=(\zeta_{1},\dots,\zeta_{n+1}) \in \mathbb{C}^{n+1} \;:\; \sum_{j=1}^{n+1} |\zeta_{j}|^{2}=1 \Bigr\}
\]
be the CR sphere. The sublaplacian on \(\mathbb{S}^{2n+1}\) is defined as
\[
\mathcal{L} = -\frac{1}{2} \sum_{j=1}^{n+1} \bigl( T_j \overline{T}_j + \overline{T}_j T_j \bigr),
\]
where
\[
T_j = \frac{\partial}{\partial \zeta_j} - \overline{\zeta}_j \sum_{k=1}^{n+1} \zeta_k \frac{\partial}{\partial \zeta_k}, \quad j=1,\cdots,n+1.
\]
The transversal direction is given by the real vector field
\[
\mathcal{T} = \frac{i}{2} \sum_{j=1}^{n+1} \Bigl( \zeta_j \frac{\partial}{\partial \zeta_j} - \overline{\zeta}_j \frac{\partial}{\partial \overline{\zeta}_j} \Bigr).
\]
The conformal sublaplacian on \(\mathbb{S}^{2n+1}\) is defined as
\[
\mathcal{D} = \mathcal{L} + \frac{n^2}{4}.
\]
The Sobolev space \(W^{s,2}(\mathbb{S}^{2n+1})\) is defined as the completion of \(C^\infty(\mathbb{S}^{2n+1})\) with respect to the norm
\[
\|F\|_{W^{s,2}(\mathbb{S}^{2n+1})} = \|\mathcal{D}^{s/2}F\|_{L^2(\mathbb{S}^{2n+1})}.
\]

It is well known that \(L^2(\mathbb{S}^{2n+1})\) admits a decomposition into \(U(n+1)\)-irreducible components
\[
L^2(\mathbb{S}^{2n+1}) = \bigoplus_{j,k \geq 0} \mathcal{H}_{j,k},
\]
where \(\mathcal{H}_{j,k}\) consists of the restrictions to \(\mathbb{S}^{2n+1}\) of harmonic polynomials \(p(z,\overline{z})\) on \(\mathbb{C}^{n+1}\) that are homogeneous of degree \(j\) in \(z\) and degree \(k\) in \(\overline{z}\) (see \cite{fo,tha2} and references therein).
We define the operator \(\mathcal{A}_{s,\mathbb{S}^{2n+1}}\) on \(W^{s,2}(\mathbb{S}^{2n+1})\) for \(-Q_n < s \leq Q_n\) by its action on the spherical harmonics \(Y_{jk}\in\mathcal{H}_{j,k}\):
\begin{align}\label{1.5}
\mathcal{A}_{s,\mathbb{S}^{2n+1}} Y_{jk} = \lambda_j(s)\lambda_k(s)Y_{jk},
\end{align}
where
\[
\lambda_j(s) = \frac{\Gamma\bigl(\frac{Q_n+s}{4} + j\bigr)}{\Gamma\bigl(\frac{Q_n-s}{4} + j\bigr)}.
\]
In terms of the conformal sublaplacian \(\mathcal{D}\) and the transversal vector field \(\mathcal{T}\), the operator \(\mathcal{A}_{s,\mathbb{S}^{2n+1}}\) can be expressed as (see Section 2.2; for the case where \(s\) is an even integer, see   \cite{br1})
\begin{align}\label{As}
\mathcal{A}_{s,\mathbb{S}^{2n+1}}
= \frac{\Gamma\Bigl(\sqrt{\mathcal{D}-\mathcal{T}^{2}}-i\mathcal{T}+\frac{2+s}{4}\Bigr)
      \Gamma\Bigl(\sqrt{\mathcal{D}-\mathcal{T}^{2}}+i\mathcal{T}+\frac{2+s}{4}\Bigr)}
     {\Gamma\Bigl(\sqrt{\mathcal{D}-\mathcal{T}^{2}}-i\mathcal{T}+\frac{2-s}{4}\Bigr)
      \Gamma\Bigl(\sqrt{\mathcal{D}-\mathcal{T}^{2}}+i\mathcal{T}+\frac{2-s}{4}\Bigr)}.
\end{align}

Set
\[
\mathbb{H}^{n-m}=\{(z,t)\in \mathbb{H}^{n} : z=(z_{1},\dots, z_{n-m},0,\dots,0) \},
\]
considered as a subgroup of \(\mathbb{H}^n\) via the natural embedding.
For convenience, we write a point $z\in\mathbb{C}^{n}$ as \( z = (z',z'') \) with
\[
z' = (z_{1},\dots, z_{n-m}) \in \mathbb{C}^{n-m}, \qquad
z'' = (z_{n-m+1},\dots, z_{n}) \in \mathbb{C}^{m}.
\]
Let \(\mathcal{R}_{m}\) denote  the restriction operator that maps  functions on   \( \mathbb{H}^{n} \) to \(\mathbb{H}^{n-m}\).
As shown in Lemma \ref{lm4.2},  \(\mathcal{R}_{m}\) extends to a bounded operator from \(W^{s,2}(\mathbb{H}^{n})\) to \(W^{s-2m,2}(\mathbb{H}^{n-m})\).

Correspondingly, we set
\begin{align}\label{d1.7}
\mathbb{S}^{2(n-m)+1} = \left\{\zeta \in \mathbb{S}^{2n+1} : \zeta_{k}=0, \; n-m+2 \leq k \leq n+1 \right\}.
\end{align}
Let \(\widetilde{\mathcal{R}}_{m}\)  be the operator that restricts  functions on  \( \mathbb{S}^{2n+1} \) to \(\mathbb{S}^{2(n-m)+1}\).
 For brevity, and when no confusion is likely, we also identify
 \begin{align}\label{d1.8}
\mathbb{S}^{2(n-m)+1} =& \Bigl\{(\zeta, t) : \sum_{j=1}^{n-m+1}|\zeta_{j}|^{2}=1,\;  \zeta_{k}=0, \; n-m+2 \leq k \leq n,\; t=0\Bigr\}\\
\subset& \mathbb{C}^{n-m+1}\times \{0\}\times\cdots \times\{0\}\subset  \mathbb{H}^{n}. \nonumber
 \end{align}
With this identification,  the notation \(\widetilde{\mathcal{R}}_{m}\) will also be used for the restriction of  functions on \(  \mathbb{H}^{n} \) to \(\mathbb{S}^{2(n-m)+1}\).

Employing the method of Bez--Machihara--Sugimoto together with the sharp form of the Hardy--Littlewood--Sobolev inequality on the Heisenberg group due to Frank and Lieb, we obtain the following three types of Sobolev trace inequalities on \(\mathbb{H}^{n}\) and $\mathbb{S}^{2n+1}$.

\begin{theorem}\label{th1.3}
Let \( 1 \leq m < n \), \( 2m < s < Q_n \), and \( p = \frac{2(Q_{n} - 2m)}{Q_{n} - s} \). For any \( f \in W^{s,2}(\mathbb{H}^{n}) \), it holds
\begin{align*}
\int_{\mathbb{H}^{n}}|\mathcal{L}_{s,\mathbb{H}^{n}}^{1/2}f|^{2} \geq &
2^{\frac{s-2m}{Q_n-2m}}\frac{\pi^{m}\Gamma\left(\frac{s}{2}\right)}{\Gamma\left(\frac{s-2m}{2}\right)}
\left( \frac{2\pi^{n-m+1}}{(n-m)!} \right)^{\frac{s-2m}{Q_n-2m}}
\frac{\Gamma^{2}\bigl(\frac{Q_n + s - 4m}{4}\bigr)}{\Gamma^{2}\bigl(\frac{Q_n-s}{4}\bigr)}
\|\mathcal{R}_m f\|_{L^{p}(\mathbb{H}^{n-m})}^{2}.
\end{align*}
Equality holds if and only if
\begin{align*}
f(u) = f(z',z'',t) = c|z''|^{s-2m}
\int_{\mathbb{H}^{n-m}} |u^{-1}v|^{4m - Q_n - s}
\Bigl\{J_{\mathscr{C}_{n-m}}\bigl(\delta_r(a^{-1}v)\bigr)\Bigr\}^{\frac{Q_{n} - s}{2(Q_{n} - 2m)}}  dv,
\end{align*}
for some \( c \in \mathbb{C} \), \( r > 0 \), and \( a \in \mathbb{H}^{n-m} \).
\end{theorem}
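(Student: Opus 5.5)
We argue by duality, following Bez--Machihara--Sugimoto. The trace inequality with sharp constant $C$ in the form $\|\mathcal{R}_m f\|_{L^p(\mathbb{H}^{n-m})}^2\le C^{-1}\int_{\mathbb{H}^n}|\mathcal{L}_{s,\mathbb{H}^n}^{1/2}f|^2$ is equivalent to boundedness of the adjoint $\mathcal{R}_m^*:L^{p'}(\mathbb{H}^{n-m})\to W^{-s,2}(\mathbb{H}^n)$ with the same constant. Here $\mathcal{R}_m^*g=g\otimes\delta_{z''=0}$ is the measure $g(v)\,dv$ carried by $\mathbb{H}^{n-m}$. Thus it suffices to compute
\[
\langle \mathcal{L}_{s,\mathbb{H}^n}^{-1}\mathcal{R}_m^*g,\mathcal{R}_m^*g\rangle=\int_{\mathbb{H}^{n-m}}\int_{\mathbb{H}^{n-m}}\overline{g(u)}\,K_s(u^{-1}v)\,g(v)\,du\,dv ,
\]
where $K_s$ is the fundamental solution of $\mathcal{L}_{s,\mathbb{H}^n}$. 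By the Cowling--Haagerup / Branson--Fontana--Morpurgo formula, $K_s(u)=c_{n,s}|u|^{s-Q_n}$ with explicit $c_{n,s}=\frac{2^{(s-6)/2}\Gamma^2((Q_n-s)/4)}{\pi^{n+1}\Gamma(s/2)}$ (up to the paper's normalization). Restricted to $\mathbb{H}^{n-m}$, where $|u^{-1}v|$ is the homogeneous norm of $\mathbb{H}^{n-m}$, this is $c_{n,s}|u^{-1}v|^{-(Q_{n-m}-\lambda)}$ with $Q_{n-m}=Q_n-2m$ and $\lambda=s-2m\in(0,Q_{n-m})$. This is exactly the Hardy--Littlewood--Sobolev kernel on $\mathbb{H}^{n-m}$.

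Next we apply the Frank--Lieb sharp HLS inequality on $\mathbb{H}^{n-m}$ with kernel exponent $Q_{n-m}-\lambda=Q_n-s$ and $p'=\frac{2Q_{n-m}}{Q_{n-m}+\lambda}$, which matches the dual of $p=\frac{2(Q_n-2m)}{Q_n-s}$. Frank--Lieb give
\[
\Bigl|\iint \overline{g(u)}|u^{-1}v|^{-(Q_{n-m}-\lambda)}g(v)\Bigr|\le \Bigl(\frac{2\pi^{n-m+1}}{(n-m)!}\Bigr)^{\lambda/Q_{n-m}}\frac{(n-m)!\,\Gamma(\lambda/2)}{\Gamma^2((Q_{n-m}+\lambda)/4)}\|g\|_{p'}^2 ,
\]
with equality iff $g$ is a multiple of $J_{\mathscr{C}_{n-m}}(\delta_r(a^{-1}\cdot))^{(Q_{n-m}+\lambda)/(2Q_{n-m})}$. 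Since $(Q_{n-m}+\lambda)/(2Q_{n-m})$ is the exponent $\frac{Q_n+s-4m}{2(Q_n-2m)}$, I expect this to match the stated form after rewriting; the equality case's exponent on $J$ should then be checked against the theorem. Multiplying by $c_{n,s}$ gives the dual constant, and inverting it should produce the stated constant. The factor $\Gamma(s/2)/\Gamma((s-2m)/2)$ arises from $c_{n,s}$ together with $\Gamma(\lambda/2)$, the factor $\Gamma^2((Q_n+s-4m)/4)/\Gamma^2((Q_n-s)/4)$ from the two Gamma squares, and $\pi^m$ from $\pi^{n+1}$ versus $\pi^{n-m+1}$.

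Equality cases are handled by duality. The extremal $f$ is $\mathcal{L}_{s}^{-1}\mathcal{R}_m^*g$ for an HLS optimizer $g$, that is, $f(u)=c\int_{\mathbb{H}^{n-m}}|u^{-1}v|^{s-Q_n}g(v)\,dv$. The form in the statement, with the factor $|z''|^{s-2m}$ and kernel exponent $4m-Q_n-s$, suggests the authors rewrite this via an inversion/Kelvin-type identity. Alternatively, the statement may intend a slightly different representation, and I would verify it by the explicit computation of the optimizer's convolution. This normalization is where I expect trouble.

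The main obstacle is rigor in the duality step. We must justify that $\mathcal{R}_m$ is bounded (Lemma \ref{lm4.2}), so that $\mathcal{R}_m^*g\in W^{-s,2}$ for $g\in L^{p'}$. The identity $\|\mathcal{R}_m^*g\|_{W^{-s,2}}^2=c_{n,s}\iint\bar g\,|u^{-1}v|^{s-Q_n}g$ also needs justification; this uses $s>2m$, so that the kernel is locally integrable on the subgroup. A density argument from $C_0^\infty$, using positive definiteness of the kernel, should suffice. Finally, we check that $\sup_g |\langle \mathcal{R}_m f,g\rangle|^2/\|\mathcal{R}_m^*g\|^2_{W^{-s,2}}$ yields both the sharp constant and, via Cauchy--Schwarz equality, the characterization of extremals.
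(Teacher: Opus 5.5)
Your $TT^*$ route works in principle, but it is not how the paper argues. As written, your constants do not produce the stated constant, and your extremals are not yet matched to the stated form.

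\textbf{Comparison.} The paper uses the same key fact you use: on $\mathbb{H}^{n-m}\times\mathbb{H}^{n-m}$ the kernel $a_{n,s}|u^{-1}v|^{s-Q_n}$ is $\frac{a_{n,s}}{a_{n-m,s-2m}}$ times the kernel of $\mathcal{L}^{-1}_{s-2m,\mathbb{H}^{n-m}}$. It packages this as $\mathcal{S}_m\mathcal{S}_m^*=\frac{\Gamma(\frac{s-2m}{2})}{\pi^m\Gamma(\frac s2)}\,\mathrm{Id}$ for $\mathcal{S}_m=\mathcal{L}^{1/2}_{s-2m,\mathbb{H}^{n-m}}\mathcal{R}_m\mathcal{L}^{-1/2}_{s,\mathbb{H}^{n}}$. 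This gives the exact identity of Lemma \ref{lm4.2}: the energy equals an explicit nonnegative remainder plus $\frac{\pi^m\Gamma(\frac s2)}{\Gamma(\frac{s-2m}{2})}\|\mathcal{L}^{1/2}_{s-2m,\mathbb{H}^{n-m}}\mathcal{R}_mf\|_2^2$. The sharp Sobolev inequality \eqref{Sobolev inequalities2} on $\mathbb{H}^{n-m}$ of order $s-2m$ then finishes. Equality means zero remainder and a Sobolev extremal trace, which is the stated form directly.

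That Sobolev inequality is dual to the HLS inequality you invoke, so the two routes are equivalent for Theorem \ref{th1.3}. Yours is shorter. The paper's version also yields the remainder term and the extension operator $\mathcal{P}_{s,m}$, which it transports to the CR sphere and differentiates at $s=Q_n$ for Theorem \ref{th1.7}.

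\textbf{Constants.} The kernel constant is $a_{n,s}=\frac{2^{n-1-s/2}\Gamma^{2}(\frac{Q_n-s}{4})}{\pi^{n+1}\Gamma(\frac{s}{2})}$, not $2^{(s-6)/2}(\cdots)$. In the Heisenberg form of Frank--Lieb (Theorem \ref{HLS2} transported by the Cayley transform), with $N=n-m$, $Q=Q_N$, and kernel $|u^{-1}v|^{-(Q-\lambda)}$, the sharp constant is
\[
C_{\mathrm{HLS}}=\Bigl(\frac{\pi^{N+1}}{2^{N-1}N!}\Bigr)^{\frac{Q-\lambda}{Q}}\frac{N!\,\Gamma(\frac{\lambda}{2})}{\Gamma^{2}(\frac{Q+\lambda}{4})}.
\]
Two things differ from your version. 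The power is the kernel exponent divided by $Q$, not $\lambda/Q$. The base carries the Cayley factor $2^{-N}$ relative to the sphere base. With your displayed values, $N!$ and $\pi^{N+1}$ do not cancel, so $\pi^m$ does not emerge as you claim, and $\frac{2\pi^{N+1}}{N!}$ ends up with exponent $-\lambda/Q$ instead of $+\lambda/Q$. With the correct values one gets
\[
a_{n,s}\,C_{\mathrm{HLS}}=\Bigl(\frac{4\pi^{N+1}}{N!}\Bigr)^{-\frac{s-2m}{Q_n-2m}}\frac{\Gamma(\frac{s-2m}{2})}{\pi^{m}\Gamma(\frac{s}{2})}\,\frac{\Gamma^{2}(\frac{Q_n-s}{4})}{\Gamma^{2}(\frac{Q_n+s-4m}{4})},
\]
whose reciprocal is exactly the constant of Theorem \ref{th1.3}.

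\textbf{Extremals.} Your description is correct: $f=c\int_{\mathbb{H}^{n-m}}|u^{-1}v|^{s-Q_n}g(v)\,dv$ with $g\propto J_{\mathscr{C}_{n-m}}(\delta_r(a^{-1}v))^{\frac{Q_n+s-4m}{2(Q_n-2m)}}$. It agrees with the stated form, but the link is not a Kelvin transform.

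By \eqref{2.7}, left invariance and \eqref{1.4}, $g$ is a constant multiple of $\mathcal{L}_{s-2m,\mathbb{H}^{n-m}}h$ with $h=J_{\mathscr{C}_{n-m}}(\delta_r(a^{-1}\cdot))^{\frac{Q_n-s}{2(Q_n-2m)}}$. Move $\mathcal{L}_{s-2m,\mathbb{H}^{n-m}}$ onto the kernel by symmetry. You then need $\mathcal{L}_{s-2m,\mathbb{H}^{n-m}}$ applied to $v\mapsto|u^{-1}v|^{s-Q_n}$. For fixed $u=(z',z'',t)$ this function is itself a bubble on $\mathbb{H}^{n-m}$, centred at $(z',t)$ with scale $|z''|$. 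So \eqref{2.7} applies again and gives \eqref{3.5}: a constant times $|z''|^{s-2m}|u^{-1}v|^{4m-Q_n-s}$. This is exactly the theorem's form, with the same $r$ and $a$.

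Your Cauchy--Schwarz and H\"older equality argument then gives both directions of the characterization.
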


\begin{theorem}\label{th1.4}
Let \( 1 \leq m < n \), \( 2m < s < Q_n \), and \( p = \frac{2(Q_{n} - 2m)}{Q_{n} - s} \).
Let $\mathbb{S}^{2(n-m)+1}$ be as defined in  (\ref{d1.7}).
 For any \( F \in W^{s,2}(\mathbb{S}^{2n+1}) \), it holds
\begin{align*}
\int_{\mathbb{S}^{2n+1}}|\mathcal{A}_{s,\mathbb{S}^{2n+1}}^{1/2}F|^{2} \geq &
\frac{\pi^{m}\Gamma\left(\frac{s}{2}\right)}{\Gamma\left(\frac{s-2m}{2}\right)}
\left( \frac{2\pi^{n-m+1}}{(n-m)!} \right)^{\frac{s-2m}{Q_n-2m}}
\frac{\Gamma^{2}\bigl(\frac{Q_n + s - 4m}{4}\bigr)}{\Gamma^{2}\bigl(\frac{Q_n-s}{4}\bigr)}
\|\widetilde{\mathcal{R}}_m F\|_{L^{p}(\mathbb{S}^{2(n-m)+1})}^{2}.
\end{align*}
Equality holds if and only if
\begin{align*}
F(\zeta) = c\left(\sum_{j=n-m+2}^{n+1}|\zeta_{j}|^{2}\right)^{\frac{s-2m}{2}}\int_{\mathbb{S}^{2(n-m)+1}}
\frac{1}
{|1 - \zeta\cdot\overline{\eta}|^{\frac{Q_n + s - 4m}{2}}}
\frac{1}{|1 - \overline{\xi} \cdot \eta|^{\frac{Q_{n-m} - s}{2}}}  d\eta,
\end{align*}
where \( c \in \mathbb{R} \), \( \xi \in \mathbb{C}^{n-m+1} \) with \( |\xi| < 1 \), and \( \zeta \in \mathbb{S}^{2(n-m)+1} \).
\end{theorem}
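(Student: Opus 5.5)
Theorem \ref{th1.4} is the CR-sphere form of Theorem \ref{th1.3}, so the plan is to work directly on $\mathbb{S}^{2n+1}$ with the Bez--Machihara--Sugimoto duality and then apply Frank--Lieb's sharp Hardy--Littlewood--Sobolev inequality on $\mathbb{S}^{2(n-m)+1}$. Let $p'=\frac{2(Q_n-2m)}{Q_n+s-4m}$ be the conjugate exponent of $p$. Then
\[
\|\widetilde{\mathcal R}_m F\|_{L^p(\mathbb{S}^{2(n-m)+1})}=\sup_{\|g\|_{p'}\le 1}\Bigl|\int_{\mathbb{S}^{2(n-m)+1}} F\,\overline{g}\,d\eta\Bigr| .
\]
Write $g\,d\eta=\widetilde{\mathcal R}_m^{*}g$ for the pairing against $F$; this is a measure on $\mathbb{S}^{2n+1}$ carried by the subsphere. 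By Cauchy--Schwarz in the $\mathcal{A}_s$-inner product,
\[
\Bigl|\int F\bar g\Bigr|^2\le \|\mathcal A_s^{1/2}F\|_2^2\,\langle \mathcal A_s^{-1}\widetilde{\mathcal R}_m^*g,\widetilde{\mathcal R}_m^*g\rangle .
\]
The whole task is therefore to compute the second factor sharply.

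\textbf{Step 1 (Green's function of $\mathcal A_s$).} Since $\mathcal A_{s}^{-1}=\mathcal A_{-s}$ on the spectral side, its kernel is a constant multiple of $|1-\zeta\cdot\bar\eta|^{-(Q_n-s)/2}$. This can be read off from the Funk--Hecke-type formula for $|1-\zeta\cdot\bar\eta|^{-\alpha}$ on $\mathcal H_{j,k}$ (eigenvalues $\propto\lambda_j(-s)\lambda_k(-s)$), as in Frank--Lieb. The constant is $c_{n,s}=\frac{2^{\dots}\Gamma^2((Q_n-s)/4)}{\pi^{n+1}\Gamma(s/2)}$, up to the normalization fixed earlier in Section~2.

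\textbf{Step 2 (reduction to HLS).} Restricting this kernel to $\zeta,\eta\in\mathbb{S}^{2(n-m)+1}$ leaves its form unchanged, with the exponent now read as $\lambda=\frac{Q_n-s}{2}=\frac{Q_{n-m}-(s-2m)}{2}$, which is the HLS exponent on the smaller sphere. Hence
\[
\langle\mathcal A_s^{-1}\widetilde{\mathcal R}_m^*g,\widetilde{\mathcal R}_m^*g\rangle=c_{n,s}\iint_{\mathbb{S}^{2(n-m)+1}} \frac{g(\zeta)\overline{g(\eta)}}{|1-\zeta\cdot\bar\eta|^{\lambda}}.
\]
Frank--Lieb's sharp HLS on $\mathbb{S}^{2(n-m)+1}$ bounds the right side by an explicit constant times $\|g\|_{p'}^2$. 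Extremals are exactly the constants composed with CR automorphisms, namely $|1-\bar\xi\cdot\eta|^{-(Q_{n-m}+(s-2m))/2}$. Combining Steps 1 and 2 gives the inequality. The remaining work is to check, via the duplication formula and $|\mathbb{S}^{2(n-m)+1}|=\frac{2\pi^{n-m+1}}{(n-m)!}$, that the product of $c_{n,s}$ and the Frank--Lieb constant is the reciprocal of the stated constant. In particular the quotient $\Gamma(s/2)/\Gamma((s-2m)/2)$ arises because the Frank--Lieb constant involves $\Gamma((s-2m)/2)$ on the smaller sphere while $c_{n,s}$ involves $\Gamma(s/2)$.

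\textbf{Step 3 (equality).} Equality in Cauchy--Schwarz forces $F=c\,\mathcal A_s^{-1}\widetilde{\mathcal R}_m^*g$ with $g$ an HLS optimizer, so $F(\zeta)=c\int |1-\zeta\cdot\bar\eta|^{-(Q_n-s)/2}g(\eta)\,d\eta$. I expect the main obstacle to be reconciling this with the stated formula, which carries the prefactor $(\sum_{j\ge n-m+2}|\zeta_j|^2)^{(s-2m)/2}$ and exponent $\frac{Q_n+s-4m}{2}$. The pairing $\int F\bar g$ only sees the trace, so the step I would try first is to exploit the freedom in representing the dual element. That is, use the identity on $\mathbb{S}^{2n+1}$ relating the Poisson-type kernel $|\zeta''|^{s-2m}|1-\zeta\cdot\bar\eta|^{-(Q_n+s-4m)/2}$ to $\mathcal A_s^{-1}$ applied to the subsphere measure; this is a Kelvin/inversion-type computation, equivalently a Cayley-transform computation on $\mathbb{H}^n$ as in Theorem \ref{th1.3}. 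Alternatively, I would transfer Theorem \ref{th1.3} itself via the Cayley transform, whose Jacobian accounts for the factor $2^{(s-2m)/(Q_n-2m)}$, and obtain both the constant and the extremals from the Heisenberg statement. This route is what I would pursue if the direct spherical identity proves unwieldy.
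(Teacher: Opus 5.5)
Your argument is correct, but it reaches the theorem by a more direct route than the paper.

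\textbf{Comparison with the paper.} The paper never uses the Green's function of $\mathcal{A}_{s,\mathbb{S}^{2n+1}}$ on the sphere. It first proves, on $\mathbb{H}^n$, the exact identity of Lemma \ref{lm4.2} (which comes from $\mathcal{S}_m\mathcal{S}_m^*=\mathrm{const}$). It then transports this by the Cayley transform to Lemma \ref{lm4.3},
\[
\|\mathcal{A}_{s}^{1/2}F\|_2^2=\|\mathcal{A}_{s}^{1/2}(F-\widetilde{\mathcal{P}}_{s,m}\widetilde{\mathcal{R}}_mF)\|_2^2+\frac{\pi^m\Gamma(\frac{s}{2})}{\Gamma(\frac{s-2m}{2})}\|\mathcal{A}_{s-2m}^{1/2}\widetilde{\mathcal{R}}_mF\|_{L^2(\mathbb{S}^{2(n-m)+1})}^2,
\]
and finally applies the sharp Sobolev inequality \eqref{Sobolev inequalities1} of order $s-2m$ on $\mathbb{S}^{2(n-m)+1}$. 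The extremals therefore come out as $\widetilde{\mathcal{P}}_{s,m}$ applied to Sobolev extremals, which is why they appear in Poisson-kernel form.

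You instead stay on the sphere. You observe that the restriction of $|1-\zeta\cdot\bar\eta|^{-(Q_n-s)/2}$ to the subsphere is exactly the Frank--Lieb HLS kernel of $\mathbb{S}^{2(n-m)+1}$ with $\lambda=Q_n-s$ (in the normalization of Theorem \ref{HLS2}). This lets you compute $\|\widetilde{\mathcal{R}}_m\mathcal{A}_s^{-1/2}\|^2$ in a single $TT^*$ step and bypass $\mathbb{H}^n$ entirely.

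Your constant does close. Testing on constants with $\int_{\mathbb{S}^{2n+1}}|1-\zeta\cdot\bar\eta|^{-(Q_n-s)/2}d\eta=2\pi^{n+1}\Gamma(\frac{s}{2})/\Gamma^2(\frac{Q_n+s}{4})$ gives $c_{n,s}=\Gamma^2(\frac{Q_n-s}{4})/(2\pi^{n+1}\Gamma(\frac{s}{2}))$. Then $c_{n,s}$ times the Frank--Lieb constant on $\mathbb{S}^{2(n-m)+1}$ is exactly the reciprocal of the stated constant; no duplication formula is needed.

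The paper's route yields an $L^2$-level identity with an explicit remainder and extension operator. It reuses this for Theorem \ref{th1.5} and differentiates it at $s=Q_n$ for Theorem \ref{th1.7}. Your route is more economical if the goal is Theorem \ref{th1.4} alone.

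\textbf{Closing the equality step.} The step you flag as the main obstacle is short. Set $Q'=Q_{n-m}$, $s'=s-2m$, and write $\zeta=(\zeta',\zeta'')$. On the subsphere, $\eta\mapsto|1-\zeta\cdot\bar\eta|^{-(Q_n-s)/2}$ equals $|1-\bar\xi\cdot\eta|^{-(Q'-s')/2}$ with $\xi=\zeta'$ and $1-|\xi|^2=|\zeta''|^2$. CR covariance of $\mathcal{A}_{s'}$, applied to $J_\tau^{(Q'-s')/(2Q')}$, then gives
\[
\mathcal{A}^{(\eta)}_{s-2m}|1-\zeta\cdot\bar\eta|^{-\frac{Q_n-s}{2}}=\frac{\Gamma^2(\frac{Q_n+s-4m}{4})}{\Gamma^2(\frac{Q_n-s}{4})}\,|\zeta''|^{s-2m}\,|1-\zeta\cdot\bar\eta|^{-\frac{Q_n+s-4m}{2}},
\]
which is the spherical analogue of \eqref{3.5}. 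By the same identity, your HLS extremal $g_0=|1-\bar\xi_0\cdot\eta|^{-(Q_n+s-4m)/2}$ is a multiple of $\mathcal{A}_{s-2m}|1-\bar\xi_0\cdot\eta|^{-(Q_n-s)/2}$. Moving $\mathcal{A}_{s-2m}$ onto the kernel by symmetry converts $F=c\,\mathcal{A}_s^{-1}\widetilde{\mathcal{R}}_m^*g_0$ into the stated form, with inner exponent $\frac{Q_n-s}{2}=\frac{Q_{n-m}-(s-2m)}{2}$. The $\frac{Q_{n-m}-s}{2}$ in the displayed statement of Theorem \ref{th1.4} is a misprint; the paper's own argument also produces $\frac{Q_n-s}{2}$.

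In the equality analysis, take $g$ to be the norming functional of $\widetilde{\mathcal{R}}_mF$. Then both Cauchy--Schwarz and HLS must hold with equality.
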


\begin{theorem}\label{th1.5}
Let \( 1 \leq m < n \), \( 2m < s < Q_n \), and \( p = \frac{2(Q_{n} - 2m)}{Q_{n} - s} \). Let $\mathbb{S}^{2(n-m)+1}$ be  as defined in  (\ref{d1.8}). For any \( f \in W^{s,2}(\mathbb{H}^{n}) \), it holds
\begin{align*}
\int_{\mathbb{H}^{n}}|\mathcal{L}_{s,\mathbb{H}^{n}}^{1/2}f|^{2} \geq &2
\frac{\pi^{m}\Gamma\left(\frac{s}{2}\right)}{\Gamma\left(\frac{s-2m}{2}\right)}
\left( \frac{2\pi^{n-m+1}}{(n-m)!} \right)^{\frac{s-2m}{Q_n-2m}}
\frac{\Gamma^{2}\bigl(\frac{Q_n + s - 4m}{4}\bigr)}{\Gamma^{2}\bigl(\frac{Q_n-s}{4}\bigr)}
\|\widetilde{\mathcal{R}}_m f\|_{L^{p}(\mathbb{S}^{2(n-m)+1})}^{2}.
\end{align*}
Equality holds if and only if
\begin{align*}
f(u) = f(z',z'',t) = &  c|z''|^{s-2m}
\int_{\mathbb{S}^{2(n-m)+1}} |u^{-1}v|^{4m - Q_n - s}
\frac{1}{|1 - \overline{\xi} \cdot v|^{(Q_{n-m} - s)/2}}  dv,
\end{align*}
where $c \in \mathbb{R},\; \xi \in \mathbb{C}^{n-m+1},\; |\xi| < 1$.
\end{theorem}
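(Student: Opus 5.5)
Theorem \ref{th1.5} follows from Theorem \ref{th1.3} by a change of variables inside $\mathbb{H}^{n-m}$: a Cayley-type transform $\mathcal{C}$ maps $\mathbb{S}^{2(n-m)+1}$, sitting inside $\mathbb{H}^{n-m}$ via the identification (\ref{d1.8}), onto a hyperplane-like copy of $\mathbb{H}^{n-m-1}$, as in the Euclidean case. An alternative is to rerun the Bez--Machihara--Sugimoto duality argument directly, with the restriction to $\mathbb{S}^{2(n-m)+1}\subset\mathbb{H}^n$. I would follow the duality route, since it gives the constant and the extremals at once.

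\textbf{Dual formulation.} By $L^2$ duality, the inequality is equivalent to a bound for the adjoint $\widetilde{\mathcal{R}}_m^{*}$ acting on $g\in L^{p'}(\mathbb{S}^{2(n-m)+1},d\sigma)$:
$$\|\mathcal{L}_{s,\mathbb{H}^n}^{-1/2}\widetilde{\mathcal{R}}_m^{*}g\|_{L^2(\mathbb{H}^n)}^2=\iint_{\mathbb{S}^{2(n-m)+1}\times\mathbb{S}^{2(n-m)+1}} g(v)\,K_{s}(v^{-1}w)\,\overline{g(w)}\,d\sigma(v)d\sigma(w),$$
where $K_s=c_{n,s}|u|^{s-Q_n}$ is the fundamental solution of $\mathcal{L}_{s,\mathbb{H}^n}$. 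Its constant is the Cowling--Haagerup/Frank--Lieb one, $c_{n,s}$ proportional to $\Gamma^2(\frac{Q_n-s}{4})/\Gamma(s/2)$ times powers of $2$ and $\pi$.

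\textbf{Key observation.} For $v,w$ in the identified sphere, written as $(\zeta,0)$ with $|\zeta|=1$, the Heisenberg distance $|v^{-1}w|^4=|\zeta-\eta|^4+4(\operatorname{Im}\zeta\cdot\overline\eta)^2$ should equal $4|1-\zeta\cdot\overline\eta|^2$. This holds because $|\zeta-\eta|^2=2\operatorname{Re}(1-\zeta\cdot\bar\eta)$. Hence the kernel becomes $c\,|1-\zeta\cdot\bar\eta|^{-(Q_{n-m}-s')/2}$ on $\mathbb{S}^{2(n-m)+1}$, with $Q_{n-m}-s'=Q_n-s$, i.e.\ $s'=s-2m$.

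\textbf{HLS step.} Apply the Frank--Lieb sharp HLS inequality on $\mathbb{S}^{2(n-m)+1}$ with exponent $\lambda=Q_n-s\in(0,Q_{n-m})$ (this uses $s>2m$) and $p'=2(Q_n-2m)/(Q_n+s-4m)$. The resulting constant, combined with $c_{n,s}$ and the factor $2^{-(Q_n-s)/2}$ coming from $|v^{-1}w|=\sqrt2|1-\zeta\bar\eta|^{1/2}$, gives the stated constant. The ratio of the constants in Theorems \ref{th1.5} and \ref{th1.3} is consistent with the Jacobian of the Cayley transform. Equality in Frank--Lieb forces $g(\eta)=c|1-\bar\xi\cdot\eta|^{-(Q_{n-m}+s-4m)/2}$ up to the normalization conventions of the theorem statement.

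\textbf{Extremals and main obstacle.} The extremal $f$ is then $\mathcal{L}_s^{-1}\widetilde{\mathcal R}_m^*g=c\int|u^{-1}v|^{s-Q_n}g(v)\,d\sigma(v)$. The theorem's stated form, $|z''|^{s-2m}\int|u^{-1}v|^{4m-Q_n-s}(\cdots)$, is presumably the same function rewritten via a Kelvin/inversion identity. I expect the main obstacle to be this rewriting: verifying that identity, rather than carrying the form over blindly from Theorem \ref{th1.3}, where an analogous relation was presumably derived. If it resists, I would instead deduce Theorem \ref{th1.5} from Theorem \ref{th1.3} by the Cayley transform and simply transport the extremals. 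Tracking all constants ($2$-powers, $\pi^m$ from integrating out $z''$) is routine but error-prone.
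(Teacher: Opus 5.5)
Your argument is correct for the inequality and the sharp constant, and it takes a different route from the paper. The paper never applies HLS on the subsphere. It obtains the exact splitting \eqref{3.6} by transporting the spherical identity of Lemma~\ref{lm4.3} through $\mathscr{C}_n$, which fixes the subsphere \eqref{d1.8} pointwise with $2|J_{\mathscr{C}_n}|=1$ there; the extra factor $2$ is \eqref{a2.6}. So Theorem~\ref{th1.5} is Theorem~\ref{th1.4} written in Heisenberg coordinates, and the inequality then follows from the sharp Sobolev inequality \eqref{Sobolev inequalities1} on $\mathbb{S}^{2(n-m)+1}$ of order $s-2m$. The machinery behind this is $\mathcal{S}_m\mathcal{S}_m^{*}=\mathrm{const}$ from \eqref{3.4} and the kernel computation \eqref{3.5}.

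You instead apply $TT^{*}$ directly. The identity $|v^{-1}w|^2=2|1-\zeta\cdot\overline{\eta}|$ on the subsphere is \eqref{a4.3} at $|z|=1$, $t=0$. It turns $\|\mathcal{L}_{s,\mathbb{H}^n}^{-1/2}\widetilde{\mathcal{R}}_m^{*}g\|_2^2$ into a Frank--Lieb form with $\lambda=Q_n-s$, and the bookkeeping closes: $a_{n,s}2^{(s-Q_n)/2}=\Gamma^2(\frac{Q_n-s}{4})/(4\pi^{n+1}\Gamma(\frac{s}{2}))$, and the reciprocal of its product with the HLS constant is exactly the stated constant. This is shorter and needs none of Lemmas~\ref{lm4.1}--\ref{lm4.3}. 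Equality cases follow from Frank--Lieb's by the usual duality argument (test with $g=|\widetilde{\mathcal{R}}_mf|^{p-2}\widetilde{\mathcal{R}}_mf$), giving $f=c\,\mathcal{L}_{s,\mathbb{H}^n}^{-1}\widetilde{\mathcal{R}}_m^{*}g$ with $g$ an HLS optimizer. What the paper's route buys is an identity with explicit remainder rather than only an inequality; its spherical version is what gets differentiated at $s=Q_n$ for Theorem~\ref{th1.7}.

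On the extremals, first a slip: the HLS optimizer is $g(\eta)=c\,|1-\overline{\xi}\cdot\eta|^{-(Q_n+s-4m)/2}$, with exponent $(2Q_{n-m}-\lambda)/2$, not $(Q_{n-m}+s-4m)/2$. The identity linking your form to an extension-type formula is the bubble equation, not an inversion.
\begin{itemize}
\item For $u$ off the subsphere, let $\zeta^{\flat}\in\mathbb{C}^{n-m+1}$ be the first $n-m+1$ coordinates of $\mathscr{C}_n(u)$, so $|\zeta^{\flat}|<1$.
\item By \eqref{a4.3}, $v\mapsto|u^{-1}v|^{s-Q_n}$ on the subsphere is a multiple of the bubble $|1-\zeta^{\flat}\cdot\overline{\eta}|^{-(Q_{n-m}-(s-2m))/2}$.
\item $\mathcal{A}_{s-2m,\mathbb{S}^{2(n-m)+1}}$ maps this to a multiple of $(1-|\zeta^{\flat}|^2)^{\frac{s-2m}{2}}|1-\zeta^{\flat}\cdot\overline{\eta}|^{-(Q_n+s-4m)/2}$.
\item Your $g$ is a multiple of $\mathcal{A}_{s-2m,\mathbb{S}^{2(n-m)+1}}\phi$ with $\phi(\eta)=|1-\overline{\xi}\cdot\eta|^{-(Q_n-s)/2}$. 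Self-adjointness then gives $f\propto N(u)^{\frac{s-2m}{2}}\int_{\mathbb{S}^{2(n-m)+1}}|u^{-1}v|^{4m-Q_n-s}\phi(v)\,dv$, where $N(u)=4\sum_{j=n-m+2}^{n}|z_j|^2+(1-|z|^2)^2+t^2$.
\end{itemize}

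Your wariness about copying the form from Theorem~\ref{th1.3} is justified. On the subsphere \eqref{d1.8} the coordinate $z_{n-m+1}$ belongs to $z''$, so $|z''|^{s-2m}$ does not vanish there; for $m=1$ the subsphere is all of $\{|z|=1,\ t=0\}$. Meanwhile $\int|u^{-1}v|^{4m-Q_n-s}\phi(v)\,dv$ diverges on the subsphere because $Q_n+s-4m>Q_{n-m}$.

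The displayed weight $|z''|^{s-2m}$ comes from reusing \eqref{b3.1}, which was derived for $\mathscr{C}_n(\mathbb{H}^{n-m})$, in the proof of \eqref{3.6}. The bubble exponent $(Q_{n-m}-s)/2$ should be $(Q_n-s)/2$. The splitting itself, taken with the correctly transported operator, and hence the inequality and constant, are unaffected. So state the extremals in your form, or with weight $N(u)^{(s-2m)/2}$, rather than trying to verify the displayed formula literally.
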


We now consider the limiting case \( s = Q_n \). At this endpoint, it becomes necessary to introduce the operator \( \mathcal{A}'_{Q_n,\mathbb{S}^{2n+1}} \), which was originally defined by Branson, Fontana and Morpurgo \cite{br1} on the space of CR-pluriharmonic functions.

Let us denote
\[
\mathcal{P}_{n} = \bigoplus_{j>0} \bigl(\mathcal{H}_{j,0} \oplus \mathcal{H}_{0,j}\bigr) \oplus \mathcal{H}_{0,0}
= \bigl\{ L^2 \text{ CR-pluriharmonic functions} \bigr\};
\]
and
\[
\mathbb{R}\mathcal{P}_{n} = \bigl\{ L^2 \text{ real-valued CR-pluriharmonic functions} \bigr\}.
\]
Define
\begin{align}\nonumber
\mathcal{A}'_{Q_n,\mathbb{S}^{2n+1}} F
=& -\frac{4}{\Gamma\bigl(\frac{Q_n}{2}\bigr)}
\frac{\partial}{\partial s}\Big|_{s=Q_n} \mathcal{A}_{s,\mathbb{S}^{2n+1}} F \\
\label{1.7}
=& -\frac{4}{\Gamma\bigl(\frac{Q_n}{2}\bigr)}
\lim_{s\to Q_n} \frac{1}{s-Q_n}\mathcal{A}_{s,\mathbb{S}^{2n+1}} F,
\qquad F \in C^{\infty}(\mathbb{S}^{2n+1}) \cap \mathcal{P}_{n}.
\end{align}
Branson, Fontana and Morpurgo proved the following Beckner--Onofri-type inequality for \( \mathcal{A}'_{Q_n,\mathbb{S}^{2n+1}} \):
\begin{theorem}\label{th3.1} \cite{br1} \label{brl}
For any \( F \in W^{Q_n/2,2} \cap \mathbb{R}\mathcal{P}_{n} \), it holds
\begin{align}\label{1.8}
\frac{1}{2(n+1)!|\mathbb{S}^{2n+1}|}
\int_{\mathbb{S}^{2n+1}} F\mathcal{A}_{Q_n}'F
+\frac{1}{|\mathbb{S}^{2n+1}|}\int_{\mathbb{S}^{2n+1}} F \geq \log\Bigl(\frac{1}{|\mathbb{S}^{2n+1}|}\int_{\mathbb{S}^{2n+1}} e^{F}\Bigr).
\end{align}
Equality holds only for functions of the form
\[
- Q_n \ln|1 - \overline{\xi} \cdot \zeta| + c, \qquad
c \in \mathbb{R},\; \xi \in \mathbb{C}^{n+1},\; |\xi| < 1,\; \zeta \in \mathbb{S}^{2n+1}.
\]
\end{theorem}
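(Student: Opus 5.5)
The plan is to obtain \eqref{1.8} as the endpoint limit $s\to Q_n$ of the sharp Sobolev inequality for $\mathcal{A}_{s,\mathbb{S}^{2n+1}}$, following Beckner's argument on the standard sphere. For $0<s<Q_n$ and $p_s=\frac{2Q_n}{Q_n-s}$, the Frank--Lieb sharp HLS inequality on the CR sphere, together with duality, gives
\[
\Bigl(\int_{\mathbb{S}^{2n+1}}|F|^{p_s}\Bigr)^{2/p_s}\le C_s\int_{\mathbb{S}^{2n+1}}F\,\mathcal{A}_{s,\mathbb{S}^{2n+1}}F .
\]
Equality holds for constants. Evaluating at $F\equiv1$, where $\mathcal{A}_s1=\lambda_0(s)^2$, gives
\[
C_s=|\mathbb{S}^{2n+1}|^{2/p_s-1}\lambda_0(s)^{-2}.
\]
I would normalize by this, so that the inequality reads
\[
\Bigl(\frac{1}{|\mathbb{S}^{2n+1}|}\int_{\mathbb{S}^{2n+1}}|F|^{p_s}\Bigr)^{2/p_s}\le\frac{1}{\lambda_0(s)^2|\mathbb{S}^{2n+1}|}\int_{\mathbb{S}^{2n+1}}F\,\mathcal{A}_sF .
\]

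Fix a real $G\in C^\infty(\mathbb{S}^{2n+1})\cap\mathbb{R}\mathcal{P}_n$ and apply the normalized inequality to $F=1+G/p_s$, which is positive once $s$ is close to $Q_n$. The left side is $\bigl(\frac{1}{|\mathbb{S}^{2n+1}|}\int (1+G/p_s)^{p_s}\bigr)^{2/p_s}$. Since $p_s\to\infty$, its logarithm equals
\[
\frac{2}{p_s}\log\Bigl(\frac{1}{|\mathbb{S}^{2n+1}|}\int e^{G}\Bigr)+o\bigl(\tfrac{1}{p_s}\bigr).
\]

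For the right side, expand using that $\mathcal{A}_s$ is self-adjoint and $\mathcal{A}_s1=\lambda_0(s)^2$:
\[
\frac{1}{\lambda_0^2|\mathbb{S}^{2n+1}|}\int F\mathcal{A}_sF=1+\frac{2}{p_s|\mathbb{S}^{2n+1}|}\int G+\frac{1}{p_s^2\lambda_0^2|\mathbb{S}^{2n+1}|}\int G\mathcal{A}_sG .
\]
The key point is the pluriharmonic restriction. On $\mathcal{H}_{j,0}\oplus\mathcal{H}_{0,j}$ the eigenvalue is $\lambda_j(s)\lambda_0(s)$, and $\lambda_0(s)=\Gamma(\frac{Q_n+s}{4})/\Gamma(\frac{Q_n-s}{4})\sim\frac{Q_n-s}{4}\Gamma(\frac{Q_n}{2})$. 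Hence $\mathcal{A}_s$ restricted to $\mathcal{P}_n$ vanishes to first order, and by \eqref{1.7},
\[
\mathcal{A}_sG=\frac{(Q_n-s)\Gamma(Q_n/2)}{4}\,\mathcal{A}'_{Q_n}G+o(Q_n-s)\quad\text{on }\mathcal{P}_n .
\]
(For $j,k\ge1$ the eigenvalues do not vanish, which is exactly why the restriction to $\mathbb{R}\mathcal{P}_n$ is needed.) Substituting $p_s=\frac{2Q_n}{Q_n-s}$, the last term becomes
\[
\frac{(Q_n-s)^2}{4Q_n^2}\cdot\frac{16}{(Q_n-s)^2\Gamma(Q_n/2)^2}\cdot\frac{(Q_n-s)\Gamma(Q_n/2)}{4|\mathbb{S}^{2n+1}|}\int G\mathcal{A}'_{Q_n}G ,
\]
which is of order $1/p_s$, like the middle term. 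Take logarithms on both sides, multiply by $p_s/2$, and let $s\to Q_n$. This yields \eqref{1.8} with coefficient
\[
\frac{1}{Q_n\Gamma(Q_n/2)}=\frac{1}{2(n+1)\,n!}=\frac{1}{2(n+1)!}.
\]
This matches the stated constant, which is a good consistency check. A density argument then extends the inequality to $W^{Q_n/2,2}\cap\mathbb{R}\mathcal{P}_n$.

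The main obstacle is the characterization of equality, which does not survive the limit directly. I would first verify that $-Q_n\ln|1-\overline{\xi}\cdot\zeta|+c$ gives equality. These functions are CR-pluriharmonic, being real parts of holomorphic functions, and they arise as limits of the Frank--Lieb extremals $|1-\overline{\xi}\cdot\zeta|^{-(Q_n-s)/2}$ written as $1+G_s/p_s$. For the converse I would use conformal invariance: the inequality is invariant under $G\mapsto G\circ\tau+\log J_\tau$ for CR automorphisms $\tau$. This lets me normalize an extremal so that $e^{G}$ has vanishing first moments, i.e. its center of mass is zero. The Euler--Lagrange equation $\frac{1}{(n+1)!}\mathcal{A}'G+1=\frac{|\mathbb{S}^{2n+1}|\,e^{G}}{\int e^{G}}$ (with $\mathcal{A}'$ understood after projection onto $\mathcal{P}_n$), combined with a second-variation argument in the spirit of Frank--Lieb, should then force $G$ to be constant.
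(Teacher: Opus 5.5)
The paper does not prove this statement. It is quoted from \cite{br1} and used as a black box in the proof of Theorem~\ref{th1.7}, so your argument can only be judged on its own terms.

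\textbf{The inequality.} Your derivation is correct. You test the normalized form of \eqref{Sobolev inequalities1} on $F=1+G/p_s$, and the decisive point is the one you isolate. At $s=Q_n$, $\lambda_0(s)^2$ vanishes to second order, while on $\mathcal{H}_{j,0}\oplus\mathcal{H}_{0,j}$ the eigenvalue $\lambda_j(s)\lambda_0(s)$ vanishes only to first order. So the quadratic term is $O(1/p_s)$ exactly when $G\in\mathbb{R}\mathcal{P}_n$, and $1/(Q_n\Gamma(Q_n/2))=1/(2(n+1)!)$ is the right constant. Two small points: the limit needs a bound on $\lambda_j(s)\lambda_0(s)/(Q_n-s)$ that is polynomial in $j$ and uniform for $s$ near $Q_n$, which is harmless for smooth $G$; and the density step needs only Fatou's lemma. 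Your check that $-Q_n\ln|1-\overline{\xi}\cdot\zeta|+c$ attains equality also works. Indeed $|1-\overline{\xi}\cdot\zeta|^{-(Q_n-s)/2}=e^{G/p_s}$, and the non-pluriharmonic $O(p_s^{-2})$ remainder contributes only $O((Q_n-s)^2)$ to the normalized energy.

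\textbf{The gap.} The converse half of the equality statement is only asserted (``should then force $G$ to be constant''). Your Euler--Lagrange equation also has the projection in the wrong place. The correct form is $\frac{1}{(n+1)!}\mathcal{A}'_{Q_n}G+1=\Pi_{\mathcal{P}_n}\bigl(|\mathbb{S}^{2n+1}|e^G/\int e^G\bigr)$. Since $e^G$ is not pluriharmonic for the nonconstant extremals, nothing can be read off from this equation directly, so the second variation has to do the work.

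\textbf{Closing the gap.} The step does close, and more simply than Frank--Lieb's HLS argument, because the coordinates themselves serve as test functions. The functions $v=\operatorname{Re}\zeta_j$ and $v=\operatorname{Im}\zeta_j$ lie in $\mathbb{R}\mathcal{P}_n$ and satisfy $\mathcal{A}'_{Q_n}v=(n+1)!\,v$. Set $d\mu=e^G d\zeta/\int e^G$ and assume $\int\zeta_j\,d\mu=0$ for all $j$. Nonnegativity of the second variation then gives $\int v^2\,d\mu\le\frac{1}{2n+2}$ for each of the $2n+2$ real coordinates. Summing gives $\int|\zeta|^2d\mu=1\le 1$, so every one of these inequalities is an equality. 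Hence each such $v$ is a null vector of the nonnegative Hessian. Consequently $\int w\,v\,\phi=0$ for all $w\in\mathbb{R}\mathcal{P}_n$, where $\phi=|\mathbb{S}^{2n+1}|^{-1}-e^G/\int e^G$. By complex linearity, $\int w\,\overline{\zeta}_j\phi=0$ for all $w\in\mathcal{P}_n$. Writing a holomorphic $w$ as $\sum_j\overline{\zeta}_j(\zeta_j w)$, and conjugating for the antiholomorphic case, gives $\Pi_{\mathcal{P}_n}\phi=0$. The Euler--Lagrange equation then reduces to $\mathcal{A}'_{Q_n}G=0$, so $G$ is constant. (Differentiability is not an issue: applying the inequality to $\pm tG$ gives $e^{t|G|}\in L^1$ for every $t$.)

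\textbf{Two tools you still need to justify.}
\begin{enumerate}
\item Invariance of the functional under $G\mapsto G\circ\tau+\log|J_\tau|$. This follows from the invariance of the Sobolev quotient under $F\mapsto|J_\tau|^{1/p_s}F\circ\tau$, using the same endpoint expansion. It relies on $\log|J_\tau|=-Q_n\ln|1-\overline{\xi}\cdot\zeta|+c$ being CR-pluriharmonic.
\item Existence of a CR automorphism $\tau$ with $\int\zeta\,e^{G\circ\tau}|J_\tau|=0$. This is the standard center-of-mass degree argument, as in \cite{fl2}.
\end{enumerate}
Undoing $\tau$ then gives exactly the stated family $-Q_n\ln|1-\overline{\xi}\cdot\zeta|+c$.
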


Our trace  Beckner--Onofri inequality at the endpoint \( s = Q_n \) reads as follows:

\begin{theorem}\label{th1.7}
Let \( 1 \leq m < n \). For any \( F \in W^{Q_n/2,2} \cap \mathbb{R}\mathcal{P}_{n} \), it holds
\begin{align*}
&\frac{1}{2(n-m+1)!|\mathbb{S}^{2(n-m)+1}|}
\int_{\mathbb{S}^{2n+1}} F\mathcal{A}_{Q_n}'F+\frac{\pi^{m}}{|\mathbb{S}^{2(n-m)+1}|}\int_{\mathbb{S}^{2(n-m)+1}} \widetilde{\mathcal{R}}_mF  \\
\geq&
\pi^{m} \log\Bigl(\frac{1}{|\mathbb{S}^{2n+1}|}\int_{\mathbb{S}^{2n+1}} e^{\widetilde{\mathcal{R}}_mF}\Bigr)
\end{align*}
Equality holds if and only if
\begin{align*}
F(\zeta)
=
\frac{\Gamma\bigl(\frac{Q_n-2m}{2}\bigr)}
{2\pi^{n-m+1}}
\int_{\mathbb{S}^{2(n-m)+1}} \frac{\bigl(\sum\limits_{j=n-m+2}^{n+1}|\zeta_{j}|^{2}\bigr)^{\frac{Q_n-2m}{2}}}
{|1-\zeta\cdot\overline{\eta}|^{Q_n-2m}}\Bigl(-Q_{n-m}\ln|1 - \overline{\xi} \cdot \eta| + c\Bigr)  d\eta,
\end{align*}
where \( c \in \mathbb{R} \), \( \xi \in \mathbb{C}^{n-m+1} \) with \( |\xi| < 1 \), and \( \zeta \in \mathbb{S}^{2(n-m)+1} \).
\end{theorem}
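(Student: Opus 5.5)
The proof of Theorem \ref{th1.7} is by duality, in the spirit of Bez--Machihara--Sugimoto: the trace Beckner--Onofri inequality is obtained as the Legendre dual of a sharp logarithmic HLS-type inequality on $\mathbb{S}^{2(n-m)+1}$, namely the one Frank--Lieb proved as the endpoint of their sharp HLS inequality on the CR sphere. We use that endpoint result directly, rather than passing to the limit $s\to Q_n$ in Theorem \ref{th1.4}; the latter is kept only as a consistency check on the constants.

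\textbf{Step 1 (adjoint of the trace and its Green kernel).} First we identify the adjoint of $\widetilde{\mathcal{R}}_m$ composed with the inverse of $\mathcal{A}'_{Q_n}$ on $\mathbb{R}\mathcal{P}_n$, modulo constants. For a measure $g\,d\eta$ on $\mathbb{S}^{2(n-m)+1}$ we set
\[
E g(\zeta)=\frac{\Gamma\bigl(\frac{Q_n-2m}{2}\bigr)}{2\pi^{n-m+1}}\int_{\mathbb{S}^{2(n-m)+1}}\frac{\bigl(\sum_{j\ge n-m+2}|\zeta_j|^2\bigr)^{\frac{Q_n-2m}{2}}}{|1-\zeta\cdot\overline{\eta}|^{Q_n-2m}}\,g(\eta)\,d\eta .
\]
Using the spherical harmonic decomposition \eqref{1.5}, we want to show that $Eg$ is CR-pluriharmonic and satisfies $\mathcal{A}'_{Q_n}Eg=c_{n,m}\,g\,d\eta$ in the distributional sense on the subsphere. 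Concretely, this amounts to computing $\langle \mathcal{A}'_{Q_n} Eg, Eg\rangle$ as a bilinear form in $g$ whose kernel on $\mathbb{S}^{2(n-m)+1}$ is $-\log|1-\zeta\cdot\overline\eta|$ times a constant; this is the logarithmic kernel of $\mathcal{A}'_{Q_{n-m}}$ on the subsphere. To get this kernel we differentiate in $s$ at $s=Q_n$ the identity from Theorem \ref{th1.4}: there the bilinear form is $|1-\zeta\cdot\overline\eta|^{-(Q_n-s)/2}$ times the constant appearing in that theorem, and the factor $\Gamma^2(\frac{Q_n-s}{4})$ creates the $1/(s-Q_n)$ that matches the definition \eqref{1.7}.

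\textbf{Step 2 (duality).} For real $F\in W^{Q_n/2,2}\cap\mathbb{R}\mathcal{P}_n$ and a probability density $g$ on the subsphere, Cauchy--Schwarz in the $\mathcal{A}'_{Q_n}$ inner product (which is positive on $\mathbb{R}\mathcal{P}_n$ modulo constants) gives
\[
\pi^m\int \widetilde{\mathcal{R}}_mF\,(g-\bar g)\le \frac{1}{2(n-m+1)!\,|\mathbb{S}^{2(n-m)+1}|}\int F\mathcal{A}'_{Q_n}F+\frac{\kappa}{2}\iint g(\zeta)\log\frac{1}{|1-\zeta\cdot\overline\eta|}g(\eta),
\]
with $\kappa$ determined by Step 1. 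Frank--Lieb's sharp log-HLS inequality on $\mathbb{S}^{2(n-m)+1}$ bounds the double integral by $\int g\log g$ plus a constant, with equality exactly at $g=|1-\overline\xi\cdot\eta|^{-Q_{n-m}}$ (normalised). Taking the supremum over $g$ and using the entropy duality $\sup_g\{\int \phi g-\int g\log g\}=\log\int e^{\phi}$ then yields the stated inequality. The prefactor $\pi^m$ and the normalisations of $|\mathbb{S}^{2n+1}|$ and $|\mathbb{S}^{2(n-m)+1}|$ must come out of the constants in Step 1.

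\textbf{Step 3 (equality).} Equality forces equality both in Cauchy--Schwarz, i.e.\ $F=E(\text{log-kernel potential of }g)$ up to constants, and in log-HLS, i.e.\ $g$ is an extremal. The log-kernel potential of such $g$ is $-Q_{n-m}\ln|1-\overline\xi\cdot\eta|+c$, which is exactly the restriction of a Branson--Fontana--Morpurgo extremal from Theorem \ref{brl}. This produces the stated form of $F$.

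The main obstacle is Step 1: verifying that $E$ maps into $\mathbb{R}\mathcal{P}_n$ and computing the exact constant relating $\mathcal{A}'_{Q_n}$ on $E g$ to the logarithmic kernel on the subsphere. The positive-$s$ kernel identities do not directly apply at the endpoint, so I would first establish the identity for $s<Q_n$ on $\mathcal{P}_n$-projections and then pass to the limit carefully. Along the way, every constant has to match the $\pi^m/|\mathbb{S}^{2(n-m)+1}|$ normalisation.
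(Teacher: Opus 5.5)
There is a genuine gap in Step 1, and it sits exactly where the content of the theorem lives: your constant $\kappa$, and hence the factor $\pi^m$. Write $\Sigma=\mathbb{S}^{2(n-m)+1}$.

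\textbf{What goes wrong in Step 1.} The operator $E$ you define, with kernel $\bigl(\sum_{j\ge n-m+2}|\zeta_j|^2\bigr)^{\frac{Q_n-2m}{2}}|1-\zeta\cdot\overline{\eta}|^{-(Q_n-2m)}$, is the \emph{extension} operator, i.e.\ the $s=Q_n$ case of the paper's $\widetilde{\mathcal{P}}_{s,m}$. It satisfies $\widetilde{\mathcal{R}}_mEh=h$. For CR-pluriharmonic $h$ on $\Sigma$, polarizing Lemma \ref{lm4.5} gives, weakly on $\mathcal{P}_n$, $\mathcal{A}'_{Q_n}Eh=\pi^m\,\widetilde{\mathcal{R}}_m^{*}\bigl(\mathcal{A}'_{Q_{n-m}}h\bigr)$. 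Hence $\langle\mathcal{A}'_{Q_n}Eh,Eh\rangle=\pi^m\langle\mathcal{A}'_{Q_{n-m}}h,h\rangle$. This has three consequences:
\begin{itemize}
\item $\mathcal{A}'_{Q_n}Eg\neq c_{n,m}\,g\,d\eta$.
\item $\langle\mathcal{A}'_{Q_n}Eg,Eg\rangle$ is a positive-order form in $g$, not the smoothing logarithmic one.
\item Since $\widetilde{\mathcal{R}}_mEg=g$, $Eg$ cannot be CR-pluriharmonic unless $g$ is.
\end{itemize}
The logarithm is the kernel of the \emph{inverse}: $\pi^{-(n-m+1)}\log\frac{1}{|1-\zeta\cdot\overline{\eta}|}$ is the Green function of $\mathcal{A}'_{Q_{n-m}}$ on $\mathcal{P}_{n-m}$ modulo constants, not the kernel of $\mathcal{A}'_{Q_{n-m}}$. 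So the identity you single out as the main obstacle is false. It is also inconsistent with your Step 3, where $E$ is correctly applied to the log-potential of $g$, i.e.\ used as an extension operator. Your differentiation-in-$s$ idea does compute something useful, but it computes the dual norm of the measure $\widetilde{\mathcal{R}}_m^*(g-\bar g)$, i.e.\ the energy of the Green potential of $(g-\bar g)\,d\eta$. It says nothing about $Eg$. It also needs $g-\bar g$, since $\Gamma^2(\frac{Q_n-s}{4})$ alone produces a double pole.

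\textbf{The repair, which needs no limit.} The operator $\mathcal{A}'_{Q_n}$ acts on $\mathcal{H}_{j,0}\oplus\mathcal{H}_{0,j}$, $j\ge1$, by $\Gamma(n+1+j)/\Gamma(j)$. The reproducing kernel of $\mathcal{H}_{j,0}$ is $\binom{n+j}{j}|\mathbb{S}^{2n+1}|^{-1}(\zeta\cdot\overline{\eta})^j$. Summing the series gives the Green function of $\mathcal{A}'_{Q_n}$ on $\mathcal{P}_n$ modulo constants: $G_n(\zeta,\eta)=\pi^{-(n+1)}\log\frac{1}{|1-\zeta\cdot\overline{\eta}|}$. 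Because $\zeta\cdot\overline{\eta}$ is the same in $\mathbb{C}^{n+1}$ and $\mathbb{C}^{n-m+1}$, you get $G_n|_{\Sigma\times\Sigma}=\pi^{-m}G_{n-m}$, which is the endpoint analogue of \eqref{3.4}.

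Now apply Cauchy--Schwarz against the CR-pluriharmonic Green potential $\mathcal{G}g=\pi^{-(n+1)}\int_\Sigma\log\frac{1}{|1-\zeta\cdot\overline{\eta}|}(g-\bar g)(\eta)\,d\eta$, where $g$ is a probability density with respect to $d\eta$. This gives your Step 2 with $\kappa=Q_{n-m}\pi^m$. Combine it with the sharp log-HLS on $\Sigma$, $\iint g\log\frac{1}{|1-\zeta\cdot\overline{\eta}|}g\,d\zeta\,d\eta\le\frac{2}{Q_{n-m}}\int_\Sigma g\log(|\Sigma|g)\,d\eta$, and with entropy duality. The result is exactly the stated inequality, with the average in the logarithm taken over $\Sigma$, and your Step 3 then goes through.

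\textbf{Comparison with the paper.} The paper obtains the exact splitting of Lemma \ref{lm4.5} by dividing Lemma \ref{lm4.3} by $s-Q_n$ and letting $s\to Q_n$; a separate lemma ensures the limiting extension stays CR-pluriharmonic. It then applies Theorem \ref{brl} on $\Sigma$ to $\widetilde{\mathcal{R}}_mF$, and the equality case is read off from the vanishing remainder. Your corrected route replaces that limiting argument with the one-line Green function identity, but then re-derives Branson--Fontana--Morpurgo on $\Sigma$ from log-HLS. Since $G_n|_{\Sigma\times\Sigma}=\pi^{-m}G_{n-m}$ already yields $\langle\mathcal{A}'_{Q_n}F,F\rangle\ge\pi^m\langle\mathcal{A}'_{Q_{n-m}}\widetilde{\mathcal{R}}_mF,\widetilde{\mathcal{R}}_mF\rangle$ by duality, you could simply quote Theorem \ref{brl} at that point.
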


Our method further allows us to derive trace versions of the Beckner--Onofri inequalities on the standard sphere, a setting that has not been previously considered in the literature \cite{ei,be2,bez}.

Let \(\mathbb{S}^{n}\) denote the standard sphere
\[
\mathbb{S}^{n} = \{ x \in \mathbb{R}^{n+1} : |x| = 1 \}.
\]
Denote by \(H^{s}(\mathbb{S}^{n})\) the completion of \(C^\infty(\mathbb{S}^{n})\) with respect to the norm
\[
\| F \|_{H^{s}(\mathbb{S}^{n})} = \| (-\Delta_{\mathbb{S}^{n}})^{s/2} F \|_{L^{2}(\mathbb{S}^{n})},
\]
where \(\Delta_{\mathbb{S}^{n}}\) is the Laplace--Beltrami operator on \(\mathbb{S}^{n}\).
The classical Beckner--Onofri inequality on \(\mathbb{S}^{n}\) reads (see \cite{on,be1}):

\begin{theorem}  \label{Beckner}
Define
\begin{equation*}
P_{s,\mathbb{S}^{n}} = \frac{\Gamma\bigl(B + \frac{1+s}{2} \bigr)}{\Gamma\bigl(B + \frac{1-s}{2} \bigr)}, \qquad
B = \sqrt{-\Delta_{\mathbb{S}^{n}} + \frac{(n-1)^{2}}{4}} .
\end{equation*}
Then, for \( s = n \) and any \( F \in H^{n}(\mathbb{S}^{n}) \),
\begin{equation}\label{1.9}
\ln\Bigl( \frac{1}{|\mathbb{S}^{n}|} \int_{\mathbb{S}^{n}} e^{F} \Bigr) \leq
\frac{1}{2n!|\mathbb{S}^{n}|} \int_{\mathbb{S}^{n}} F  P_{n,\mathbb{S}^{n}} F
+ \frac{1}{|\mathbb{S}^{n}|} \int_{\mathbb{S}^{n}} F .
\end{equation}
Equality holds only for functions of the form
\[
- n \ln| 1 -  \xi \cdot \zeta | + c, \qquad
\xi \in \mathbb{R}^{n+1},\;|\xi|<1,\;  \zeta \in \mathbb{S}^{n},\; c \in \mathbb{R}.
\]
\end{theorem}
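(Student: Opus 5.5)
The plan is to obtain \eqref{1.9} by the same duality mechanism used elsewhere in the paper, with the logarithmic Hardy--Littlewood--Sobolev inequality on $\mathbb{S}^n$ replacing the power-type HLS. By Funk--Hecke, $P_{n,\mathbb{S}^n}$ acts on degree-$k$ spherical harmonics by the eigenvalue
\[
\frac{\Gamma(k+n)}{\Gamma(k)} .
\]
This vanishes exactly for $k=0$ and is positive otherwise. The first step is to identify the inverse of $P_{n,\mathbb{S}^n}$ on mean-zero functions as an integral operator. Its kernel is
\[
c_n\ln\frac{1}{|\zeta-\eta|}+\text{const},
\]
where $c_n$ is obtained by expanding $\ln\frac{1}{|\zeta-\eta|}$ in Gegenbauer polynomials. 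Equivalently, $c_n$ is found by differentiating at $\lambda=0$ the known Funk--Hecke eigenvalues of $|\zeta-\eta|^{-\lambda}$, which are ratios of Gamma functions. This gives, for mean-zero $g$,
\[
\int\!\!\int g(\zeta)\ln\frac{1}{|\zeta-\eta|}\,g(\eta)\,d\zeta\, d\eta=c_n^{-1}\langle g,P_n^{-1}g\rangle .
\]

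Next I invoke the sharp logarithmic HLS inequality on $\mathbb{S}^n$ (Carlen--Loss, Beckner). It states that for every probability density $\rho$ on $\mathbb{S}^n$,
\[
\int\!\!\int \rho(\zeta)\ln\frac{1}{|\zeta-\eta|}\rho(\eta)\le \frac1n\int\rho\ln(|\mathbb{S}^n|\rho)+C_n ,
\]
with equality exactly for the conformal images of the uniform density. These are
\[
\rho=|\mathbb{S}^n|^{-1}\,\frac{(1-|\xi|^2)^{n}}{|1-\xi\cdot\zeta|^{2n}} .
\]
The constant $C_n$ is fixed by testing $\rho=|\mathbb{S}^n|^{-1}$.

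The duality step runs as follows. Given $F$ with mean $\bar F$, put $\rho=e^F/\int e^F$ and use the Gibbs variational identity
\[
\ln\Bigl(\tfrac{1}{|\mathbb{S}^n|}\int e^F\Bigr)=\int F\rho-\int\rho\ln(|\mathbb{S}^n|\rho).
\]
Write $\int F\rho=\bar F+\int (F-\bar F)(\rho-|\mathbb{S}^n|^{-1})$. Apply Cauchy--Schwarz in the inner product $\langle\cdot,P_n\cdot\rangle$ on mean-zero functions, together with $ab\le \frac{\epsilon}{2}a^2+\frac{1}{2\epsilon}b^2$, to get
\[
\int (F-\bar F)g\le \tfrac{\epsilon}{2}\langle F,P_nF\rangle+\tfrac{1}{2\epsilon}\langle g,P_n^{-1}g\rangle,\qquad g=\rho-|\mathbb{S}^n|^{-1}.
\]
The term $\langle g,P_n^{-1}g\rangle$ is the log-kernel energy of $\rho$ minus that of the uniform measure. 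Choosing $\epsilon$ so that $\frac{1}{2\epsilon}c_n^{-1}\cdot\frac1n$ equals $1$ makes the log-HLS bound exactly cancel the entropy term $-\int\rho\ln(|\mathbb{S}^n|\rho)$. What remains is \eqref{1.9}, and I expect the resulting coefficient to be $\frac{1}{2n!|\mathbb{S}^n|}$.

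For the equality case, equality forces equality in log-HLS, so $\rho$ is a conformal density as above. Equality in Cauchy--Schwarz then forces $F-\bar F=\lambda P_n^{-1}g$ for some $\lambda$. Applying the kernel to these densities gives $F=-n\ln|1-\xi\cdot\zeta|+c$.

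The main obstacle is bookkeeping: computing $c_n$ exactly and checking that it matches $\frac{1}{2n!|\mathbb{S}^n|}$. My first attempt would use $\frac{d}{d\lambda}\big|_{\lambda=0}$ of the Funk--Hecke eigenvalues
\[
\frac{\Gamma(n-\lambda/2)\,\Gamma(k+\lambda/2)}{\Gamma(\lambda/2)\,\Gamma(k+n-\lambda/2)}\times(\text{normalizing constant}).
\]
If this becomes messy, a fallback is to derive \eqref{1.9} as the limit $s\to n$ of the sharp Sobolev inequality for $P_{s,\mathbb{S}^n}$. There one tests with $1+\frac{n-s}{2n}F$ and expands to first order in $n-s$, with the constants then inherited from the Sobolev inequality.
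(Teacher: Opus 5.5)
The paper gives no proof of this statement. It is quoted from Onofri and Beckner \cite{on,be1} and used only as a black box, combined with Lemma~\ref{lm5.3}, to obtain Theorem~\ref{th1.8}. Your argument is the Carlen--Loss/Beckner duality proof: the Gibbs variational identity, quadratic duality between $P_{n,\mathbb{S}^n}$ and its inverse on mean-zero functions, and the sharp logarithmic HLS inequality. It closes with exactly the stated constant.

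Your route makes the theorem a consequence of log-HLS together with its equality cases, proved by Carlen--Loss via competing symmetries and by Beckner as the $\lambda\to 0$ endpoint of the sharp HLS inequality. Since the two inequalities are dual, the depth is moved rather than removed. In exchange, you get the equality cases for free, and the mechanism is the same HLS-duality idea the paper uses for its trace theorems. Your fallback, the $s\to n$ limit of the sharp Sobolev inequality for $P_{s,\mathbb{S}^n}$, is closer to how the paper treats endpoints (defining $\mathcal{A}'_{Q_n,\mathbb{S}^{2n+1}}$ as an $s$-derivative, obtaining Lemma~\ref{lm5.3} at $s=n$ by a limit). However, a limiting argument yields only the inequality, not the characterization of equality.

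\textbf{Bookkeeping corrections.}

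\emph{The constant $c_n$.} For $k\ge 1$ the kernel $\ln\frac{1}{|\zeta-\eta|}$ has eigenvalue $\frac{(n-1)!\,|\mathbb{S}^n|}{2}\frac{\Gamma(k)}{\Gamma(k+n)}$ on degree-$k$ harmonics. Here the $k$-independent Funk--Hecke prefactor for $|\zeta-\eta|^{-\lambda}$ is $2^{n-\lambda}\pi^{n/2}\Gamma(\frac{n-\lambda}{2})$. Hence $c_n=\frac{2}{(n-1)!\,|\mathbb{S}^n|}$; for $n=2$ this gives the familiar eigenvalues $2\pi/(k(k+1))$.

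\emph{The choice of $\epsilon$.} Since $\langle g,P_n^{-1}g\rangle=c_n\iint g\ln\frac{1}{|\zeta-\eta|}\,g$, the matching condition is $\frac{c_n}{2\epsilon n}=1$, not $\frac{1}{2\epsilon}c_n^{-1}\frac1n=1$. Then $\epsilon=\frac{c_n}{2n}$, and the coefficient is $\frac{\epsilon}{2}=\frac{c_n}{4n}=\frac{1}{2\,n!\,|\mathbb{S}^n|}$, as required.

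\emph{The extremal densities.} The conformal densities on $\mathbb{S}^n$ are $\rho=|\mathbb{S}^n|^{-1}\bigl(\frac{\sqrt{1-|\xi|^2}}{1-\xi\cdot\zeta}\bigr)^n$. The formula you wrote has the CR-sphere shape, is not normalized on $\mathbb{S}^n$, and would produce $-2n\ln(1-\xi\cdot\zeta)$.

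\emph{The equality case.} You do not need to apply the kernel. Since $\rho=e^F/\int e^F$, you get $F=\ln\rho+c=-n\ln(1-\xi\cdot\zeta)+c$ directly.

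\emph{Regularity.} Run the chain for smooth $F$, so that $\rho$ is smooth and every term is finite, and then extend the inequality by density.
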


Define the lower-dimensional sphere
\[
\mathbb{S}^{n-m} = \bigl\{ (\xi_{1},\dots, \xi_{n+1}) \in \mathbb{S}^{n} : \xi_{n-m+2} = \cdots = \xi_{n+1} = 0 \bigr\},
\]
and denote by \(\widetilde{\tau}_{m}\) the restriction operator mapping functions \(F \in H^{s}(\mathbb{S}^{n})\) to \(H^{s}(\mathbb{S}^{n-m})\).

Our trace  Beckner--Onofri inequality on \(\mathbb{S}^{n}\) is as follows.

\begin{theorem}\label{th1.8}
Let \( 1 \leq m < n \). For any \( F \in H^{n}(\mathbb{S}^{n}) \), we have
\begin{align}\nonumber
&\frac{1}{2(n-m)!|\mathbb{S}^{n-m}|} \int_{\mathbb{S}^{n}} F  P_{n,\mathbb{S}^{n}} F
+ \pi^{m/2}  2^{m}  \frac{\Gamma\left(\frac{n}{2}\right)}{\Gamma\left(\frac{n-m}{2}\right)|\mathbb{S}^{n-m}|} \int_{\mathbb{S}^{n-m}} \widetilde{\tau}_{m}F \\
\label{eq:trace-ineq2}
\geq&\pi^{m/2}  2^{m}  \frac{\Gamma\left(\frac{n}{2}\right)}{\Gamma\left(\frac{n-m}{2}\right)}\ln\Bigl( \frac{1}{|\mathbb{S}^{n-m}|} \int_{\mathbb{S}^{n-m}} e^{\widetilde{\tau}_{m}F} \Bigr).
\end{align}
Equality holds only if
\begin{align*}
F(\zeta)=&\pi^{\frac{m-n}{2}}\frac{\Gamma\bigl(n-m\bigr)}{\Gamma(\frac{n-m}{2})}\Bigl(\sum_{j=n-m+2}^{n+1}|\zeta_{j}|^{2}\Bigr)^{\frac{n-m}{2}}
\int_{\mathbb{S}^{n-m}} \frac{- (n-m) \ln| 1 -  \xi\cdot\eta | + c}{|\zeta-\eta|^{2n-2m}}
d\eta,
\end{align*}
where $\xi \in \mathbb{R}^{n+1},\;|\xi|<1,\;  \xi \in \mathbb{S}^{n},\; c \in \mathbb{R}.$
\end{theorem}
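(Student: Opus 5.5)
We will prove Theorem \ref{th1.8} by the Bez--Machihara--Sugimoto duality argument, with the Beckner--Onofri inequality (Theorem \ref{Beckner}) on $\mathbb{S}^{n-m}$ as the key input. Throughout, write $\mathcal{E}(F)=\frac{1}{2(n-m)!|\mathbb{S}^{n-m}|}\int_{\mathbb{S}^n}FP_{n,\mathbb{S}^n}F$ and $\kappa=\pi^{m/2}2^m\Gamma(\frac n2)/\Gamma(\frac{n-m}2)$. The logarithmic inequality \eqref{eq:trace-ineq2} is a Legendre-type dual statement: since $\ln\frac{1}{|\mathbb{S}^{n-m}|}\int e^{G}=\sup_{g}\{\int Gg-\int g\ln(|\mathbb{S}^{n-m}|g)\}$ over probability densities $g$ on $\mathbb{S}^{n-m}$, it suffices to prove, for every such $g$ and for $G=\widetilde\tau_mF$,
\[
\kappa\int_{\mathbb{S}^{n-m}}\widetilde\tau_mF\,\Bigl(g-\tfrac1{|\mathbb{S}^{n-m}|}\Bigr)\le \mathcal{E}(F)+\kappa\int_{\mathbb{S}^{n-m}} g\ln(|\mathbb{S}^{n-m}|g).
\]

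\textbf{Step 1 (adjoint of the trace).} First I will compute $\widetilde\tau_m^{*}$ with respect to the quadratic form of $P_{n,\mathbb{S}^n}$, modulo constants (the kernel of $P_n$). The Green's function of $P_{n,\mathbb{S}^n}$ on mean-zero functions is $c_n\ln\frac{1}{|\zeta-\eta|}$ with $c_n=\frac{2}{(n-1)!|\mathbb{S}^n|}$, normalized so that $P_n\ln\frac1{|\zeta-\eta|}=\frac{(n-1)!|\mathbb{S}^n|}{2}\bigl(\delta-\frac1{|\mathbb{S}^n|}\bigr)$. Hence for a measure $\mu$ on $\mathbb{S}^{n-m}$ of total mass zero, the minimizer of $\frac12\int FP_nF-\int_{\mathbb{S}^{n-m}}\widetilde\tau_m F\,d\mu$ is $F_\mu=c_n\int_{\mathbb{S}^{n-m}}\ln\frac{1}{|\zeta-\eta|}\,d\mu(\eta)$, and Cauchy--Schwarz in the $P_n$-inner product gives
\[
\int\widetilde\tau_mF\,d\mu\le \Bigl(\int FP_nF\Bigr)^{1/2}\Bigl(c_n\iint_{\mathbb{S}^{n-m}\times\mathbb{S}^{n-m}}\ln\tfrac1{|\eta-\eta'|}\,d\mu\,d\mu\Bigr)^{1/2}.
\]
Using $ab\le \frac{a^2}{2\lambda}+\frac{\lambda b^2}{2}$ with $\lambda$ matched to $\mathcal{E}$, the inequality reduces to a logarithmic HLS inequality on $\mathbb{S}^{n-m}$ for $\mu=g-\frac1{|\mathbb{S}^{n-m}|}$.

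\textbf{Step 2 (log-HLS on the sub-sphere).} The needed inequality is the sharp Carlen--Loss/Beckner logarithmic HLS inequality on $\mathbb{S}^{n-m}$,
\[
\frac{1}{n-m}\iint g(\eta)\ln\frac{1}{|\eta-\eta'|}g(\eta')\,d\eta\, d\eta'+\text{const}\le \int g\ln(|\mathbb{S}^{n-m}|g),
\]
which is exactly the dual of Theorem \ref{Beckner} in dimension $n-m$. I take it as given through that duality. The remaining work is arithmetic: one checks that $c_n$, $(n-m)!$, $|\mathbb{S}^n|/|\mathbb{S}^{n-m}|$ and the constant $\frac{1}{n-m}$ combine to exactly $\kappa$. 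Note that the $\ln$ kernel does not change its exponent under restriction, which is why, unlike the $s<Q_n$ cases, no factor of the type $\Gamma(s/2)/\Gamma((s-2m)/2)$ appears. The factor $\pi^{m/2}2^m\Gamma(\frac n2)/\Gamma(\frac{n-m}2)$ should be the ratio $\frac{(n-m)!|\mathbb{S}^{n-m}|}{(n-1)!|\mathbb{S}^n|}$-type quotient after simplification. I expect this constant bookkeeping, together with pinning down the exact Green's-function normalization of $P_{n,\mathbb{S}^n}$ on the sphere via the Funk--Hecke eigenvalues $\Gamma(k+n)/\Gamma(k)$, to be the main obstacle.

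\textbf{Step 3 (equality).} Equality forces equality in Cauchy--Schwarz, so $F$ is a multiple of $F_\mu$ plus a constant. It also forces equality in log-HLS, so $g$ is a conformal density $|1-\xi\cdot\eta|^{-(n-m)}$ up to normalization, with $\ln g$ matching the Beckner--Onofri extremal $-(n-m)\ln|1-\xi\cdot\eta|+c$. Rewriting $F_\mu$ as the extension of that boundary function through the Poisson-type kernel yields the stated form. The kernel $(\sum_{j>n-m+1}|\zeta_j|^2)^{\frac{n-m}{2}}|\zeta-\eta|^{-2(n-m)}$ arises as the $P_n$-harmonic extension, obtained by differentiating in $s$ the $s<n$ extension formula analogous to Theorem \ref{th1.4} at $s=n$. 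I will verify this identification by stereographic projection to $\mathbb{R}^n$, where the extension is the Einav--Loss extremal of Theorem \ref{th1} in the limit $s\to n/2$.
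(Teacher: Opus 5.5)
Your route is genuinely different from the paper's and it works. Your normalization $c_n=\frac{2}{(n-1)!|\mathbb{S}^n|}$ is correct. However, two constants in Steps 1--2 are misstated, and the first one, taken literally, breaks the final step.

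The sharp logarithmic HLS inequality on $\mathbb{S}^{d}$ that is dual to Theorem \ref{Beckner} is
\[
d\iint_{\mathbb{S}^{d}\times\mathbb{S}^{d}}\mu(\eta)\ln\frac{1}{|\eta-\eta'|}\,\mu(\eta')\,d\eta\,d\eta'\le\int_{\mathbb{S}^{d}}g\ln\bigl(|\mathbb{S}^{d}|g\bigr),\qquad \mu=g-\frac{1}{|\mathbb{S}^{d}|}.
\]
The factor $d$ (not $\frac1d$) sits on the energy side. Indeed, dualizing Theorem \ref{Beckner} with $P_{d,\mathbb{S}^d}^{-1}=c_d\ln\frac{1}{|\eta-\eta'|}$ on mean-zero data gives the coefficient $\frac{d!\,|\mathbb{S}^{d}|}{2}c_d=d$. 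With your $\frac{1}{n-m}$ the inequality is strictly weaker as soon as $n-m\ge 2$, and the bookkeeping cannot return $\kappa$.

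Also, $\kappa$ is not the quotient you guess. Using $(n-1)!\,|\mathbb{S}^{n}|=2^{n}\pi^{n/2}\Gamma(\frac n2)$,
\[
\kappa=\frac{c_{n-m}}{c_n}=\frac{(n-1)!\,|\mathbb{S}^{n}|}{(n-m-1)!\,|\mathbb{S}^{n-m}|}=\pi^{m/2}2^{m}\frac{\Gamma(\frac n2)}{\Gamma(\frac{n-m}2)}.
\]
So a factor of the type $\Gamma(\frac s2)/\Gamma(\frac{s-m}2)$ does appear: it is the value at $s=n$ of $\gamma_n(s)/\gamma_{n-m}(s-m)$ from Lemma \ref{lm5.3}.

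With these corrections everything closes exactly. Your AM--GM step needs $\frac{\kappa(n-m)!\,|\mathbb{S}^{n-m}|}{2}\,c_n\iint\mu\ln\frac{1}{|\eta-\eta'|}\mu\le\int g\ln(|\mathbb{S}^{n-m}|g)$. Since $\kappa c_n=c_{n-m}=\frac{2}{(n-m-1)!\,|\mathbb{S}^{n-m}|}$, the prefactor is exactly $n-m$.

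\textbf{How the paper argues.} It proves the exact Pythagorean identity of Lemma \ref{lm5.3} for $m<s<n$. This uses Bez--Machihara--Sugimoto with Riesz kernels on $\mathbb{R}^n$, where $\widetilde{\mathcal{S}}_m\widetilde{\mathcal{S}}_m^{*}$ is a constant, transplanted to $\mathbb{S}^n$ by stereographic projection. It then lets $s\to n$ and reads off $\int FP_{n,\mathbb{S}^n}F\ge\kappa\int\widetilde\tau_mF\,P_{n-m,\mathbb{S}^{n-m}}\widetilde\tau_mF$. Finally it applies Theorem \ref{Beckner} on $\mathbb{S}^{n-m}$ directly.

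\textbf{How you argue.} You work at the endpoint from the start. Your only structural input is that the chordal log kernel of $\mathbb{S}^n$ restricts to that of $\mathbb{S}^{n-m}$. Hence, modulo constants and on mean-zero measures, $\widetilde\tau_mP_{n,\mathbb{S}^n}^{-1}\widetilde\tau_m^{*}=\kappa^{-1}P_{n-m,\mathbb{S}^{n-m}}^{-1}$; this is the $s=n$ analogue of $\widetilde{\mathcal{S}}_m\widetilde{\mathcal{S}}_m^{*}=\mathrm{const}$. You then use the dual (log-HLS) form of Beckner--Onofri.

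\textbf{What each buys.} Your route avoids the analytic continuation in $s$ and the stereographic bookkeeping, and the sharp constant drops out as $c_{n-m}/c_n$. The price is that you get no explicit remainder term. To put the extremals $F=\lambda F_\mu+\mathrm{const}$ in the form $\widetilde{\mathcal{Q}}_{n,m}(\widetilde\tau_mF)$, you still need the $s\to n$ limit of the extension formula, as you plan in Step 3. In the paper that form comes directly from the vanishing of the remainder in Lemma \ref{lm5.3}.
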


This paper is organized as follows. In Section 2, we collect the necessary preliminary material, including the Cayley transform and the sharp Sobolev inequalities on both the Heisenberg group and the CR sphere. Section 3 is devoted to the proof of our main results: Theorems \ref{th1.3}, \ref{th1.4}, \ref{th1.5} and \ref{th1.7}. Finally, in Section 4 we establish the trace versions of the Beckner-Onofri inequalities on the standard sphere, i.e. Theorem \ref{th1.8}.

\section{Preliminaries}
We begin by recalling some preliminary facts that will be used in the subsequent analysis.

\subsection{Cayley transform}
The Cayley transform \(\mathscr{C}_{n} : \mathbb{H}^n \to \mathbb{S}^{2n+1}\) and its inverse \(\mathscr{C}_{n}^{-1} : \mathbb{S}^{2n+1} \to \mathbb{H}^n\) are defined by
\begin{align}
\label{2.1}
\mathscr{C}_{n}(z, t) =& \Bigl( \frac{2z}{1 + |z|^2 + it},\; \frac{1 - |z|^2 - it}{1 + |z|^2 + it} \Bigr),\\
\label{2.2}
\mathscr{C}_{n}^{-1}(\zeta) =& \Bigl( \frac{\zeta_1}{1 + \zeta_{n+1}},\dots, \frac{\zeta_n}{1 + \zeta_{n+1}},\;
\operatorname{Im}\frac{1 - \zeta_{n+1}}{1 + \zeta_{n+1}} \Bigr).
\end{align}
The Jacobian of this transformation (see \cite{br1}) is
\[
|J_{\mathscr{C}_{n}}(z, t)| = \frac{2^{2n+1}}{\bigl( (1 + |z|^2)^2 + t^2 \bigr)^{n+1}}.
\]
Observe that
\begin{align}\label{2.3}
2|J_{\mathscr{C}_{n}}(z, t)| = \left(\frac{4}{(1 + |z|^2)^2 + t^2 }\right)^{\frac{Q_n}{2}}.
\end{align}
Via the Cayley transform, the operators \(\mathcal{L}_{s,\mathbb{H}^{n}}\) and \(\mathcal{A}_{s,\mathbb{S}^{2n+1}}\) are related as follows (see \cite{br1}): for \(0 < s < Q_n\) and \(F \in C^{\infty}(\mathbb{S}^{2n+1})\),
\begin{align}\label{2.6}
\mathcal{L}_{s,\mathbb{H}^{n}}\Bigl( (2|J_{\mathscr{C}_{n}}|)^{\frac{Q_n-s}{2Q_n}} (F \circ \mathscr{C}_{n}) \Bigr)
= (2|J_{\mathscr{C}_{n}}|)^{\frac{Q_n+s}{2Q_n}} (\mathcal{A}_{s,\mathbb{S}^{2n+1}} F) \circ \mathscr{C}_{n}.
\end{align}
In particular, choosing \(F \equiv 1\) yields
\begin{align}\label{2.7}
\mathcal{L}_{s,\mathbb{H}^{n}} (2|J_{\mathscr{C}_{n}}|)^{\frac{Q_n-s}{2Q_n}}
= \frac{\Gamma^{2}\bigl(\frac{Q_n + s}{4}\bigr)}{\Gamma^{2}\bigl(\frac{Q_n-s}{4}\bigr)}
(2|J_{\mathscr{C}_{n}}|)^{\frac{Q_n+s}{2Q_n}}, \qquad 0 < s < Q_n.
\end{align}

\subsection{Sobolev inequalities on \(\mathbb{H}^{n}\) and \(\mathbb{S}^{2n+1}\)}

We first verify formula \eqref{As}. For any spherical harmonic \(Y_{j,k} \in \mathcal{H}_{j,k}\), we have (see \cite{st,br1})
\begin{align*}
\mathcal{D} Y_{j,k} = \bigl(j + \tfrac{n}{2}\bigr)\bigl(k + \tfrac{n}{2}\bigr), \qquad
\mathcal{T} Y_{j,k} = \tfrac{i}{2}(j - k) Y_{j,k}.
\end{align*}
Consequently,
\[
\sqrt{\mathcal{D} - \mathcal{T}^{2}}  Y_{j,k}
= \sqrt{\bigl(j + \tfrac{n}{2}\bigr)\bigl(k + \tfrac{n}{2}\bigr) + \tfrac{(j-k)^{2}}{4}}  Y_{j,k}
= \tfrac{j + k + n}{2}  Y_{j,k}.
\]
Hence,
\begin{align*}
& \frac{\Gamma\Bigl(\sqrt{\mathcal{D} - \mathcal{T}^{2}} - i\mathcal{T} + \tfrac{2+s}{4}\Bigr)
      \Gamma\Bigl(\sqrt{\mathcal{D} - \mathcal{T}^{2}} + i\mathcal{T} + \tfrac{2+s}{4}\Bigr)}
     {\Gamma\Bigl(\sqrt{\mathcal{D} - \mathcal{T}^{2}} - i\mathcal{T} + \tfrac{2-s}{4}\Bigr)
      \Gamma\Bigl(\sqrt{\mathcal{D} - \mathcal{T}^{2}} + i\mathcal{T} + \tfrac{2-s}{4}\Bigr)}  Y_{j,k} \\
=& \frac{\Gamma\Bigl(\tfrac{j+k+n}{2} + \tfrac{j-k}{2} + \tfrac{2+s}{4}\Bigr)
      \Gamma\Bigl(\tfrac{j+k+n}{2} - \tfrac{j-k}{2} + \tfrac{2+s}{4}\Bigr)}
     {\Gamma\Bigl(\tfrac{j+k+n}{2} + \tfrac{j-k}{2} + \tfrac{2-s}{4}\Bigr)
      \Gamma\Bigl(\tfrac{j+k+n}{2} - \tfrac{j-k}{2} + \tfrac{2-s}{4}\Bigr)}  Y_{j,k} \\
=& \frac{\Gamma\bigl(\tfrac{Q_n+s}{4} + j\bigr)}{\Gamma\bigl(\tfrac{Q_n-s}{4} + j\bigr)}\;
   \frac{\Gamma\bigl(\tfrac{Q_n+s}{4} + k\bigr)}{\Gamma\bigl(\tfrac{Q_n-s}{4} + k\bigr)}
   Y_{j,k} \\
=& \mathcal{A}_{s,\mathbb{S}^{2n+1}} Y_{j,k},
\end{align*}
which establishes \eqref{As}.

Observe also the relation
\[
-\tfrac{1}{4}\Delta_{\mathbb{S}^{2n+1}} = -\mathcal{L} - \mathcal{T}^{2},
\]
where \(\Delta_{\mathbb{S}^{2n+1}}\) is the Laplace--Beltrami operator on \(\mathbb{S}^{2n+1}\). Indeed, for any \(Y_{j,k} \in \mathcal{H}_{j,k}\),
\begin{align*}
\bigl(-\mathcal{L} - \mathcal{T}^{2}\bigr) Y_{j,k}
=& \Bigl[\bigl(j + \tfrac{n}{2}\bigr)\bigl(k + \tfrac{n}{2}\bigr) - \tfrac{n^{2}}{4} + \tfrac{(j-k)^{2}}{4}\Bigr] Y_{j,k} \\
=& \tfrac{1}{4}(j+k)(j+k+2n) = -\tfrac{1}{4}\Delta_{\mathbb{S}^{2n+1}} Y_{j,k}.
\end{align*}
Thus, an alternative representation of \(\mathcal{A}_{s,\mathbb{S}^{2n+1}}\) is
\[
\mathcal{A}_{s,\mathbb{S}^{2n+1}} =
\frac{\Gamma\Bigl(\tfrac{1}{2}\sqrt{-\Delta_{\mathbb{S}^{2n+1}}+n^{2}} - i\mathcal{T} + \tfrac{2+s}{4}\Bigr)
      \Gamma\Bigl(\tfrac{1}{2}\sqrt{-\Delta_{\mathbb{S}^{2n+1}}+n^{2}} + i\mathcal{T} + \tfrac{2+s}{4}\Bigr)}
     {\Gamma\Bigl(\tfrac{1}{2}\sqrt{-\Delta_{\mathbb{S}^{2n+1}}+n^{2}} - i\mathcal{T} + \tfrac{2-s}{4}\Bigr)
      \Gamma\Bigl(\tfrac{1}{2}\sqrt{-\Delta_{\mathbb{S}^{2n+1}}+n^{2}} + i\mathcal{T} + \tfrac{2-s}{4}\Bigr)}.
\]

In a ground-breaking work \cite{fl2}, Frank and Lieb proved the following sharp Hardy--Littlewood--Sobolev inequality on \(\mathbb{S}^{2n+1}\):
\begin{theorem}\cite{fl2} \label{HLS2}
Let $0 < \lambda < Q_n = 2n + 2$ and $p= 2Q_n/(2Q_n - \lambda)$. Then for any $f,g \in L^p(\mathbb{S}^{2n+1})$
\[
\left| \iint_{\mathbb{S}^{2n+1} \times \mathbb{S}^{2n+1}} \frac{\overline{f(\zeta)} g(\eta)}{|1 - \zeta \cdot \overline{\eta}|^{\lambda/2}} d\zeta d\eta \right|
\leq \left( \frac{2\pi^{n+1}}{n!} \right)^{\lambda/Q_n} \frac{n! \Gamma\bigl((Q_n - \lambda)/2\bigr)}{\Gamma^2\bigl((2Q_n - \lambda)/4\bigr)} \|f\|_p \|g\|_p
\]
with equality if and only if
\[
f(\zeta) = \frac{c}{|1 - \overline{\xi} \cdot \zeta|^{(2Q_n-\lambda)/2}}, \quad g(\zeta) = \frac{c'}{|1 - \overline{\xi} \cdot \zeta|^{(2Q_n-\lambda)/2}},
\]
for some $c, c' \in \mathbb{C}$ and some $\xi \in \mathbb{C}^{n+1}$ with $|\xi| < 1$.
\end{theorem}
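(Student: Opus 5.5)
The plan is to follow the strategy of Frank and Lieb: reduce to a quadratic maximization problem, show that a maximizer exists, and then use conformal invariance together with a second-variation argument to show that a suitably normalized maximizer is constant. Write $K_\lambda(\zeta,\eta)=|1-\zeta\cdot\overline{\eta}|^{-\lambda/2}$. The constant and the optimizers then follow by evaluating at $f=g=1$ and transporting by the automorphisms of the ball.

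\textbf{Reductions.} First, since $K_\lambda>0$, replacing $f,g$ by $|f|,|g|$ only increases the left side, so I may assume $f,g\ge 0$. Second, I diagonalize $K_\lambda$ on the decomposition $\bigoplus\mathcal{H}_{j,k}$. The kernel depends only on $\zeta\cdot\overline{\eta}$ and is $U(n+1)$-invariant, so by a Funk--Hecke type formula for the CR sphere it acts on $\mathcal{H}_{j,k}$ by a scalar. I expect
\[
E_{j,k}=c_{n,\lambda}\,\frac{\Gamma\bigl(j+\frac{\lambda}{4}\bigr)\Gamma\bigl(k+\frac{\lambda}{4}\bigr)}{\Gamma\bigl(j+\frac{2Q_n-\lambda}{4}\bigr)\Gamma\bigl(k+\frac{2Q_n-\lambda}{4}\bigr)},
\]
which I would obtain by expanding $(1-x)^{-\lambda/4}(1-\bar x)^{-\lambda/4}$ in powers of $x=\zeta\cdot\overline{\eta}$ and $\bar x$ and projecting onto zonal harmonics. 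These eigenvalues are positive, so the kernel is positive definite and Cauchy--Schwarz gives
\[
\Bigl|\iint \overline{f}\,K_\lambda\, g\Bigr|\le \Bigl(\iint \overline{f}\,K_\lambda\, f\Bigr)^{1/2}\Bigl(\iint \overline{g}\,K_\lambda\, g\Bigr)^{1/2}.
\]
This reduces the theorem to the diagonal case $f=g\ge 0$.

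\textbf{Existence of a maximizer.} Transfer the problem to $\mathbb{H}^n$ using the Cayley transform and (2.3). The kernel becomes $|u^{-1}v|^{-\lambda}$ and the problem becomes invariant under left translations and the dilations $\delta_r$. Applying Lions' concentration-compactness to a maximizing sequence modulo these symmetries, using the strict subadditivity that comes from homogeneity and $p>1$, yields an optimizer $h\ge 0$. Its Euler--Lagrange equation is $K_\lambda h = C\,h^{p-1}$.

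\textbf{Normalization and the rigidity step.} Composing with a ball automorphism $\phi_\xi$ and the appropriate Jacobian weight preserves both sides. A degree argument for $\xi\mapsto\int \zeta\,(h_\xi)^p$ on the open unit ball then lets me assume
\[
\int_{\mathbb{S}^{2n+1}}\zeta_j\, h^p=0\qquad\text{for all } j.
\]
Next I take the second variation at $h$ in the directions $\zeta_j h$ and $\overline{\zeta}_j h$, which are admissible by this normalization, and sum over $j$ using $\sum_j|\zeta_j|^2=1$. Using the eigenvalues $E_{j,k}$, specifically the ratios $E_{j+1,k}/E_{j,k}$ and $E_{j,k+1}/E_{j,k}$, I would rewrite the resulting inequality through the commutators $[\zeta_j,K_\lambda]$ and $[\overline{\zeta}_j,K_\lambda]$. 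The aim is an inequality that forces $h$ to have no components in $\mathcal{H}_{j,k}$ with $j+k\ge1$, so that $h$ is constant.

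\textbf{Constant and equality cases.} With $h$ constant, the sharp constant is $|\mathbb{S}^{2n+1}|^{-2/p}$ times
\[
\iint_{\mathbb{S}^{2n+1}\times\mathbb{S}^{2n+1}}|1-\zeta\cdot\overline{\eta}|^{-\lambda/2}\,d\zeta\, d\eta,
\]
which is computed by polar integration in $x=\zeta\cdot\overline{\eta}$. Equality cases are the conformal images of constants, namely $|1-\overline{\xi}\cdot\zeta|^{-(2Q_n-\lambda)/2}$, and $f$ and $g$ must be proportional because of the equality case in Cauchy--Schwarz.

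The main obstacle is the rigidity step. It requires a precise commutator identity and a careful comparison of the eigenvalue ratios, exploiting the bidegree structure $(j,k)$, and this is where the CR case genuinely differs from the Euclidean case. If that identity is hard to establish directly, I would first verify it on a single pair $\mathcal{H}_{j,k}$ and $\mathcal{H}_{j+1,k}$, since multiplication by $\zeta_j$ only moves between neighbouring bidegrees.
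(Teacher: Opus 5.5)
The paper does not prove this statement (it is quoted from Frank--Lieb \cite{fl2}), and your outline is essentially their argument: existence of an optimizer modulo left translations and dilations on $\mathbb{H}^n$, center-of-mass normalization by ball automorphisms, second variation along $\zeta_j h$ and $\overline{\zeta}_j h$, and an eigenvalue comparison on the $\mathcal{H}_{j,k}$. The step you flag as the obstacle closes without any commutator identity: apply the real second variation to $h\operatorname{Re}\zeta_j$ and $h\operatorname{Im}\zeta_j$ separately (both are orthogonal to $h^{p-1}$ by your normalization, whereas a direct complex perturbation only yields the weaker weight $(p-1)(\operatorname{Re}u)^2+(\operatorname{Im}u)^2$), and combine $\zeta_i\mathcal{H}_{j,k}\subset\mathcal{H}_{j+1,k}\oplus\mathcal{H}_{j,k-1}$, the identity $\sum_i|\zeta_i|^2=1$ (which gives $\sum_i\|\zeta_iY\|_2^2=\|Y\|_2^2$ and cancels all cross terms between different bidegrees), and the ratios $E_{j+1,k}/E_{j,k}=(j+\frac{\lambda}{4})/(j+n+1-\frac{\lambda}{4})\ge p-1$ (equality iff $j=0$) and $E_{j,k-1}/E_{j,k}>1>p-1$, to get $\sum_i\bigl(\langle\zeta_ih,K_\lambda\zeta_ih\rangle+\langle\overline{\zeta}_ih,K_\lambda\overline{\zeta}_ih\rangle\bigr)\ge 2(p-1)\langle h,K_\lambda h\rangle$ with equality only for constant $h$, which against the bound $\le 2(p-1)\langle h,K_\lambda h\rangle$ from the second variation forces $h$ to be constant.
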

Following an argument of Beckner (see \cite[Theorem 6]{be1}), one can  deduce the  following sharp Sobolev inequality on \(\mathbb{S}^{2n+1}\) (see \cite{fl2}, (5.13)):
\begin{align}\label{Sobolev inequalities1}
\int_{\mathbb{S}^{2n+1}} |\mathcal{A}_{s,\mathbb{S}^{2n+1}}^{1/2}F|^{2} \geq& \left( \frac{2\pi^{n+1}}{n!} \right)^{s/Q_n} \frac{\Gamma^{2}(\frac{Q_n +s}{4})}{\Gamma^2(\frac{Q_n-s}{4})} \|F\|_{L^{\frac{2Q_n}{Q_n-s}}(\mathbb{S}^{2n+1})}^{2},\\
\nonumber
&\quad  0<s<Q_n,\; F\in W^{s,2}(\mathbb{S}^{2n+1}),
\end{align}
with equality if and only if
\[
F(\zeta) = \frac{c}{|1 - \overline{\xi} \cdot \zeta|^{(Q_n - s)/2}}, \qquad
c \in \mathbb{R},\; \xi \in \mathbb{C}^{n+1},\; |\xi| < 1,\; \zeta \in \mathbb{S}^{2n+1}.
\]

On the other hand, from \eqref{2.6} we obtain the identity
\begin{align}\label{a2.6}
\int_{\mathbb{H}^{n}}\Bigl|\mathcal{L}_{s,\mathbb{H}^{n}}^{1/2}
\bigl( (2|J_{\mathscr{C}_{n}}|)^{\frac{Q_n-s}{2Q_n}} (F \circ \mathscr{C}_{n}) \bigr)\Bigr|^{2}
= 2\int_{\mathbb{S}^{2n+1}} |\mathcal{A}_{s,\mathbb{S}^{2n+1}}^{1/2} F|^{2},
\;\; F \in W^{s,2}(\mathbb{S}^{2n+1}).
\end{align}
Consequently, the sharp Sobolev inequality on \(\mathbb{H}^{n}\) reads (see \cite{Cha} for $0<s\leq 1$)
\begin{align}\label{Sobolev inequalities2}
\int_{\mathbb{H}^{n}}|\mathcal{L}_{s,\mathbb{H}^{n}}^{1/2}f|^{2}
\geq& 2^{s/Q_n} \left( \frac{2\pi^{n+1}}{n!} \right)^{s/Q_n}
\frac{\Gamma^{2}\bigl(\frac{Q_n + s}{4}\bigr)}{\Gamma^{2}\bigl(\frac{Q_n - s}{4}\bigr)}
\|f\|_{L^{\frac{2Q_n}{Q_n - s}}(\mathbb{H}^{n})}^{2},\\
\nonumber
&\qquad 0 < s < Q_n,\; f \in W^{s,2}(\mathbb{H}^{n}),
\end{align}
with equality if and only if
\[
f(u) = c \Bigl\{ J_{\mathscr{C}_{n}}\bigl(\delta_{r}(v^{-1}u)\bigr) \Bigr\}^{\frac{Q_n - s}{2Q_n}}, \qquad
c \in \mathbb{C},\; r > 0,\; v \in \mathbb{H}^n.
\]
For other fractional subelliptic Sobolev inequalities on $\mathbb{H}^{n}$, we refer to \cite{Gho} and the references therein.

\section{Proof of Theorem \ref{th1.3}, \ref{th1.4}, \ref{th1.5} and \ref{th1.7}}

Define
\[
k_{s} := \mathcal{L}_{s,\mathbb{H}^{n}}^{-1/2}\delta_0, \qquad 0<s<Q_n,
\]
where \(\delta_0\) denotes the Dirac delta at the origin.
Frank and Lieb (see \cite{fl2}, Proposition 4.1) proved that \(k_s\) is real-valued, even (i.e., \(k_s(u^{-1})=k_s(u)\) for all \(u\in\mathbb{H}^n\)), and homogeneous of degree \((s-2Q_n)/2\) in the sense that
\[
k_s(\delta_r u) = r^{\frac{s-2Q_n}{2}} k_s(u), \qquad u\in\mathbb{H}^n,\; r>0.
\]
Moreover, \(k_s\) belongs to the weak Lebesgue space \(L^{2Q_n/(2Q_n-s)}_w(\mathbb{H}^n)\).
Note that (see \cite{Co,ron1})
\begin{align}\label{3.1}
(\mathcal{L}_{s,\mathbb{H}^{n}}^{-1}\delta_0)(u) = a_{n,s} |u|^{s-Q_n}, \qquad
a_{n,s} = \frac{2^{n-1-\frac{s}{2}}\Gamma^2\bigl(\frac{Q_n-s}{4}\bigr)}{\pi^{n+1}\Gamma(\frac{s}{2})},
\end{align}
from which one deduces the convolution identity
\begin{align}\label{3.2}
\int_{\mathbb{H}^n} k_s(u^{-1}w)k_s(v^{-1}w)dw = a_{n,s}|u^{-1}v|^{s-Q_n}, \qquad u,v\in\mathbb{H}^n.
\end{align}

Following \cite{bez}, we introduce the linear operator
\[
\mathcal{S}_{m}: C_{0}^{\infty}(\mathbb{H}^{n})\subset L^2(\mathbb{H}^{n}) \longrightarrow L^2(\mathbb{H}^{n-m})
\]
defined by
\[
\mathcal{S}_{m}f = \mathcal{L}_{s-2m,\mathbb{H}^{n-m}}^{1/2}\mathcal{R}_{m}
\mathcal{L}_{s,\mathbb{H}^{n}}^{-1/2}f
= \mathcal{L}_{s-2m,\mathbb{H}^{n-m}}^{1/2}\mathcal{R}_{m}(f\ast k_s),
\qquad f\in C_{0}^{\infty}(\mathbb{H}^{n}).
\]
For simplicity, we restrict our attention to real-valued functions in what follows.
Let \(\mathcal{S}_{m}^{*}: L^2(\mathbb{H}^{n-m})\to L^2(\mathbb{H}^{n})\) be the adjoint operator of \(\mathcal{S}_{m}\). Then
\[
\mathcal{S}_{m}^{*}g(u)=\int_{\mathbb{H}^{n-m}}k_s(v^{-1}u)
\mathcal{L}_{s-2m,\mathbb{H}^{n-m}}^{1/2}g(v)dv,\qquad g\in L^2(\mathbb{H}^{n-m}).
\]
Consequently,
\begin{align}\nonumber
\mathcal{S}_{m}\mathcal{S}_{m}^{*}g(u)
=& \mathcal{L}_{s-2m,\mathbb{H}^{n-m}}^{1/2}\mathcal{R}_{m}
\int_{\mathbb{H}^{n-m}}(\mathcal{L}_{s,\mathbb{H}^{n}}^{-1/2}k_s)(v^{-1}u)
\mathcal{L}_{s-2m,\mathbb{H}^{n-m}}^{1/2}g(v)dv \\
\label{3.3}
=& a_{n,s}\mathcal{L}_{s-2m,\mathbb{H}^{n-m}}^{1/2}\mathcal{R}_{m}
\int_{\mathbb{H}^{n-m}}|v^{-1}u|^{s-Q_n}
\mathcal{L}_{s-2m,\mathbb{H}^{n-m}}^{1/2}g(v)dv.
\end{align}
Inserting (\ref{3.1}) into (\ref{3.3}) gives
\begin{align}\nonumber
\mathcal{S}_{m}\mathcal{S}_{m}^{*}g
=& a_{n,s}\mathcal{L}_{s-2m,\mathbb{H}^{n-m}}^{1/2}
\Bigl(\frac{1}{a_{n-m,s-2m}}
\mathcal{L}_{s-2m,\mathbb{H}^{n-m}}^{-1}
\mathcal{L}_{s-2m,\mathbb{H}^{n-m}}^{1/2}g\Bigr)\\
\nonumber
=&\frac{a_{n,s}}{a_{n-m,s-2m}}\; g= \frac{2^{n-1-\frac{s}{2}}\Gamma^2\bigl(\frac{Q_n-s}{4}\bigr)}
{\pi^{n+1}\Gamma(\frac{s}{2})}\;
\frac{\pi^{n-m+1}\Gamma(\frac{s-2m}{2})}
{2^{n-m-1-\frac{s-2m}{2}}\Gamma^2\bigl(\frac{Q_{n-m}-(s-2m)}{4}\bigr)}\; g\\
\label{3.4}
=& \frac{\Gamma\bigl(\frac{s-2m}{2}\bigr)}{\pi^{m}\Gamma\bigl(\frac{s}{2}\bigr)}\; g .
\end{align}

\begin{lemma}\label{lm4.1}
For any \(f\in C_{0}^{\infty}(\mathbb{H}^{n})\) and \(0<s< Q_n\),
\begin{align*}
\int_{\mathbb{H}^{n}}\bigl|\mathcal{L}_{s,\mathbb{H}^{n}}^{1/2}f\bigr|^{2}
=& \int_{\mathbb{H}^{n}}\Bigl|\mathcal{L}_{s,\mathbb{H}^{n}}^{1/2}\Bigl(f-
\frac{\pi^{m}\Gamma\bigl(\frac{s}{2}\bigr)}{\Gamma\bigl(\frac{s-2m}{2}\bigr)}
\mathcal{L}_{s,\mathbb{H}^{n}}^{-1/2}\mathcal{S}_{m}^{\ast}
\mathcal{L}_{s-2m,\mathbb{H}^{n-m}}^{1/2}\mathcal{R}_{m}f\Bigr)\Bigr|^{2} +\\
&\qquad \frac{\pi^{m}\Gamma\bigl(\frac{s}{2}\bigr)}{\Gamma\bigl(\frac{s-2m}{2}\bigr)}
\int_{\mathbb{H}^{n-m}}\bigl|\mathcal{L}_{s-2m,\mathbb{H}^{n-m}}^{1/2}\mathcal{R}_{m}f\bigr|^{2}.
\end{align*}
\end{lemma}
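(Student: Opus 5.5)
The identity is a Pythagorean decomposition coming from the fact that, by \eqref{3.4}, a constant multiple of $\mathcal{S}_m^{\ast}\mathcal{S}_m$ is an orthogonal projection on $L^2(\mathbb{H}^n)$. Throughout, set
$$c:=\frac{\pi^{m}\Gamma\bigl(\frac{s}{2}\bigr)}{\Gamma\bigl(\frac{s-2m}{2}\bigr)},$$
so that \eqref{3.4} reads $\mathcal{S}_m\mathcal{S}_m^{\ast}=c^{-1}\,\mathrm{Id}$ on $L^2(\mathbb{H}^{n-m})$. For $f\in C_0^\infty(\mathbb{H}^n)$ put
$$h:=\mathcal{L}_{s,\mathbb{H}^{n}}^{1/2}f, \qquad g:=\mathcal{L}_{s-2m,\mathbb{H}^{n-m}}^{1/2}\mathcal{R}_m f .$$
By the definition of $\mathcal{S}_m$ (namely $\mathcal{S}_m=\mathcal{L}_{s-2m,\mathbb{H}^{n-m}}^{1/2}\mathcal{R}_m\mathcal{L}_{s,\mathbb{H}^n}^{-1/2}$), we have $\mathcal{S}_m h=g$.

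Applying $\mathcal{L}_{s,\mathbb{H}^n}^{1/2}$ to the function inside the first integral on the right gives $h-c\,\mathcal{S}_m^{\ast}g=h-c\,\mathcal{S}_m^{\ast}\mathcal{S}_m h$. The claim is therefore equivalent to
$$\|h\|_2^2=\|h-c\,\mathcal{S}_m^{\ast}\mathcal{S}_m h\|_2^2+c\,\|\mathcal{S}_m h\|_2^2 .$$

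First I would settle the functional-analytic points. From \eqref{3.4}, $\|\mathcal{S}_m^{\ast}g\|_2^2=\langle \mathcal{S}_m\mathcal{S}_m^{\ast}g,g\rangle=c^{-1}\|g\|_2^2$. Hence $\mathcal{S}_m^{\ast}$ is bounded, so $\mathcal{S}_m$ extends to a bounded operator. I also need to check that $h$ and $g$ lie in $L^2$ for $f\in C_0^\infty(\mathbb{H}^n)$. For $h$ this is immediate. For $g$ it is needed for the adjoint computation to make sense; it follows from $g=\mathcal{S}_m h$ once $\mathcal{S}_m$ is bounded. This verification that the formal adjoint manipulations behind \eqref{3.3}--\eqref{3.4} are legitimate is the only delicate step. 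I would rely on the kernel representations \eqref{3.1}--\eqref{3.2} and on density of $C_0^\infty$.

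Next I expand $\|h\|_2^2=\|(h-c\,\mathcal{S}_m^{\ast}g)+c\,\mathcal{S}_m^{\ast}g\|_2^2$. The cross term vanishes:
$$\langle h-c\,\mathcal{S}_m^{\ast}g,\;c\,\mathcal{S}_m^{\ast}g\rangle=c\langle \mathcal{S}_m h,g\rangle-c^2\langle \mathcal{S}_m\mathcal{S}_m^{\ast}g,g\rangle=c\|g\|_2^2-c\|g\|_2^2=0 .$$
The last term is
$$\|c\,\mathcal{S}_m^{\ast}g\|_2^2=c^2\langle\mathcal{S}_m\mathcal{S}_m^{\ast}g,g\rangle=c\,\|g\|_2^2=c\int_{\mathbb{H}^{n-m}}\bigl|\mathcal{L}_{s-2m,\mathbb{H}^{n-m}}^{1/2}\mathcal{R}_m f\bigr|^2 .$$
Substituting back $h$ and $g$ gives exactly the asserted identity.
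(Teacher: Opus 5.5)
Your proposal is correct and is essentially the paper's proof. With $c=\pi^{m}\Gamma(\frac{s}{2})/\Gamma(\frac{s-2m}{2})$ and $h=\mathcal{L}_{s,\mathbb{H}^{n}}^{1/2}f$, both arguments reduce the claim to $\|h\|_2^2=\|h-c\,\mathcal{S}_m^{\ast}\mathcal{S}_m h\|_2^2+c\,\|\mathcal{S}_m h\|_2^2$ and derive it solely from $\mathcal{S}_m\mathcal{S}_m^{\ast}=c^{-1}\,\mathrm{Id}$ in \eqref{3.4}, differing only in presentation (the paper expands $\|h-c\,\mathcal{S}_m^{\ast}\mathcal{S}_m h\|_2^2$ directly rather than showing a cross term vanishes), while your remark that \eqref{3.4} forces $\mathcal{S}_m^{\ast}$ to be bounded supplies a justification the paper leaves implicit.
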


\begin{proof}
Set \(h=\mathcal{L}_{s,\mathbb{H}^{n}}^{1/2}f\); then \(f=\mathcal{L}_{s,\mathbb{H}^{n}}^{-1/2}h=h\ast k_s\). We compute
\begin{align*}
&\int_{\mathbb{H}^{n}}\Bigl|\mathcal{L}_{s,\mathbb{H}^{n}}^{1/2}\Bigl(f-
\frac{\pi^{m}\Gamma\bigl(\frac{s}{2}\bigr)}{\Gamma\bigl(\frac{s-2m}{2}\bigr)}
\mathcal{L}_{s,\mathbb{H}^{n}}^{-1/2}\mathcal{S}_{m}^{\ast}
\mathcal{L}_{s-2m,\mathbb{H}^{n-m}}^{1/2}\mathcal{R}_{m}f\Bigr)\Bigr|^{2} \\
=& \int_{\mathbb{H}^{n}}\Bigl|h-
\frac{\pi^{m}\Gamma\bigl(\frac{s}{2}\bigr)}{\Gamma\bigl(\frac{s-2m}{2}\bigr)}
\mathcal{S}_{m}^{\ast}\mathcal{L}_{s-2m,\mathbb{H}^{n-m}}^{1/2}
\mathcal{R}_{m}\mathcal{L}_{s,\mathbb{H}^{n}}^{-1/2}h\Bigr|^{2} \\
=& \int_{\mathbb{H}^{n}}\Bigl|h-
\frac{\pi^{m}\Gamma\bigl(\frac{s}{2}\bigr)}{\Gamma\bigl(\frac{s-2m}{2}\bigr)}
\mathcal{S}_{m}^{\ast}\mathcal{S}_{m}h\Bigr|^{2} \\
=& \int_{\mathbb{H}^{n}}|h|^{2}
+ \Bigl(\frac{\pi^{m}\Gamma\bigl(\frac{s}{2}\bigr)}{\Gamma\bigl(\frac{s-2m}{2}\bigr)}\Bigr)^{2}
\int_{\mathbb{H}^{n}}|\mathcal{S}_{m}^{\ast}\mathcal{S}_{m}h|^{2}
- 2\frac{\pi^{m}\Gamma\bigl(\frac{s}{2}\bigr)}{\Gamma\bigl(\frac{s-2m}{2}\bigr)}
\int_{\mathbb{H}^{n}}h\mathcal{S}_{m}^{\ast}\mathcal{S}_{m}h \\
=& \int_{\mathbb{H}^{n}}|h|^{2}
+ \Bigl(\frac{\pi^{m}\Gamma\bigl(\frac{s}{2}\bigr)}{\Gamma\bigl(\frac{s-2m}{2}\bigr)}\Bigr)^{2}
\int_{\mathbb{H}^{n}}|\mathcal{S}_{m}^{\ast}\mathcal{S}_{m}h|^{2}
- 2\frac{\pi^{m}\Gamma\bigl(\frac{s}{2}\bigr)}{\Gamma\bigl(\frac{s-2m}{2}\bigr)}
\int_{\mathbb{H}^{n-m}}|\mathcal{S}_{m}h|^{2}.
\end{align*}
On the other hand, using \eqref{3.4},
\[
\int_{\mathbb{H}^{n}}|\mathcal{S}_{m}^{\ast}\mathcal{S}_{m}h|^{2}
= \int_{\mathbb{H}^{n-m}}\mathcal{S}_{m}\mathcal{S}_{m}^{\ast}\mathcal{S}_{m}h\cdot\mathcal{S}_{m}h
= \frac{\Gamma\bigl(\frac{s-2m}{2}\bigr)}{\pi^{m}\Gamma\bigl(\frac{s}{2}\bigr)}
\int_{\mathbb{H}^{n-m}}|\mathcal{S}_{m}h|^{2}.
\]
Hence,
\begin{align*}
&\int_{\mathbb{H}^{n}}\Bigl|\mathcal{L}_{s,\mathbb{H}^{n}}^{1/2}f-
\frac{\pi^{m}\Gamma\bigl(\frac{s}{2}\bigr)}{\Gamma\bigl(\frac{s-2m}{2}\bigr)}
\mathcal{S}_{m}^{\ast}\mathcal{L}_{s-2m,\mathbb{H}^{n-m}}^{1/2}\mathcal{R}_{m}f\Bigr|^{2} \\
=& \int_{\mathbb{H}^{n}}|h|^{2}
- \frac{\pi^{m}\Gamma\bigl(\frac{s}{2}\bigr)}{\Gamma\bigl(\frac{s-2m}{2}\bigr)}
\int_{\mathbb{H}^{n-m}}|\mathcal{S}_{m}h|^{2},
\end{align*}
which is precisely the statement of the lemma.
\end{proof}

Now  we derive an explicit expression for
\[
\frac{\pi^{m}\Gamma\bigl(\frac{s}{2}\bigr)}{\Gamma\bigl(\frac{s-2m}{2}\bigr)}
\mathcal{L}_{s,\mathbb{H}^{n}}^{-1/2}\mathcal{S}_{m}^{\ast}\mathcal{L}_{s-2m,\mathbb{H}^{n-m}}^{1/2}\mathcal{R}_{m}f.
\]
First,
\begin{align}\nonumber
&\mathcal{L}_{s,\mathbb{H}^{n}}^{-1/2}\mathcal{S}_{m}^{\ast}\mathcal{L}_{s-2m,\mathbb{H}^{n-m}}^{1/2}\mathcal{R}_{m}f(u) \\
\nonumber
=& \mathcal{L}_{s,\mathbb{H}^{n}}^{-1/2}\int_{\mathbb{H}^{n-m}}k_s(v^{-1}u)
\mathcal{L}_{s-2m,\mathbb{H}^{n-m}}\mathcal{R}_{m}f(v)dv \\
\nonumber
=& \int_{\mathbb{H}^{n-m}}(\mathcal{L}_{s,\mathbb{H}^{n}}^{-1/2}k_s)(v^{-1}u)
\mathcal{L}_{s-2m,\mathbb{H}^{n-m}}\mathcal{R}_{m}f(v)dv \\
\label{4.2}
=& a_{n,s}\int_{\mathbb{H}^{n-m}}|v^{-1}u|^{s-Q_n}
\mathcal{L}_{s-2m,\mathbb{H}^{n-m}}\mathcal{R}_{m}f(v)dv .
\end{align}
Next,  we compute \(\mathcal{L}_{s-2m,\mathbb{H}^{n-m}}|v^{-1}u|^{s-Q_n}\).
For simplicity, write \(u=(z',z'',t)\) and \(v=(w',w'',\mu)\) with \(w''=0\). Then
\begin{align*}
|u^{-1}v|^{s-Q_n} =& \bigl[(|z''|^{2}+|-z'+w'|^{2})^{2}+(-t+\mu-2\operatorname{Im} z' \cdot \overline{w}')^{2}\bigr]^{\frac{s-Q_n}{4}} .
\end{align*}
If we set \(\widetilde{u}=(z',t)\) and \(\widetilde{v}=(w',\mu)\), then
\begin{align*}
|u^{-1}v|^{s-Q_n}
=& |z''|^{s-Q_n}
\Bigl[\Bigl(1+\frac{|-z'+w'|^{2}}{|z''|^{2}}\Bigr)^{2}
+ \frac{1}{|z''|^{4}}\bigl(-t+\mu-2\operatorname{Im} z' \cdot \overline{w}'\bigr)^{2}\Bigr]^{\frac{s-Q_n}{4}} \\
=& |z''|^{s-Q_n}
\Bigl[2^{-1-2(n-m)}J_{\mathscr{C}_{n-m}}\bigl(\delta_{\frac{1}{|z''|}}(\widetilde{u}^{-1}\widetilde{v})\bigr)\Bigr]
^{\frac{Q_{n-m}-(s-2m)}{2Q_{n-m}}} .
\end{align*}
Using (\ref{1.4}) and (\ref{2.7}), we obtain
\begin{align}\nonumber
&\mathcal{L}_{s-2m,\mathbb{H}^{n-m}}|u^{-1}v|^{s-Q_n} \\
\nonumber
=& \frac{\Gamma^{2}\bigl(\frac{Q_{n-m}+s-2m}{4}\bigr)}
{\Gamma^{2}\bigl(\frac{Q_{n-m}-(s-2m)}{4}\bigr)}
|z''|^{s-Q_n}\frac{1}{|z''|^{s-2m}}
\Bigl[2^{-2-2(n-m)}\Bigr]^{\frac{Q_{n-m}-(s-2m)}{2Q_{n-m}}}\\
\nonumber
&\qquad \times \bigl[2J_{\mathscr{C}_{n-m}}(\delta_{\frac{1}{|z''|}}(\widetilde{u}^{-1}\widetilde{v}))\bigr]^{\frac{Q_{n-m}+(s-2m)}{2Q_{n-m}}}\\
\nonumber
=& \frac{\Gamma^{2}\bigl(\frac{Q_{n}+s-4m}{4}\bigr)}
{\Gamma^{2}\bigl(\frac{Q_{n}-s}{4}\bigr)}
\frac{2^{s-2m}|z''|^{s-2m}}
{\bigl[(|z''|^{2}+|-z'+w'|^{2})^{2}+(-t+\mu-2\operatorname{Im} z' \cdot \overline{w}')^{2}\bigr]^{\frac{Q_n-4m+s}{4}}}\\
\label{3.5}
=& 2^{s-2m}\frac{\Gamma^{2}\bigl(\frac{Q_{n}+s-4m}{4}\bigr)}
{\Gamma^{2}\bigl(\frac{Q_{n}-s}{4}\bigr)}
|z''|^{s-2m}|u^{-1}v|^{4m-Q_n-s}.
\end{align}
Substituting (\ref{3.5}) into (\ref{4.2}) yields
\begin{align}\nonumber
&\frac{\pi^{m}\Gamma\bigl(\frac{s}{2}\bigr)}{\Gamma\bigl(\frac{s-2m}{2}\bigr)}
\mathcal{L}_{s,\mathbb{H}^{n}}^{-1/2}\mathcal{S}_{m}^{\ast}
\mathcal{L}_{s-2m,\mathbb{H}^{n-m}}^{1/2}\mathcal{R}_{m}f(u)\\
\nonumber
=& a_{n,s}\frac{\pi^{m}\Gamma\bigl(\frac{s}{2}\bigr)}{\Gamma\bigl(\frac{s-2m}{2}\bigr)}
2^{s-2m}\frac{\Gamma^{2}\bigl(\frac{Q_{n}+s-4m}{4}\bigr)}
{\Gamma^{2}\bigl(\frac{Q_{n}-s}{4}\bigr)}
|z''|^{s-2m}\int_{\mathbb{H}^{n-m}} |u^{-1}v|^{4m-Q_n-s}\mathcal{R}_{m}f(v)dv\\
\nonumber
=& \frac{2^{n-1-\frac{s}{2}}\Gamma^{2}\bigl(\frac{Q_n-s}{4}\bigr)}
{\pi^{n+1}\Gamma(\frac{s}{2})}
\frac{\pi^{m}\Gamma\bigl(\frac{s}{2}\bigr)}{\Gamma\bigl(\frac{s-2m}{2}\bigr)}
2^{s-2m}\frac{\Gamma^{2}\bigl(\frac{Q_{n}+s-4m}{4}\bigr)}
{\Gamma^{2}\bigl(\frac{Q_{n}-s}{4}\bigr)}
|z''|^{s-2m}
\int_{\mathbb{H}^{n-m}} |u^{-1}v|^{4m-Q_n-s}\mathcal{R}_{m}f(v)dv\\
=& \frac{2^{n+\frac{s}{2}-2m-1}\Gamma^{2}\bigl(\frac{Q_n+s-4m}{4}\bigr)}
{\pi^{n-m+1}\Gamma\bigl(\frac{s-2m}{2}\bigr)}
|z''|^{s-2m}
\int_{\mathbb{H}^{n-m}} |u^{-1}v|^{4m-Q_n-s}\mathcal{R}_{m}f(v)dv.
\label{c3.7}
\end{align}
This leads to the following lemma:
\begin{lemma}\label{lm4.2}
Let \(2m < s < Q_n\). For any \(f \in W^{s,2}(\mathbb{H}^{n})\), define
\[
\mathcal{P}_{s,m}(\mathcal{R}_{m}f)(u)=\frac{2^{n+\frac{s}{2}-2m-1}\Gamma^{2}\bigl(\frac{Q_n+s-4m}{4}\bigr)}
{\pi^{n-m+1}\Gamma\bigl(\frac{s-2m}{2}\bigr)}
|z''|^{s-2m}\int_{\mathbb{H}^{n-m}} |u^{-1}v|^{4m-Q_n-s}\mathcal{R}_{m}f(v)dv .
\]
Then
\begin{align}\nonumber
\int_{\mathbb{H}^{n}}\bigl|\mathcal{L}_{s,\mathbb{H}^{n}}^{1/2}f\bigr|^{2}
=& \int_{\mathbb{H}^{n}}\Bigl|\mathcal{L}_{s,\mathbb{H}^{n}}^{1/2}\bigl(f-
P_{s,m}(\mathcal{R}_{m}f)\bigr)\Bigr|^{2} \\
\label{b3.6}
&\qquad+ \frac{\pi^{m}\Gamma\bigl(\frac{s}{2}\bigr)}{\Gamma\bigl(\frac{s-2m}{2}\bigr)}
\int_{\mathbb{H}^{n-m}}\bigl|\mathcal{L}_{s-2m,\mathbb{H}^{n-m}}^{1/2}\mathcal{R}_{m}f\bigr|^{2}.
\end{align}

\end{lemma}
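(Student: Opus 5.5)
Lemma \ref{lm4.2} follows by combining Lemma \ref{lm4.1} with the kernel computation \eqref{c3.7}, and then passing from $C_0^\infty(\mathbb{H}^n)$ to $W^{s,2}(\mathbb{H}^n)$ by density.

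\textbf{Smooth case.} For $f\in C_0^\infty(\mathbb{H}^n)$, Lemma \ref{lm4.1} already gives the identity \eqref{b3.6}, with $P_{s,m}(\mathcal{R}_mf)$ replaced by
\[
\frac{\pi^{m}\Gamma(\frac{s}{2})}{\Gamma(\frac{s-2m}{2})}\mathcal{L}_{s,\mathbb{H}^{n}}^{-1/2}\mathcal{S}_m^*\mathcal{L}^{1/2}_{s-2m,\mathbb{H}^{n-m}}\mathcal{R}_mf .
\]
The chain \eqref{4.2}, \eqref{3.5}, \eqref{c3.7} shows that this expression equals $\mathcal{P}_{s,m}(\mathcal{R}_mf)(u)$. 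The step in \eqref{4.2} moves $\mathcal{L}_{s-2m,\mathbb{H}^{n-m}}$ from $\mathcal{R}_mf$ onto the kernel $|u^{-1}v|^{s-Q_n}$, viewed as a function of $v\in\mathbb{H}^{n-m}$. I will justify this by self-adjointness of $\mathcal{L}_{s-2m,\mathbb{H}^{n-m}}$, tested against the smooth, decaying function $v\mapsto|u^{-1}v|^{s-Q_n}$ for fixed $u$ with $z''\neq0$. Its decay is of order $|v|^{s-Q_n}$, and \eqref{3.5} shows that the image decays like $|v|^{4m-Q_n-s}$, which is integrable on $\mathbb{H}^{n-m}$ since $Q_n+s-4m>Q_{n-m}$, i.e.\ $s>2m$. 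So the smooth case is an identity once this pairing is justified; the set $z''=0$ has measure zero and can be ignored.

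\textbf{Extension to $W^{s,2}(\mathbb{H}^n)$.} Dropping the first, nonnegative term of \eqref{b3.6} gives
\[
\|\mathcal{L}^{1/2}_{s-2m,\mathbb{H}^{n-m}}\mathcal{R}_mf\|_2^2\le \frac{\Gamma(\frac{s-2m}{2})}{\pi^m\Gamma(\frac s2)}\|\mathcal{L}^{1/2}_{s,\mathbb{H}^n}f\|_2^2 .
\]
Equivalently, by \eqref{3.4}, $\mathcal{S}_m$ is a multiple of a partial isometry. Hence $\mathcal{R}_m$ extends uniquely to a bounded map $W^{s,2}(\mathbb{H}^n)\to W^{s-2m,2}(\mathbb{H}^{n-m})$, and the second term on the right of \eqref{b3.6} is continuous in $f$.

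For the first term, note that $\mathcal{L}^{1/2}_{s,\mathbb{H}^n}P_{s,m}(\mathcal{R}_mf)$ is a constant multiple of $\mathcal{S}_m^*\mathcal{S}_mh$ with $h=\mathcal{L}^{1/2}_{s,\mathbb{H}^n}f$. This is bounded on $L^2$, so $f\mapsto P_{s,m}(\mathcal{R}_mf)$ is bounded on $W^{s,2}(\mathbb{H}^n)$. Approximating $f$ by $f_k\in C_0^\infty$, both sides of \eqref{b3.6} converge.

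\textbf{Main obstacle.} The one delicate point is showing that the explicit integral formula for $\mathcal{P}_{s,m}$ agrees with the continuous extension for general $f$, i.e.\ that the integral converges for $\mathcal{R}_mf\in W^{s-2m,2}(\mathbb{H}^{n-m})$. I would handle this by noting that $\mathcal{R}_mf_k\to\mathcal{R}_mf$ in $L^{p}(\mathbb{H}^{n-m})$ with $p=\frac{2Q_{n-m}}{Q_{n-m}-(s-2m)}$, by the Sobolev inequality \eqref{Sobolev inequalities2}. For fixed $u$ with $z''\ne0$, the kernel $v\mapsto|u^{-1}v|^{4m-Q_n-s}$ is bounded and decays like $|v|^{4m-Q_n-s}$, so it lies in $L^{p'}$. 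Hence the integrals converge pointwise, which identifies the $L^2$-limit almost everywhere.
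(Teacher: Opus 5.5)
Your proposal is correct and follows the paper's proof. Lemma \ref{lm4.1} together with the kernel computation \eqref{c3.7} gives \eqref{b3.6} for $f\in C_0^\infty(\mathbb{H}^n)$, the resulting trace bound extends $\mathcal{R}_m$, and the identity passes to $W^{s,2}(\mathbb{H}^n)$ by density. Your extra justifications (moving $\mathcal{L}_{s-2m,\mathbb{H}^{n-m}}$ onto the kernel, continuity of the first term via $\mathcal{S}_m^*\mathcal{S}_m$, and identifying the integral formula for general $f$) fill in steps the paper leaves implicit. The only slip is that the a.e.\ identification should use convergence of $P_{s,m}(\mathcal{R}_mf_k)$ in $L^{2Q_n/(Q_n-s)}(\mathbb{H}^n)$ via \eqref{Sobolev inequalities2}, rather than an $L^2$-limit.
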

\begin{proof}
By Lemma \ref{lm4.1}, equality \eqref{b3.6} holds for every \(f \in C_{0}^{\infty}(\mathbb{H}^{n})\). Consequently,
\[
\frac{\pi^{m}\Gamma\bigl(\frac{s}{2}\bigr)}{\Gamma\bigl(\frac{s-2m}{2}\bigr)}
\int_{\mathbb{H}^{n-m}}\bigl|\mathcal{L}_{s-2m,\mathbb{H}^{n-m}}^{1/2}\mathcal{R}_{m}f\bigr|^{2}
\leq \int_{\mathbb{H}^{n}}\bigl|\mathcal{L}_{s,\mathbb{H}^{n}}^{1/2}f\bigr|^{2},
\qquad \forall f \in C_{0}^{\infty}(\mathbb{H}^{n}).
\]
Since \(C_{0}^{\infty}(\mathbb{H}^{n})\) is dense in \(W^{s,2}(\mathbb{H}^{n})\), the restriction operator \(\mathcal{R}_{m}\) extends to a bounded operator from \(W^{s,2}(\mathbb{H}^{n})\) to \(W^{s-2m,2}(\mathbb{H}^{n-m})\). By density, \eqref{b3.6} remains valid for all \(f \in W^{s,2}(\mathbb{H}^{n})\).
\end{proof}

\begin{remark}
It is straightforward to verify that \(\mathcal{P}_{s,m}(\mathcal{R}_{m}f)\) satisfies
\begin{align}\label{c3.8}
\mathcal{R}_{m}\mathcal{P}_{s,m}(\mathcal{R}_{m}f)=\mathcal{R}_{m}f,\quad f \in C_{0}^{\infty}(\mathbb{H}^{n}).
\end{align}
Indeed, using (\ref{c3.7}) and (\ref{3.4}) we obtain
\begin{align*}
\mathcal{L}_{s-2m,\mathbb{H}^{n-m}}^{1/2} \mathcal{R}_{m}\mathcal{P}_{s,m}(\mathcal{R}_{m}f)=&\frac{\pi^{m}\Gamma\bigl(\frac{s}{2}\bigr)}{\Gamma\bigl(\frac{s-2m}{2}\bigr)}
\mathcal{L}_{s-2m,\mathbb{H}^{n-m}}^{1/2} \mathcal{R}_{m}\mathcal{L}_{s,\mathbb{H}^{n}}^{-1/2}\mathcal{S}_{m}^{\ast}
\mathcal{L}_{s-2m,\mathbb{H}^{n-m}}^{1/2}\mathcal{R}_{m}f\\
=&\frac{\pi^{m}\Gamma\bigl(\frac{s}{2}\bigr)}{\Gamma\bigl(\frac{s-2m}{2}\bigr)}\mathcal{S}_{m}\mathcal{S}_{m}^{*}\mathcal{L}_{s-2m,\mathbb{H}^{n-m}}^{1/2}\mathcal{R}_{m}f\\
=&\mathcal{L}_{s-2m,\mathbb{H}^{n-m}}^{1/2}\mathcal{R}_{m}f,
\end{align*}
which immediately yields (\ref{c3.8}).
The same property can be checked in an analogous way for the operators \(\widetilde{\mathcal{P}}_{s,m}(\widetilde{\mathcal{R}}_{m}F)\), \(\mathcal{P}'_{s,m}(\mathcal{R}_{m}f)\),  \(\mathcal{Q}_{s,m}(\tau_{m}f)\) and $\widetilde{\mathcal{Q}}_{s,m}(\widetilde{\tau}_{m}F)$ that appear later in the paper.
\end{remark}

\textbf{Proof of Theorem \ref{th1.3}.} The theorem follows directly by combining the Sobolev inequality \eqref{Sobolev inequalities2} with Lemma \ref{lm4.2}.

\vspace{0.2cm}

We now proceed to the proof of Theorem \ref{th1.4}. To this end, we first establish the following lemma.

\begin{lemma}\label{lm4.3}
Let \(2m < s < Q_n\). For any \(F \in W^{s,2}(\mathbb{S}^{2n+1})\), define
\begin{align}\label{b3.2}
\widetilde{\mathcal{P}}_{s,m}(\widetilde{\mathcal{R}}_{m}F)
=
\frac{\Gamma^{2}\bigl(\frac{Q_n+s-4m}{4}\bigr)}
{2\pi^{n-m+1}\Gamma\bigl(\frac{s-2m}{2}\bigr)}
\int_{\mathbb{S}^{2(n-m)+1}} \frac{\bigl(\sum\limits_{j=n-m+2}^{n+1}|\zeta_{j}|^{2}\bigr)^{\frac{s-2m}{2}}}
{|1-\zeta\cdot\overline{\eta}|^{\frac{Q_n+s-4m}{2}}}
\widetilde{\mathcal{R}}_{m}F(\eta)d\eta .
\end{align}
Then
\begin{align}\nonumber
\int_{\mathbb{S}^{2n+1}}\bigl|\mathcal{A}_{s,\mathbb{S}^{2n+1}}^{1/2}F\bigr|^{2}
=& \int_{\mathbb{S}^{2n+1}}\Bigl|\mathcal{A}_{s,\mathbb{S}^{2n+1}}^{1/2}\bigl(F-
\widetilde{\mathcal{P}}_{s,m}(\widetilde{\mathcal{R}}_{m}F)\bigr)\Bigr|^{2} \\
\label{b4.3}
&\quad + \frac{\pi^{m}\Gamma\bigl(\frac{s}{2}\bigr)}{\Gamma\bigl(\frac{s-2m}{2}\bigr)}
\int_{\mathbb{S}^{2(n-m)+1}}\bigl|\mathcal{A}_{s-2m,\mathbb{S}^{2(n-m)+1}}^{1/2}
\widetilde{\mathcal{R}}_{m}F\bigr|^{2}.
\end{align}
Moreover, $\widetilde{\mathcal{P}}_{s,m}(\widetilde{\mathcal{R}}_{m}F)$ satisfies
\begin{align}\nonumber
\widetilde{\mathcal{P}}_{s,m}(\widetilde{\mathcal{R}}_{m}F)
= &\frac{2^{n-1-\frac{s}{2}}\Gamma^{2}\bigl(\frac{Q_n-s}{4}\bigr)}
{\pi^{n-m+1}\Gamma\bigl(\frac{s-2m}{2}\bigr)}\times\\
\label{4.3}
&
\int_{\mathbb{S}^{2(n-m)+1}}|1-\zeta\cdot\overline{\eta}|^{s-Q_n}
\mathcal{A}_{s-2m,\mathbb{S}^{2(n-m)+1}}\widetilde{\mathcal{R}}_{m}F(\eta)d\eta .
\end{align}
\end{lemma}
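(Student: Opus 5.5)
Lemma \ref{lm4.3} is the CR-sphere version of Lemma \ref{lm4.2}, and I would deduce it from Lemma \ref{lm4.2} by the Cayley transform rather than redo the duality argument on the sphere. Given $F\in C^\infty(\mathbb{S}^{2n+1})$, set
\[
f=(2|J_{\mathscr{C}_n}|)^{\frac{Q_n-s}{2Q_n}}(F\circ\mathscr{C}_n).
\]
By \eqref{a2.6}, $\int_{\mathbb{H}^n}|\mathcal{L}_{s,\mathbb{H}^n}^{1/2}f|^2=2\int_{\mathbb{S}^{2n+1}}|\mathcal{A}_{s,\mathbb{S}^{2n+1}}^{1/2}F|^2$.

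The key geometric observation is that $\mathscr{C}_n$ restricted to $\mathbb{H}^{n-m}$ (where $z''=0$) is exactly $\mathscr{C}_{n-m}$ onto $\mathbb{S}^{2(n-m)+1}$, up to the embedding \eqref{d1.7}. Moreover $2|J_{\mathscr{C}_n}|$ restricted to $\mathbb{H}^{n-m}$ equals $(2|J_{\mathscr{C}_{n-m}}|)^{Q_n/Q_{n-m}}$ by \eqref{2.3}. Since
\[
\frac{Q_n}{Q_{n-m}}\cdot\frac{Q_n-s}{2Q_n}=\frac{Q_{n-m}-(s-2m)}{2Q_{n-m}},
\]
we get $\mathcal{R}_m f=(2|J_{\mathscr{C}_{n-m}}|)^{\frac{Q_{n-m}-(s-2m)}{2Q_{n-m}}}(\widetilde{\mathcal{R}}_mF\circ\mathscr{C}_{n-m})$. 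Hence \eqref{a2.6} in dimension $n-m$ with exponent $s-2m$ converts the trace term of \eqref{b3.6} into $2\int|\mathcal{A}_{s-2m}^{1/2}\widetilde{\mathcal{R}}_mF|^2$.

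For the remainder term, I would show that $\mathcal{P}_{s,m}(\mathcal{R}_mf)$ is the Cayley pullback of $\widetilde{\mathcal{P}}_{s,m}(\widetilde{\mathcal{R}}_mF)$:
\[
\mathcal{P}_{s,m}(\mathcal{R}_mf)=(2|J_{\mathscr{C}_n}|)^{\frac{Q_n-s}{2Q_n}}\bigl(\widetilde{\mathcal{P}}_{s,m}(\widetilde{\mathcal{R}}_mF)\circ\mathscr{C}_n\bigr).
\]
Then $f-\mathcal{P}_{s,m}(\mathcal{R}_mf)$ is the pullback of $F-\widetilde{\mathcal{P}}_{s,m}(\widetilde{\mathcal{R}}_mF)$, and \eqref{a2.6} applies once more. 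Dividing the resulting identity by $2$ gives \eqref{b4.3}.

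The main obstacle is verifying this pullback identity. I would use the standard Cayley formulas
\[
|1-\zeta\cdot\overline{\eta}|=\tfrac12\,|u^{-1}v|^2\,\bigl(2|J_{\mathscr{C}_n}(u)|\bigr)^{\frac1{Q_n}}\bigl(2|J_{\mathscr{C}_n}(v)|\bigr)^{\frac1{Q_n}}
\]
for $\zeta=\mathscr{C}_n(u)$, $\eta=\mathscr{C}_n(v)$, together with $|\zeta''|^2=4|z''|^2/|1+|z|^2+it|^2$, which gives $\sum_{j>n-m+1}|\zeta_j|^2=|z''|^2(2|J_{\mathscr{C}_n}(u)|)^{2/Q_n}$. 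I would also use $d\eta=2|J_{\mathscr{C}_{n-m}}(v)|\,dv$ on the subsphere.

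Substituting these into \eqref{b3.2}, the powers of the Jacobian in $u$ combine to $(2|J_{\mathscr{C}_n}(u)|)^{\frac{s-2m}{Q_n}-\frac{Q_n+s-4m}{2Q_n}}=(2|J_{\mathscr{C}_n}(u)|)^{\frac{Q_n-s}{2Q_n}\cdot(-1)}$, up to sign bookkeeping that must come out to the correct weight. In $v$, the weight from $\widetilde{\mathcal{R}}_mF$ and the measure must cancel against the kernel's $v$-factor. I expect the exponents to match exactly by the homogeneity count, leaving only constant factors of $2$. Comparing these with the constant in Lemma \ref{lm4.2} fixes the prefactor $\Gamma^2(\frac{Q_n+s-4m}{4})/(2\pi^{n-m+1}\Gamma(\frac{s-2m}{2}))$; a careful powers-of-$2$ audit is the tedious part.

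For \eqref{4.3}, I would transfer \eqref{4.2}, namely $\mathcal{P}_{s,m}=a_{n,s}\frac{\pi^m\Gamma(s/2)}{\Gamma((s-2m)/2)}\int|v^{-1}u|^{s-Q_n}\mathcal{L}_{s-2m}\mathcal{R}_mf\,dv$, through the same Cayley dictionary. Here $\mathcal{L}_{s-2m}\mathcal{R}_mf$ becomes the $\frac{Q_{n-m}+s-2m}{2Q_{n-m}}$-weighted pullback of $\mathcal{A}_{s-2m}\widetilde{\mathcal{R}}_mF$ by \eqref{2.6}, and $|v^{-1}u|^{s-Q_n}$ becomes $|1-\zeta\cdot\overline\eta|^{s-Q_n}$ times Jacobian powers that again cancel. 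Finally, density of smooth functions extends the identity to $W^{s,2}(\mathbb{S}^{2n+1})$, exactly as in the proof of Lemma \ref{lm4.2}.
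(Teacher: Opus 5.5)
Your route to \eqref{b4.3} is the paper's. You pull $F$ back by $\mathscr{C}_n$, apply Lemma~\ref{lm4.2}, recognize $\mathcal{P}_{s,m}(\mathcal{R}_mf)$ as the weighted pullback of $\widetilde{\mathcal{P}}_{s,m}(\widetilde{\mathcal{R}}_mF)$ (this is \eqref{3.10}), and convert the trace term with \eqref{a2.6} on $\mathbb{H}^{n-m}$. Your check $\frac{Q_n}{Q_{n-m}}\cdot\frac{Q_n-s}{2Q_n}=\frac{Q_{n-m}-(s-2m)}{2Q_{n-m}}$ is exactly the step the paper leaves implicit. Two bookkeeping corrections are needed.

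First, $\mathscr{C}_n(\mathbb{H}^{n-m})$ is $\widetilde{\mathbb{S}}^{2(n-m)+1}$. Its zeros sit in slots $n-m+1,\dots,n$, not $n-m+2,\dots,n+1$, so the weight you compute is $\sum_{j=n-m+1}^{n}|\zeta_j|^2=|z''|^2(2|J_{\mathscr{C}_n}(u)|)^{2/Q_n}$. You then pass to \eqref{d1.7} by the swap $\zeta_{n-m+1}\leftrightarrow\zeta_{n+1}$. This swap lies in $U(n+1)$, commutes with $\mathcal{A}_{s,\mathbb{S}^{2n+1}}$, and preserves $|1-\zeta\cdot\overline{\eta}|$.

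Second, the surface measure is $d\eta=|J_{\mathscr{C}_{n-m}}(v)|\,dv=\frac12\bigl(2|J_{\mathscr{C}_{n-m}}(v)|\bigr)dv$, not $2|J_{\mathscr{C}_{n-m}}(v)|\,dv$ (compare \eqref{a2.6}). With this, the $v$-exponents cancel and the powers of $2$ multiply to $\frac12\cdot2^{\frac{Q_n+s-4m}{2}}\cdot\frac12=2^{n+\frac s2-2m-1}$. That is exactly the constant in $\mathcal{P}_{s,m}$, so the prefactor in \eqref{b3.2} is verified rather than ``fixed''. With your measure you would be off by a factor $2$.

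The genuine problem is \eqref{4.3}. Your claim that $|v^{-1}u|^{s-Q_n}$ ``becomes $|1-\zeta\cdot\overline{\eta}|^{s-Q_n}$'' contradicts your own dictionary, which gives
\[
|v^{-1}u|^{s-Q_n}=2^{\frac{s-Q_n}{2}}\,|1-\zeta\cdot\overline{\eta}|^{\frac{s-Q_n}{2}}\bigl(2|J_{\mathscr{C}_n}(u)|\bigr)^{\frac{Q_n-s}{2Q_n}}\bigl(2|J_{\mathscr{C}_n}(v)|\bigr)^{\frac{Q_n-s}{2Q_n}} .
\]
Insert this, \eqref{2.6} on $\mathbb{H}^{n-m}$, and $dv=2\bigl(2|J_{\mathscr{C}_{n-m}}(v)|\bigr)^{-1}d\eta$ into \eqref{4.2}--\eqref{c3.7}. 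All Jacobians do cancel, but the constant picks up a factor $2^{1+\frac{s-Q_n}{2}}$. What your argument actually proves is
\[
\widetilde{\mathcal{P}}_{s,m}(\widetilde{\mathcal{R}}_{m}F)(\zeta)=\frac{\Gamma^{2}\bigl(\frac{Q_n-s}{4}\bigr)}{2\pi^{n-m+1}\Gamma\bigl(\frac{s-2m}{2}\bigr)}\int_{\mathbb{S}^{2(n-m)+1}}|1-\zeta\cdot\overline{\eta}|^{\frac{s-Q_n}{2}}\,\mathcal{A}_{s-2m,\mathbb{S}^{2(n-m)+1}}\widetilde{\mathcal{R}}_{m}F(\eta)\,d\eta .
\]
Here the kernel is $\frac{\pi^m\Gamma(s/2)}{\Gamma((s-2m)/2)}$ times the Green function of $\mathcal{A}_{s,\mathbb{S}^{2n+1}}$, mirroring \eqref{4.2}.

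The display \eqref{4.3} as printed keeps the Heisenberg constant of \eqref{4.2} and the unhalved exponent. It cannot be reached, because it is false. Take $F\equiv1$ and $\zeta$ with $\zeta_1=\dots=\zeta_{n-m+1}=0$, so that $\zeta\cdot\overline{\eta}=0$ on the subsphere. Then \eqref{b3.2} gives $\Gamma^2(\frac{Q_n+s-4m}{4})/\bigl((n-m)!\,\Gamma(\frac{s-2m}{2})\bigr)$, while the printed \eqref{4.3} gives $2^{n-\frac s2}$ times this.

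The paper's one-line justification (``use \eqref{3.10}, \eqref{2.6}, \eqref{4.2}'') follows your route and skips the same conversion. You should state and prove the corrected formula instead. The later passage to $s\to Q_n$, concluding that the limit lies in $\mathcal{P}_n$, uses only the shape of this kernel, so it is unaffected.
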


\begin{proof}
Without loss of generality, we may assume \(F \in C^{\infty}(\mathbb{S}^{2n+1})\).
Consider
\[
\widetilde{\mathbb{S}}^{2(n-m)+1}
= \bigl\{(\zeta_{1},\dots,\zeta_{n-m},0,\dots,0,\zeta_{n+1})\in \mathbb{S}^{2n+1} \bigr\},
\]
which satisfies
\[
\mathscr{C}_{n}(\mathbb{H}^{n-m}) = \widetilde{\mathbb{S}}^{2(n-m)+1}\setminus\{(0,\dots,0,1)\}.
\]
It suffices to verify the statement for \(\widetilde{\mathbb{S}}^{2(n-m)+1}\).
For simplicity, we continue to denote by \(\widetilde{\mathcal{R}}_{m}\) the restriction operator that maps  functions  on \(\mathbb{S}^{2n+1}\) to \(\widetilde{\mathbb{S}}^{2(n-m)+1}\).

Observe that for \(\zeta = \mathscr{C}_{n}(u) = \mathscr{C}_{n}(z,t)\) and \(\eta = \mathscr{C}_{n}(v) = \mathscr{C}_{n}(w,\mu)\) we have (see \cite{fl2})
\begin{align}\label{a4.3}
|1-\zeta \cdot \bar{\eta}|
= 2\bigl((1+|z|^2)^2 + t^2\bigr)^{-1/2}
|u^{-1}v|^2
\bigl((1+|w|^2)^2 + \mu^2\bigr)^{-1/2}.
\end{align}
Using (\ref{2.1}) and (\ref{a4.3}) we obtain
\begin{align}\nonumber
&\frac{\bigl(\sum\limits_{j=n-m+1}^{n}|\zeta_{j}|^{2}\bigr)^{\frac{s-2m}{2}}}
{|1-\zeta\cdot\overline{\eta}|^{\frac{Q_n+s-4m}{2}}}\\
\nonumber
=& |z''|^{s-2m}
\Bigl(\frac{4}{(1+|z|^{2})^{2}+t^2}\Bigr)^{\frac{s-2m}{2}}
\Bigl(\frac{2}{(1+|z|^{2})^{2}+t^2}\Bigr)^{-\frac{Q_n+s-4m}{4}}\times\\
\nonumber
&
\frac{1}{|u^{-1}v|^{Q_n+s-4m}}
\Bigl(\frac{2}{(1+|w'|^{2})^{2}+\mu^2}\Bigr)^{-\frac{Q_n+s-4m}{4}}\\
\nonumber
=& 2^{s-2m}
\Bigl(\frac{1}{(1+|z|^{2})^{2}+t^2}\Bigr)^{-\frac{Q_n-s}{4}}
\frac{|z''|^{s-2m}}{|u^{-1}v|^{Q_n+s-4m}}
\Bigl(\frac{4}{(1+|w'|^{2})^{2}+\mu^2}\Bigr)^{-\frac{Q_n+s-4m}{4}}\\
\label{b3.1}
=& 2^{\frac{Q_n+s}{2}-2m}
(2|J_{\mathscr{C}_n}|)^{-\frac{Q_n-s}{2Q_n}}
\frac{|z''|^{s-2m}}{|u^{-1}v|^{Q_n+s-4m}}
\Bigl(\frac{4}{(1+|w'|^{2})^{2}+\mu^2}\Bigr)^{-\frac{Q_n+s-4m}{4}}.
\end{align}
Consequently,
\begin{align}\nonumber
&(2|J_{\mathscr{C}_n}|)^{\frac{Q_n-s}{2Q_n}}
\widetilde{\mathcal{P}}_{s,m}(\widetilde{\mathcal{R}}_{m}F\circ\mathscr{C}_{n})\\
\nonumber
=& \frac{2^{n+\frac{s}{2}-2m}\Gamma^{2}\bigl(\frac{Q_n+s-4m}{4}\bigr)}
{\pi^{n-m+1}\Gamma\bigl(\frac{s-2m}{2}\bigr)}
\frac{1}{2}\int_{\mathbb{H}^{n-m}}
\frac{|z''|^{s-2m}}{|u^{-1}v|^{Q_n+s-4m}}
\widetilde{\mathcal{R}}_{m}F\circ\mathscr{C}_{n}\times
\Bigl(\frac{4}{(1+|w'|^{2})^{2}+\mu^2}\Bigr)^{\frac{Q_n-s}{4}} dv\\
\nonumber
=& \frac{2^{n+\frac{s}{2}-2m-1}\Gamma^{2}\bigl(\frac{Q_n+s-4m}{4}\bigr)}
{\pi^{n-m+1}\Gamma\bigl(\frac{s-2m}{2}\bigr)}
\int_{\mathbb{H}^{n-m}}
\frac{|z''|^{s-2m}}{|u^{-1}v|^{Q_n+s-4m}}
\widetilde{\mathcal{R}}_{m}\bigl((2|J_{\mathscr{C}_n}|)^{\frac{Q_n-s}{2Q_n}}F\circ\mathscr{C}\bigr)\\
\label{3.10}
=& \mathcal{P}_{s,m}\bigl(\widetilde{\mathcal{R}}_{m}(2|J_{\mathscr{C}_n}|)^{\frac{Q_n-s}{2Q_n}}F\circ\mathscr{C}\bigr).
\end{align}
Combining Lemma \ref{lm4.2}, identity (\ref{2.6}) and relation (\ref{3.10}) yields (\ref{b4.3}).

Finally, to verify (\ref{4.3}), one may use (\ref{3.10}), (\ref{2.6}) and (\ref{4.2}), from which the formula follows directly.
\end{proof}

\textbf{Proof of Theorem \ref{th1.4}.} The theorem is an immediate consequence of the Sobolev inequality \eqref{Sobolev inequalities1} together with Lemma \ref{lm4.3}.

\vspace{0.2cm}

Next we prove Theorem \ref{th1.5}. We first establish the following lemma.
\begin{lemma}\label{lm4.4}
Let \(2m < s < Q_n\). For any \(f \in W^{s,2}(\mathbb{H}^{n})\), define
\begin{align}\nonumber
\mathcal{P}'_{s,m}(\mathcal{R}_{m}f)(u)=&\frac{2^{s-2m-1}\Gamma^{2}\bigl(\frac{Q_n+s-4m}{4}\bigr)}
{\pi^{n-m+1}\Gamma\bigl(\frac{s-2m}{2}\bigr)}\times\\
\label{3.11}
&
|z''|^{s-2m}\int_{\mathbb{S}^{2(n-m)+1}} |u^{-1}v|^{4m-Q_n-s}\mathcal{R}_{m}f(v)dv .
\end{align}
Then
\begin{align}\nonumber
\int_{\mathbb{H}^{n}}\bigl|\mathcal{L}_{s,\mathbb{H}^{n}}^{1/2}f\bigr|^{2}
=& \int_{\mathbb{H}^{n}}\Bigl|\mathcal{L}_{s,\mathbb{H}^{n}}^{1/2}\bigl(f-
\mathcal{P}'_{s,m}(\mathcal{R}_{m}f)\bigr)\Bigr|^{2} \\
\label{3.6}
&\qquad + 2\frac{\pi^{m}\Gamma\bigl(\frac{s}{2}\bigr)}{\Gamma\bigl(\frac{s-2m}{2}\bigr)}
\int_{\mathbb{S}^{2(n-m)+1}}\bigl|\mathcal{A}_{s-2m,\mathbb{S}^{2(n-m)+1}}^{1/2}
\mathcal{R}_{m}f\bigr|^{2}.
\end{align}
\end{lemma}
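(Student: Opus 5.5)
The plan is to repeat the duality argument of Lemma \ref{lm4.1}, now with the trace space $\mathbb{H}^{n-m}$ replaced by the sphere $\mathbb{S}^{2(n-m)+1}\subset\mathbb{C}^{n-m+1}\times\{0\}\subset\mathbb{H}^n$ of (\ref{d1.8}). We then identify the resulting projection operator with $\mathcal{P}'_{s,m}$.

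\textbf{Step 1: the operator $\mathcal{S}'_m$ and its adjoint.} Define
\[
\mathcal{S}'_m f=\mathcal{A}^{1/2}_{s-2m,\mathbb{S}^{2(n-m)+1}}\widetilde{\mathcal{R}}_m(f\ast k_s),
\qquad
\mathcal{S}'^{*}_m g(u)=\int_{\mathbb{S}^{2(n-m)+1}}k_s(v^{-1}u)\,\mathcal{A}^{1/2}_{s-2m}g(v)\,dv .
\]
By (\ref{3.2}),
\[
\mathcal{S}'_m\mathcal{S}'^{*}_m g=a_{n,s}\,\mathcal{A}^{1/2}_{s-2m}\int_{\mathbb{S}^{2(n-m)+1}}|u^{-1}v|^{s-Q_n}\,\mathcal{A}^{1/2}_{s-2m}g(v)\,dv,
\qquad u\in\mathbb{S}^{2(n-m)+1}.
\]
The key claim is that this equals a constant multiple of $g$. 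For $u,v$ both in the sphere (so $t=\mu=0$), we have $u^{-1}v=(w-z,\,-2\operatorname{Im} z\cdot\overline w)$ with $|z|=|w|=1$. A direct computation gives $|u^{-1}v|^4=|z-w|^4+4(\operatorname{Im} z\cdot\overline w)^2=4|1-z\cdot\overline w|^2$. Hence
\[
|u^{-1}v|^{s-Q_n}=2^{(s-Q_n)/2}\,|1-z\cdot\overline w|^{(s-Q_n)/2}.
\]
Since $Q_n-s=Q_{n-m}-(s-2m)$, this kernel is exactly the CR-sphere Green kernel of $\mathcal{A}_{s-2m,\mathbb{S}^{2(n-m)+1}}$. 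That is, $\mathcal{A}^{-1}_{s-2m}$ on $\mathbb{S}^{2(n-m)+1}$ has kernel $c\,|1-\zeta\cdot\overline\eta|^{(s-2m-Q_{n-m})/2}$, and transporting (\ref{3.1}) via (\ref{2.6}) and (\ref{a4.3}) gives $c=2^{(Q_n-s)/2}a_{n-m,s-2m}$. Therefore
\[
\mathcal{S}'_m\mathcal{S}'^{*}_m=\frac{a_{n,s}}{a_{n-m,s-2m}}\cdot\frac12\,\mathrm{Id}.
\]
The factor $\frac12$ comes from the Jacobian normalization $2|J_{\mathscr{C}}|$; compare (\ref{a2.6}). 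Tracking the powers of $2$ carefully here is the main obstacle. It is exactly what should produce the factor $2$ in front of the trace term in (\ref{3.6}), so I will verify the constant against (\ref{3.4}).

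\textbf{Step 2: the Pythagorean identity.} Given Step 1, the algebra of Lemma \ref{lm4.1}, with $\mathcal{S}_m$ replaced by $\mathcal{S}'_m$ and the constant replaced by $2\pi^m\Gamma(\frac s2)/\Gamma(\frac{s-2m}{2})$, yields verbatim
\[
\int|\mathcal{L}^{1/2}_s f|^2=\int\Bigl|\mathcal{L}^{1/2}_s\bigl(f-\kappa\,\mathcal{L}^{-1/2}_s\mathcal{S}'^{*}_m\mathcal{A}^{1/2}_{s-2m}\widetilde{\mathcal{R}}_mf\bigr)\Bigr|^2+\kappa\int|\mathcal{A}^{1/2}_{s-2m}\widetilde{\mathcal{R}}_mf|^2,
\qquad \kappa=\frac{2\pi^m\Gamma(\frac s2)}{\Gamma(\frac{s-2m}{2})}.
\]
The identity holds first for $C_0^\infty$ functions and then for all of $W^{s,2}$ by the density argument of Lemma \ref{lm4.2}.

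\textbf{Step 3: identifying the projection with $\mathcal{P}'_{s,m}$.} As in (\ref{4.2}), the correction term equals
\[
\kappa\,a_{n,s}\int_{\mathbb{S}^{2(n-m)+1}}|v^{-1}u|^{s-Q_n}\,\mathcal{A}_{s-2m}\widetilde{\mathcal{R}}_mf(v)\,dv .
\]
Using self-adjointness, I move $\mathcal{A}_{s-2m}$ onto the kernel in the $v$ variable. This requires computing $\mathcal{A}_{s-2m,\mathbb{S}^{2(n-m)+1}}$ applied to $v\mapsto|u^{-1}v|^{s-Q_n}$ for $u=(z',z'',0)$ off the sphere. I do this by the analogue of (\ref{3.5}): write $|u^{-1}v|^4=(|z''|^2+|z'-w'|^2)^2+4(\operatorname{Im} z'\cdot\overline{w'})^2$ and recognize it, after a rescaling, as a Poisson-type kernel $|1-\zeta\cdot\overline\eta|$ with $\zeta$ in the ball. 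The function is then an eigenfunction-type expression (like $|1-\overline\xi\cdot\eta|^{-(Q-s)/2}$ in the Sobolev extremals), on which $\mathcal{A}$ acts by the constant of (\ref{2.7}), producing $|z''|^{s-2m}|u^{-1}v|^{4m-Q_n-s}$.

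Combining constants should give exactly the prefactor $2^{s-2m-1}\Gamma^2(\frac{Q_n+s-4m}{4})/\bigl(\pi^{n-m+1}\Gamma(\frac{s-2m}{2})\bigr)$ in (\ref{3.11}). Alternatively, Step 3 can be transported from Lemma \ref{lm4.3} via the Cayley transform $\mathscr{C}_n$ together with a CR automorphism mapping $\widetilde{\mathbb{S}}^{2(n-m)+1}$ to $\mathbb{S}^{2(n-m)+1}$; I would use that route as a check on the constants.
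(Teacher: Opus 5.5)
\textbf{Steps 1--2: a valid alternative route, but with a wrong constant.} Your Steps 1--2 are a workable alternative to the paper's proof. The paper never redoes the $\mathcal{S}\mathcal{S}^{*}$ computation. It sets $F=\bigl((2|J_{\mathscr{C}_n}|)^{-\frac{Q_n-s}{2Q_n}}f\bigr)\circ\mathscr{C}_n^{-1}$ and notes that $\mathscr{C}_n$ fixes $\mathbb{S}^{2(n-m)+1}$ pointwise with $2|J_{\mathscr{C}_n}|=1$ there, so $\widetilde{\mathcal{R}}_mF=\mathcal{R}_mf$. It then reads \eqref{3.6} off Lemma~\ref{lm4.3} via \eqref{a2.6}. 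The factor $2$ is just the $2$ in \eqref{a2.6}, and no Green's function on the small sphere is needed.

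Your direct route does need that Green constant, and the one you wrote is wrong. Transporting \eqref{3.1} through \eqref{2.6} and \eqref{a4.3} gives
\[
c=2^{1-\frac{Q_n-s}{2}}a_{n-m,s-2m}=\frac{\Gamma^{2}\bigl(\frac{Q_n-s}{4}\bigr)}{2\pi^{n-m+1}\Gamma\bigl(\frac{s-2m}{2}\bigr)},
\]
not $2^{\frac{Q_n-s}{2}}a_{n-m,s-2m}$. This value is also forced by duality between \eqref{Sobolev inequalities1} and Theorem~\ref{HLS2}. With your $c$ you would get $\mathcal{S}'_m\mathcal{S}'^{*}_m=2^{s-Q_n}a_{n,s}/a_{n-m,s-2m}$. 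With the correct one you get $2^{\frac{s-Q_n}{2}}a_{n,s}/c=\Gamma(\frac{s-2m}{2})/\bigl(2\pi^{m}\Gamma(\frac{s}{2})\bigr)$, which is the value you asserted. So Step 2 and the factor $2$ go through once this is repaired.

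\textbf{Step 3: the genuine failure.} The computation you sketch does not produce $|z''|^{s-2m}$, and the printed formula \eqref{3.11} cannot be confirmed. Take $v=(w,0)$ with $w=(\hat w,0)$, $\hat w\in\mathbb{C}^{n-m+1}$, $|\hat w|=1$, and an arbitrary $u=(z,t)$; you may not set $t=0$. The group law gives $|u^{-1}v|^{2}=\sqrt{A}\,|1-\overline{\xi}\cdot\hat w|$, where
\[
A=(1+|z|^2)^2+t^2,\qquad \xi=\frac{2\hat z}{1+|z|^2-it},\qquad \hat z=(z_1,\dots,z_{n-m+1}).
\]
Moreover $1-|\xi|^2=W/A$, where $W=(1-|z|^2)^2+4|z'''|^2+t^2$ and $z'''=(z_{n-m+2},\dots,z_n)$. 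Conformal covariance of $\mathcal{A}_{s-2m}$ under the automorphism sending $0$ to $\xi$ yields a factor $(1-|\xi|^2)^{\frac{s-2m}{2}}$, and all powers of $A$ cancel:
\[
\mathcal{A}_{s-2m,\mathbb{S}^{2(n-m)+1}}\bigl[\,|u^{-1}\cdot|^{s-Q_n}\bigr](v)=\frac{\Gamma^{2}\bigl(\frac{Q_n+s-4m}{4}\bigr)}{\Gamma^{2}\bigl(\frac{Q_n-s}{4}\bigr)}\,W^{\frac{s-2m}{2}}\,|u^{-1}v|^{4m-Q_n-s}.
\]
So your projection is
\[
\mathcal{P}'_{s,m}(\mathcal{R}_mf)(u)=\frac{2^{n-\frac{s}{2}}\Gamma^{2}\bigl(\frac{Q_n+s-4m}{4}\bigr)}{\pi^{n-m+1}\Gamma\bigl(\frac{s-2m}{2}\bigr)}\,W^{\frac{s-2m}{2}}\int_{\mathbb{S}^{2(n-m)+1}}|u^{-1}v|^{4m-Q_n-s}\,\mathcal{R}_mf(v)\,dv .
\]

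For $\mathcal{R}_m\mathcal{P}'_{s,m}(\mathcal{R}_mf)=\mathcal{R}_mf$ to hold, the weight has to vanish exactly on $\mathbb{S}^{2(n-m)+1}$. $W$ does. On the sphere, however, $|z''|=|z_{n-m+1}|$, which is not identically zero. Also, the integral in \eqref{3.11} diverges at every point of the sphere (for positive $\mathcal{R}_mf$), so the printed expression is not even finite there.

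The paper's own identification has the same defect. It invokes \eqref{b3.1}, which was derived for $\widetilde{\mathbb{S}}^{2(n-m)+1}$ with weight $\sum_{j=n-m+1}^{n}|\zeta_j|^2$. For \eqref{d1.7} the relevant weight is $\sum_{j=n-m+2}^{n+1}|\zeta_j|^2$, which pulls back under $\mathscr{C}_n$ to $W/A$. What your argument (and the paper's) actually establishes is \eqref{3.6} with $\mathcal{P}'_{s,m}$ given by the corrected formula above, equivalently the Cayley transport of $\widetilde{\mathcal{P}}_{s,m}$. You should state that, rather than claim agreement with the prefactor $2^{s-2m-1}$ and the weight $|z''|^{s-2m}$.
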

\begin{proof}
Without loss of generality, assume \(f \in C_{0}^{\infty}(\mathbb{H}^{n})\).
Define
\[
F = \bigl( (2|J_{\mathscr{C}_{n}}|)^{-\frac{Q_n-s}{2Q_n}} f \bigr)\circ\mathscr{C}_{n}^{-1}.
\]
From (\ref{a2.6}) and (\ref{3.10}) we obtain
\[
\int_{\mathbb{S}^{2n+1}} \bigl|\mathcal{A}_{s,\mathbb{S}^{2n+1}}^{1/2} F\bigr|^{2}
= \frac{1}{2}\int_{\mathbb{H}^{n}} \bigl|\mathcal{L}_{s,\mathbb{H}^{n}}^{1/2} f\bigr|^{2}.
\]

Observe that the Cayley transform \(\mathscr{C}_{n}\) preserves the sphere
\[
\mathbb{S}^{2(n-m)+1} = \Bigl\{(\zeta_{1},\dots,\zeta_{n}, 0)\in \mathbb{H}^{n} :
\zeta_{j}=0 \text{ for } n-m+2 \leq j \leq n,\
\sum_{j=1}^{n-m+1}|\zeta_{j}|^{2}=1 \Bigr\},
\]
and that \(2|J_{\mathscr{C}_{n}}|(u)=1\) for every \(u \in \mathbb{S}^{2(n-m)+1}\). Consequently,
\[
\int_{\mathbb{S}^{2(n-m)+1}} \bigl|\mathcal{A}_{s-2m,\mathbb{S}^{2(n-m)+1}}^{1/2}
\widetilde{\mathcal{R}}_{m}F\bigr|^{2}
= \int_{\mathbb{S}^{2(n-m)+1}} \bigl|\mathcal{A}_{s-2m,\mathbb{S}^{2(n-m)+1}}^{1/2}
\mathcal{R}_{m}f\bigr|^{2},
\]
and
\begin{align*}
&\int_{\mathbb{S}^{2n+1}} \Bigl|\mathcal{A}_{s,\mathbb{S}^{2n+1}}^{1/2}\bigl(F-
\widetilde{\mathcal{P}}_{s,m}(\widetilde{\mathcal{R}}_{m}F)\bigr)\Bigr|^{2}\\
=& \frac{1}{2}
\int_{\mathbb{H}^{n}} \Bigl|\mathcal{L}_{s,\mathbb{H}^{n}}^{1/2}\bigl(f - (2|J_{\mathscr{C}_{n}}|)^{\frac{Q_n-s}{2Q_n}}\widetilde{\mathcal{P}}_{s,m}(\widetilde{\mathcal{R}}_{m}F)\circ\mathscr{C}_n\bigr)\Bigr|^{2}.
\end{align*}
Using (\ref{b3.2}) and (\ref{b3.1}) we obtain
\begin{align*}
&(2|J_{\mathscr{C}_{n}}|)^{\frac{Q_n-s}{2Q_n}}\widetilde{\mathcal{P}}_{s,m}(\widetilde{\mathcal{R}}_{m}F)\circ\mathscr{C}_n \\
=& 2^{s-2m}
\frac{\Gamma^{2}\bigl(\frac{Q_n+s-4m}{4}\bigr)}
{2\pi^{n-m+1}\Gamma\bigl(\frac{s-2m}{2}\bigr)}
|z''|^{s-2m}\int_{\mathbb{S}^{2(n-m)+1}} |u^{-1}v|^{4m-Q_n-s}\mathcal{R}_{m}f(v)dv \\
=& \mathcal{P}'_{s,m}(\mathcal{R}_{m}f).
\end{align*}
Thus
\begin{align*}
&\int_{\mathbb{S}^{2n+1}} \Bigl|\mathcal{A}_{s,\mathbb{S}^{2n+1}}^{1/2}\bigl(F-
\widetilde{\mathcal{P}}_{s,m}(\widetilde{\mathcal{R}}_{m}F)\bigr)\Bigr|^{2}= \frac{1}{2}
\int_{\mathbb{H}^{n}} \Bigl|\mathcal{L}_{s,\mathbb{H}^{n}}^{1/2}\bigl(f - \mathcal{P}'_{s,m}(\mathcal{R}_{m}f)\bigr)\Bigr|^{2}.
\end{align*}
Finally, combining the preceding identities with Lemma \ref{lm4.3} yields (\ref{3.6}).
\end{proof}

\textbf{Proof of Theorem \ref{th1.5}.} The result follows directly from the Sobolev inequality \eqref{Sobolev inequalities1} together with Lemma \ref{lm4.4}.

\vspace{0.2cm}

Finally, we consider the limiting case \( s = Q_n \). We begin with the following lemma.

\begin{lemma}\label{lm4.4}
If \( F \in C^{\infty}(\mathbb{S}^{2n+1}) \cap \mathcal{P}_n \), then
\[
\widetilde{\mathcal{P}}_{Q_n,m}(\widetilde{\mathcal{R}}_{m}F)
:= \lim_{s\rightarrow Q_n} \widetilde{\mathcal{P}}_{s,m}(\widetilde{\mathcal{R}}_{m}F) \; \in \mathcal{P}_n .
\]
\end{lemma}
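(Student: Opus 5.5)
The plan is to use the representation \eqref{4.3} of $\widetilde{\mathcal{P}}_{s,m}(\widetilde{\mathcal{R}}_mF)$ from Lemma \ref{lm4.3}. In it, the kernel $|1-\zeta\cdot\overline{\eta}|^{s-Q_n}$ becomes explicit as $s\to Q_n$. The whole $s$-dependence is then in the prefactor
\[
c(s)=\frac{2^{n-1-\frac{s}{2}}\Gamma^{2}\bigl(\frac{Q_n-s}{4}\bigr)}{\pi^{n-m+1}\Gamma\bigl(\frac{s-2m}{2}\bigr)},
\]
which blows up like $(Q_n-s)^{-2}$, and in the operator $\mathcal{A}_{s-2m,\mathbb{S}^{2(n-m)+1}}$, where $s-2m\to Q_{n-m}$.

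First I would check that $G:=\widetilde{\mathcal{R}}_mF$ is CR-pluriharmonic on $\mathbb{S}^{2(n-m)+1}$. Restricting a holomorphic homogeneous polynomial of degree $j$ to the subsphere (setting the last $m$ variables to zero) gives a holomorphic homogeneous polynomial of degree $j$. Such a polynomial is automatically harmonic and lies in $\mathcal{H}_{j,0}$, and similarly for $\mathcal{H}_{0,j}$. Since $F$ is smooth, its pluriharmonic expansion converges rapidly, so $G=\sum_j(G_{j,0}+G_{0,j})+G_{0,0}$.

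Next I would compute the eigenvalues of $\mathcal{A}_{\sigma,\mathbb{S}^{2(n-m)+1}}$ with $\sigma=s-2m$ on these components. On $\mathcal{H}_{j,0}$ the eigenvalue is $\lambda_j(\sigma)\lambda_0(\sigma)$. The factor $\lambda_0(\sigma)=\Gamma(\frac{Q_{n-m}+\sigma}{4})/\Gamma(\frac{Q_n-s}{4})$ is $O(Q_n-s)$, so these eigenvalues are $O(Q_n-s)$ when $j\ge1$. On $\mathcal{H}_{0,0}$ the eigenvalue is $\lambda_0(\sigma)^2=O((Q_n-s)^2)$.

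I would then expand the kernel as
\[
|1-\zeta\cdot\overline{\eta}|^{s-Q_n}=1+(s-Q_n)\ln|1-\zeta\cdot\overline{\eta}|+O\bigl((Q_n-s)^2\ln^2|1-\zeta\cdot\overline\eta|\bigr)
\]
and collect the terms that survive after multiplying by $c(s)$.
\begin{itemize}
\item The constant part $1$ pairs only with the mean of $\mathcal{A}_\sigma G$, which is $\lambda_0(\sigma)^2\cdot\frac{1}{|\mathbb{S}^{2(n-m)+1}|}\int G$. Multiplied by $c(s)$ this has a finite limit, a constant in $\zeta$.
\item The logarithmic part contributes $(s-Q_n)c(s)\int\ln|1-\zeta\cdot\overline\eta|\,\mathcal{A}_\sigma G(\eta)\,d\eta$. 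Both factors $(s-Q_n)$ and $\mathcal{A}_\sigma G$ are $O(Q_n-s)$ (the mean of $G$ contributes $O((Q_n-s)^2)$), so after multiplying by $c(s)$ this term tends to a finite multiple of $\int\ln|1-\zeta\cdot\overline\eta|\,H(\eta)\,d\eta$. Here $H$ is pluriharmonic, essentially $\mathcal{A}'_{Q_{n-m}}G$ up to the mean term.
\item The remainder tends to zero. Since $\ln^2|1-\zeta\cdot\overline\eta|$ is integrable in $\eta$ uniformly in $\zeta$, dominated convergence controls it.
\end{itemize}

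Finally I would show the limit is in $\mathcal{P}_n$. For $|\zeta|\le1$ and $\eta$ on the subsphere, write $\ln|1-\zeta\cdot\overline\eta|=\operatorname{Re}\log(1-\zeta\cdot\overline\eta)=-\operatorname{Re}\sum_{k\ge1}(\zeta\cdot\overline\eta)^k/k$. The term $(\zeta\cdot\overline\eta)^k$ is a holomorphic homogeneous polynomial of degree $k$ in $\zeta$, and its conjugate is antiholomorphic. Integrating against $H(\eta)$ term by term therefore gives
\[
\int\ln|1-\zeta\cdot\overline\eta|\,H(\eta)\,d\eta\in\bigoplus_{k\ge1}(\mathcal{H}_{k,0}\oplus\mathcal{H}_{0,k}),
\]
convergent in $L^2(\mathbb{S}^{2n+1})$. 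Adding the constant, the limit lies in $\mathcal{P}_n$, which is closed in $L^2$.

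I expect the main obstacle to be justifying the interchange of the limit $s\to Q_n$ with the infinite spherical-harmonic sums and the singular kernel. I would handle it by first proving everything for $F$ a finite sum of pluriharmonic harmonics, where all quantities are explicit. I would then pass to general smooth $F$ using the rapid decay of the coefficients together with the uniform integrability of $\ln|1-\zeta\cdot\overline\eta|$ and its square.
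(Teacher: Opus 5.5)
Your proposal is correct and follows essentially the paper's route. Like the paper, you split the kernel in \eqref{4.3} as $1+\bigl(|1-\zeta\cdot\overline{\eta}|^{s-Q_n}-1\bigr)$ and let the constant part pair with the mean of $\widetilde{\mathcal{R}}_mF$ through the $\mathcal{H}_{0,0}$ eigenvalue (the paper does this via $\mathcal{A}_{s-2m,\mathbb{S}^{2(n-m)+1}}1$). In the limit you obtain the kernel $\ln|1-\zeta\cdot\overline{\eta}|$ acting on $\mathcal{A}'_{Q_{n-m},\mathbb{S}^{2(n-m)+1}}\widetilde{\mathcal{R}}_mF$, and you conclude because that logarithm is pluriharmonic in $\zeta$.

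You give more justification than the paper, via the series $-\operatorname{Re}\sum_k(\zeta\cdot\overline{\eta})^k/k$ and the limit-interchange argument. The one imprecision is your remainder bound. For $s<Q_n$ the Taylor remainder carries an extra factor $|1-\zeta\cdot\overline{\eta}|^{s-Q_n}$ near the diagonal, which your stated $O\bigl((Q_n-s)^2\ln^2|1-\zeta\cdot\overline{\eta}|\bigr)$ bound omits. That factor is still uniformly integrable for $s$ close to $Q_n$, so your dominated-convergence argument goes through.
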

\begin{proof}
From \eqref{4.3} we have
\begin{align*}
&\widetilde{\mathcal{P}}_{s,m}(\widetilde{\mathcal{R}}_{m}F)(\zeta) \\
=& \frac{2^{n-1-\frac{s}{2}}\Gamma^{2}\bigl(\frac{Q_n-s}{4}\bigr)}
{\pi^{n-m+1}\Gamma\bigl(\frac{s-2m}{2}\bigr)}
\int_{\mathbb{S}^{2(n-m)+1}}\mathcal{A}_{s-2m,\mathbb{S}^{2(n-m)+1}}
\widetilde{\mathcal{R}}_{m}F(\eta)d\eta \\
&\quad + \frac{2^{n-1-\frac{s}{2}}\Gamma^{2}\bigl(\frac{Q_n-s}{4}\bigr)}
{\pi^{n-m+1}\Gamma\bigl(\frac{s-2m}{2}\bigr)}
\int_{\mathbb{S}^{2(n-m)+1}}\bigl(|1-\zeta\cdot\overline{\eta}|^{s-Q_n}-1\bigr)
\mathcal{A}_{s-2m,\mathbb{S}^{2(n-m)+1}}\widetilde{\mathcal{R}}_{m}F(\eta)d\eta .
\end{align*}
Using
\[
\mathcal{A}_{s-2m,\mathbb{S}^{2(n-m)+1}}1
= \frac{\Gamma^{2}\bigl(\frac{Q_{n-m}+(s-2m)}{4}\bigr)}
{\Gamma^{2}\bigl(\frac{Q_{n-m}-(s-2m)}{4}\bigr)}
= \frac{\Gamma^{2}\bigl(\frac{Q_{n}+s-4m}{4}\bigr)}
{\Gamma^{2}\bigl(\frac{Q_{n}-s}{4}\bigr)},
\]
we obtain
\begin{align*}
&\widetilde{\mathcal{P}}_{s,m}(\widetilde{\mathcal{R}}_{m}F) \\
=& \frac{2^{n-1-\frac{s}{2}}\Gamma^{2}\bigl(\frac{Q_n+s-4m}{4}\bigr)}
{\pi^{n-m+1}\Gamma\bigl(\frac{s-2m}{2}\bigr)}
\int_{\mathbb{S}^{2(n-m)+1}}\widetilde{\mathcal{R}}_{m}F(\eta)d\eta \\
&\quad + \frac{2^{n-1-\frac{s}{2}}\Gamma^{2}\bigl(\frac{Q_n-s}{4}\bigr)}
{\pi^{n-m+1}\Gamma\bigl(\frac{s-2m}{2}\bigr)}
\int_{\mathbb{S}^{2(n-m)+1}}\bigl(|1-\zeta\cdot\overline{\eta}|^{s-Q_n}-1\bigr)
\mathcal{A}_{s-2m,\mathbb{S}^{2(n-m)+1}}\widetilde{\mathcal{R}}_{m}F(\eta)d\eta .
\end{align*}
Since \(\widetilde{\mathcal{R}}_{m}F(\eta) \in \mathcal{P}_{n-m}\) (because \(F \in \mathcal{P}_n\)), it follows that
\begin{align*}
&\widetilde{\mathcal{P}}_{Q_n,m}
= \lim_{s\rightarrow Q_n}\widetilde{\mathcal{P}}_{s,m}(\widetilde{\mathcal{R}}_{m}F) \\
=& \frac{2^{n-1-\frac{Q_n}{2}}\Gamma\bigl(\frac{Q_n-2m}{2}\bigr)}
{\pi^{n-m+1}} \int_{\mathbb{S}^{2(n-m)+1}} \widetilde{\mathcal{R}}_{m}F(\eta)d\eta \\
&\quad - \frac{2^{n+1-\frac{Q_n}{2}}}{\pi^{n-m+1}}
\int_{\mathbb{S}^{2(n-m)+1}} \ln|1-\zeta\cdot\overline{\eta}|
\mathcal{A}'_{Q_{n-m},\mathbb{S}^{2(n-m)+1}}\widetilde{\mathcal{R}}_{m}F(\eta)d\eta .
\end{align*}
As a function of the variable \(\zeta\), \(\ln|1-\zeta\cdot\overline{\eta}|\) belongs to \(\mathcal{P}_n\); consequently,
\(\widetilde{\mathcal{P}}_{Q_n,m}(\widetilde{\mathcal{R}}_{m}F)\) also lies in \(\mathcal{P}_n\).
\end{proof}

\begin{lemma}\label{lm4.5}
For any \(F \in W^{Q_n,2}(\mathbb{S}^{2n+1})\),
\begin{align}
\int_{\mathbb{S}^{2n+1}}\bigl|[\mathcal{A}'_{Q_n,\mathbb{S}^{2n+1}}]^{1/2}F\bigr|^{2}
=& \int_{\mathbb{S}^{2n+1}}\Bigl|[\mathcal{A}'_{Q_n,\mathbb{S}^{2n+1}}]^{1/2}\bigl(F-
\widetilde{\mathcal{P}}_{Q_n,m}(\widetilde{\mathcal{R}}_{m}F)\bigr)\Bigr|^{2} \nonumber\\
& + \pi^{m}\int_{\mathbb{S}^{2(n-m)+1}}
\bigl|(\mathcal{A}'_{Q_n-2m,\mathbb{S}^{2(n-m)+1}})^{1/2}\widetilde{\mathcal{R}}_{m}F\bigr|^{2}. \label{4.5}
\end{align}
\end{lemma}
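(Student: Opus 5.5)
Lemma \ref{lm4.5} is the limit of identity \eqref{b4.3} of Lemma \ref{lm4.3} as $s\to Q_n$. We renormalize it by dividing by a factor that vanishes at the endpoint, so that the operators $\mathcal{A}_{s}$ turn into $\mathcal{A}'_{Q_n}$. Throughout, $F$ is taken in $C^\infty(\mathbb{S}^{2n+1})\cap\mathcal{P}_n$; this is the setting where $\mathcal{A}'_{Q_n}$ is defined, and Lemma \ref{lm4.4} then gives $\widetilde{\mathcal{P}}_{Q_n,m}(\widetilde{\mathcal{R}}_mF)\in\mathcal{P}_n$. The general case follows by density.

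\textbf{Step 1: the two sides of \eqref{b4.3} near the endpoint.} Write \eqref{b4.3} as
\[
\langle \mathcal{A}_{s}F,F\rangle=\langle \mathcal{A}_{s}(F-\widetilde{\mathcal{P}}_{s,m}\widetilde{\mathcal{R}}_mF),\,F-\widetilde{\mathcal{P}}_{s,m}\widetilde{\mathcal{R}}_mF\rangle+\frac{\pi^m\Gamma(\frac s2)}{\Gamma(\frac{s-2m}2)}\langle \mathcal{A}_{s-2m}\widetilde{\mathcal{R}}_mF,\widetilde{\mathcal{R}}_mF\rangle .
\]
On $\mathcal{H}_{j,0}$ the eigenvalue $\lambda_j(s)\lambda_0(s)$ contains the factor $1/\Gamma(\frac{Q_n-s}4)$, so it vanishes linearly at $s=Q_n$; the same holds on $\mathcal{H}_{0,j}$ and $\mathcal{H}_{0,0}$. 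Hence for $G\in\mathcal{P}_n$ we have $\mathcal{A}_sG=(s-Q_n)\bigl(-\tfrac{\Gamma(Q_n/2)}4\bigr)\mathcal{A}'_{Q_n}G+o(s-Q_n)$, which is \eqref{1.7}. The lower-dimensional term behaves the same way. Since $(s-2m)-Q_{n-m}=s-Q_n$, we get $\mathcal{A}_{s-2m}\widetilde{\mathcal{R}}_mF=(s-Q_n)\bigl(-\tfrac{\Gamma(Q_{n-m}/2)}4\bigr)\mathcal{A}'_{Q_{n-m}}\widetilde{\mathcal{R}}_mF+o(s-Q_n)$, using $\widetilde{\mathcal{R}}_mF\in\mathcal{P}_{n-m}$.

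\textbf{Step 2: divide and let $s\to Q_n$.} We divide \eqref{b4.3} by $-(s-Q_n)\Gamma(Q_n/2)/4$ and let $s\to Q_n$. The left side tends to $\langle\mathcal{A}'_{Q_n}F,F\rangle$. In the last term the constant becomes
\[
\frac{\pi^m\Gamma(\frac{Q_n}2)}{\Gamma(\frac{Q_n-2m}2)}\cdot\frac{\Gamma(Q_{n-m}/2)}{\Gamma(Q_n/2)}=\pi^m,
\]
because $Q_{n-m}=Q_n-2m$. This gives the coefficient $\pi^m$ in \eqref{4.5}, with $\mathcal{A}'$ on the lower sphere (the paper writes it as $\mathcal{A}'_{Q_n-2m}=\mathcal{A}'_{Q_{n-m}}$).

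\textbf{Step 3: the middle term (main obstacle).} Here both the operator and the function $G_s:=F-\widetilde{\mathcal{P}}_{s,m}\widetilde{\mathcal{R}}_mF$ depend on $s$. Moreover $G_s$ need not lie in $\mathcal{P}_n$ for $s<Q_n$, so its components in $\mathcal{H}_{j,k}$ with $j,k\ge1$ carry eigenvalues that do not vanish at $s=Q_n$.

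The plan is to expand $\widetilde{\mathcal{P}}_{s,m}\widetilde{\mathcal{R}}_mF$ in spherical harmonics using formula \eqref{4.3} and the decomposition in the proof of Lemma \ref{lm4.4}. The kernel $|1-\zeta\cdot\bar\eta|^{s-Q_n}-1$ has $\mathcal{H}_{j,k}$-components that are $O(s-Q_n)$ for $jk\neq0$. This is seen from the expansion of $|1-\zeta\cdot\bar\eta|^{-2\alpha}=(1-\zeta\cdot\bar\eta)^{-\alpha}(1-\bar\zeta\cdot\eta)^{-\alpha}$ with $\alpha=(Q_n-s)/2$: the mixed terms carry a coefficient $\alpha^2$. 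This coefficient is multiplied by the prefactor $\Gamma^2(\frac{Q_n-s}4)\sim c(s-Q_n)^{-2}$, and the factor $\mathcal{A}_{s-2m}\widetilde{\mathcal{R}}_mF=O(s-Q_n)$ supplies one more power of $s-Q_n$.

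The upshot should be that the non-pluriharmonic part of $G_s$ is $O(s-Q_n)$ in a smooth norm. Its contribution to $\langle\mathcal{A}_sG_s,G_s\rangle/(s-Q_n)$ is therefore $O(s-Q_n)\to0$. On the pluriharmonic part, $G_s$ converges smoothly to $F-\widetilde{\mathcal{P}}_{Q_n,m}\widetilde{\mathcal{R}}_mF$ by Lemma \ref{lm4.4}, and Step 1 applies uniformly. These uniform bounds on each harmonic component, and summing them over $(j,k)$, are the delicate part.

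If that bookkeeping becomes unwieldy, a fallback is available. Identity \eqref{4.5} is equivalent to an orthogonality relation plus the identity $\langle\mathcal{A}'\widetilde{\mathcal{P}}_{Q_n,m}g,\widetilde{\mathcal{P}}_{Q_n,m}g\rangle=\pi^m\langle\mathcal{A}'g,g\rangle$. One would then verify directly that $\mathcal{A}'_{Q_n}\widetilde{\mathcal{P}}_{Q_n,m}g$ restricted to $\mathcal{P}_n$ equals $\pi^m$ times $\mathcal{A}'_{Q_{n-m}}g$ pushed forward as a measure on the subsphere. This would use the explicit logarithmic kernel from Lemma \ref{lm4.4}, together with the fact that $\ln|1-\zeta\cdot\bar\eta|$ is, up to constants, the fundamental solution of $\mathcal{A}'_{Q_n}$ on $\mathcal{P}_n$ modulo constants.
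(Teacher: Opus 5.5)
Your proposal follows the paper's proof: divide identity \eqref{b4.3} of Lemma~\ref{lm4.3} by $-(s-Q_n)\Gamma(\frac{Q_n}{2})/4$, let $s\to Q_n$ using \eqref{1.7} and the fact that $\widetilde{\mathcal{P}}_{Q_n,m}(\widetilde{\mathcal{R}}_mF)\in\mathcal{P}_n$, and read off the coefficient $\pi^m$ from $Q_{n-m}=Q_n-2m$. The paper passes to the limit in the middle term without comment, so your Step 3 is extra justification rather than a different route. Its key estimate is right: the $\alpha^2$ coefficient on the mixed harmonics makes the non-pluriharmonic part $O(s-Q_n)$. That bound should, however, be stated in the $\mathcal{A}_{s,\mathbb{S}^{2n+1}}$-energy rather than in a smooth norm, because for $s<Q_n$ the function $\widetilde{\mathcal{P}}_{s,m}(\widetilde{\mathcal{R}}_mF)$ contains the non-smooth factor $\bigl(\sum_{j\ge n-m+2}|\zeta_j|^2\bigr)^{(s-2m)/2}$.
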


\begin{proof}
Without loss of generality, assume \(F \in C^{\infty}(\mathbb{S}^{2n+1}) \cap \mathcal{P}_n\).
Using Lemma \ref{lm4.3}, Lemma \ref{lm4.4} and \eqref{1.7}, we obtain
\begin{align*}
&\int_{\mathbb{S}^{2n+1}}\bigl|[\mathcal{A}'_{Q_n,\mathbb{S}^{2n+1}}]^{1/2}F\bigr|^{2}\\
=& -\frac{4}{\Gamma(\frac{Q_n}{2})}
\lim_{s\rightarrow Q_n}\frac{1}{s-Q_n}
\int_{\mathbb{S}^{2n+1}}\bigl|\mathcal{A}_{s,\mathbb{S}^{2n+1}}^{1/2}F\bigr|^{2}\\
=& -\frac{4}{\Gamma(\frac{Q_n}{2})}
\lim_{s\rightarrow Q_n}\frac{1}{s-Q_n}
\int_{\mathbb{S}^{2n+1}}
\Bigl|\mathcal{A}_{s,\mathbb{S}^{2n+1}}^{1/2}\bigl(F-
\widetilde{\mathcal{P}}_{s,m}(\widetilde{\mathcal{R}}_{m}F)\bigr)\Bigr|^{2} \\
&\qquad - \frac{\pi^{m}\Gamma\bigl(\frac{Q_n}{2}\bigr)}{\Gamma\bigl(\frac{Q_n-2m}{2}\bigr)}
\frac{4}{\Gamma(\frac{Q_n}{2})}
\lim_{s\rightarrow Q_n}\frac{1}{s-Q_n}
\int_{\mathbb{S}^{2(n-m)+1}}\bigl|\mathcal{A}_{s-2m,\mathbb{S}^{2(n-m)+1}}^{1/2}
\widetilde{\mathcal{R}}_{m}F\bigr|^{2} \\
=& \int_{\mathbb{S}^{2n+1}}\Bigl|[\mathcal{A}'_{Q_n,\mathbb{S}^{2n+1}}]^{1/2}\bigl(F-
\widetilde{\mathcal{P}}_{Q_n,m}(\widetilde{\mathcal{R}}_{m}F)\bigr)\Bigr|^{2} \\
&\qquad + \pi^{m}\int_{\mathbb{S}^{2(n-m)+1}}
\bigl|(\mathcal{A}'_{Q_n-2m,\mathbb{S}^{2(n-m)+1}})^{1/2}\widetilde{\mathcal{R}}_{m}F\bigr|^{2}.
\end{align*}
\end{proof}

\textbf{Proof of Theorem \ref{th1.7}.} The theorem follows directly by combining Theorem \ref{brl} with Lemma \ref{lm4.5}.

\section{Proof of Theorem \ref{th1.8}}

Let $m < s < n$ and let
\[
\widetilde{\mathcal{S}}_{m}: C_{0}^{\infty}(\mathbb{R}^n)\subset L^2(\mathbb{R}^{n})\longrightarrow L^2(\mathbb{R}^{n-m})
\]
be the linear operator defined as
\[
\widetilde{\mathcal{S}}_{m}f = (-\Delta_{\mathbb{R}^{n-m}})^{\frac{s-m}{4}}
\tau_{m}(-\Delta_{\mathbb{R}^{n}})^{-\frac{s}{4}}f .
\]
Denote by \(\widetilde{\mathcal{S}}^{\ast}_{m}: L^2(\mathbb{R}^{n-m})\to L^2(\mathbb{R}^{n})\) the adjoint operator  of \(\widetilde{\mathcal{S}}_{m}\).
Using the well-known formula
\[
(-\Delta_{\mathbb{R}^n})^{-\alpha/2} \delta_{0}= \frac{1}{\gamma_{n}(\alpha)}
\frac{1}{|x|^{n-\alpha}}, \qquad
\gamma_{n}(\alpha) = \pi^{n/2}2^{\alpha}
\frac{\Gamma\bigl(\frac{\alpha}{2}\bigr)}{\Gamma\bigl(\frac{n-\alpha}{2}\bigr)},
\]
we obtain
\[
\widetilde{\mathcal{S}}^{\ast}_{m}g(x) = \frac{1}{\gamma_{n}(\frac{s}{2})}
\int_{\mathbb{R}^{n-m}} \frac{1}{|x-y'|^{n-\frac{s}{2}}}
(-\Delta_{\mathbb{R}^{n-m}})^{\frac{s-m}{4}}g(y')dy'.
\]
Consequently,
\begin{align}\nonumber
\widetilde{\mathcal{S}}_{m}\widetilde{\mathcal{S}}_{m}^{\ast}g
=& (-\Delta_{\mathbb{R}^{n-m}})^{\frac{s-m}{4}}
\frac{1}{\gamma_{n}(s)}\tau_{m}
\int_{\mathbb{R}^{n-m}} \frac{1}{|x-y'|^{n-s}}
(-\Delta_{\mathbb{R}^{n-m}})^{\frac{s-m}{4}}g(y')dy' \\
\nonumber
=& \frac{\gamma_{n-m}(s-m)}{\gamma_{n}(s)} (-\Delta_{\mathbb{R}^{n-m}})^{\frac{s-m}{4}}(-\Delta_{\mathbb{R}^{n-m}})^{\frac{m-s}{2}}
(-\Delta_{\mathbb{R}^{n-m}})^{\frac{s-m}{4}} g\\
\label{5.1}
=&\frac{\gamma_{n-m}(s-m)}{\gamma_{n}(s)}g.
\end{align}

\begin{lemma}\label{lm5.1}
For any $f\in C_{0}^{\infty}(\mathbb{R}^{n})$ and $0<s< n$, it holds
\begin{align*}
\int_{\mathbb{R}^{n}}|(-\Delta_{\mathbb{R}^{n}})^{\frac{s}{4}}f|^{2}=&\int_{\mathbb{R}^{n}}\left|(-\Delta_{\mathbb{R}^{n}})^{\frac{s}{4}}(f-
\frac{\gamma_{n}(s)}{\gamma_{n-m}(s-m)}(-\Delta_{\mathbb{R}^{n}})^{-\frac{s}{4}}\widetilde{\mathcal{S}}_m^{\ast}(-\Delta_{\mathbb{R}^{n-m}})^{\frac{s-m}{4}} \tau_{m}f)\right|^{2}\\
&+
\frac{\gamma_{n}(s)}{\gamma_{n-m}(s-m)}
\int_{\mathbb{R}^{n-m}}|(-\Delta_{\mathbb{R}^{n-m}})^{\frac{s-m}{4}}\tau_{m}f|^{2}.
\end{align*}
\end{lemma}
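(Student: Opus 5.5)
The plan is to copy the proof of Lemma \ref{lm4.1} verbatim, with the Heisenberg operators replaced by their Euclidean counterparts. Write $\kappa=\gamma_{n-m}(s-m)/\gamma_n(s)$, so that \eqref{5.1} reads $\widetilde{\mathcal{S}}_m\widetilde{\mathcal{S}}_m^{\ast}=\kappa\,\mathrm{Id}$ on $L^2(\mathbb{R}^{n-m})$. Then $\kappa^{-1}\widetilde{\mathcal{S}}_m^{\ast}\widetilde{\mathcal{S}}_m$ is a bounded self-adjoint operator on $L^2(\mathbb{R}^n)$, and it is idempotent: it is the orthogonal projection onto the range of $\widetilde{\mathcal{S}}_m^{\ast}$. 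The identity in the lemma is then just the Pythagorean splitting $\|h\|^2=\|h-Ph\|^2+\|Ph\|^2$.

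Concretely, I set $h=(-\Delta_{\mathbb{R}^n})^{s/4}f$, so that $f=(-\Delta_{\mathbb{R}^n})^{-s/4}h$ and, by definition,
\[
\widetilde{\mathcal{S}}_m h=(-\Delta_{\mathbb{R}^{n-m}})^{\frac{s-m}{4}}\tau_m f .
\]
The first term on the right of the lemma then becomes $\int_{\mathbb{R}^n}|h-\kappa^{-1}\widetilde{\mathcal{S}}_m^{\ast}\widetilde{\mathcal{S}}_m h|^2$. I expand the square. The cross term is $-2\kappa^{-1}\int_{\mathbb{R}^n} h\,\widetilde{\mathcal{S}}_m^{\ast}\widetilde{\mathcal{S}}_m h=-2\kappa^{-1}\|\widetilde{\mathcal{S}}_m h\|_{L^2(\mathbb{R}^{n-m})}^2$. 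For the quadratic term, \eqref{5.1} gives
\[
\|\widetilde{\mathcal{S}}_m^{\ast}\widetilde{\mathcal{S}}_m h\|^2=\langle \widetilde{\mathcal{S}}_m\widetilde{\mathcal{S}}_m^{\ast}\widetilde{\mathcal{S}}_m h,\widetilde{\mathcal{S}}_m h\rangle=\kappa\|\widetilde{\mathcal{S}}_m h\|^2 .
\]
Combining these, the first term equals $\|h\|^2-\kappa^{-1}\|\widetilde{\mathcal{S}}_m h\|^2$. Substituting back $h$ and $\widetilde{\mathcal{S}}_m h$ gives exactly the stated identity with constant $\kappa^{-1}=\gamma_n(s)/\gamma_{n-m}(s-m)$. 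I would restrict to real-valued $f$, as in Section 3; complex $f$ follows by linearity, or by replacing products with real parts of inner products.

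The main obstacle is justifying \eqref{5.1} and the domain issues, not the algebra. We need $\widetilde{\mathcal{S}}_m$ to be well defined and bounded from $L^2(\mathbb{R}^n)$ to $L^2(\mathbb{R}^{n-m})$. This requires $m<s<n$, so that the Riesz potential $|x-y'|^{s-n}$ restricted to $\mathbb{R}^{n-m}$ is again a Riesz kernel of order $s-m>0$ there; this is the same constraint $2m<s<Q_n$ that appears in Section 3, which is how I read the lemma's hypothesis $0<s<n$. I also need $(-\Delta_{\mathbb{R}^{n-m}})^{\frac{m-s}{2}}\delta_0=\gamma_{n-m}(s-m)^{-1}|y'|^{s-m-(n-m)}=\gamma_{n-m}(s-m)^{-1}|y'|^{s-n}$, which is exactly the restriction identity used in \eqref{5.1}. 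I would first verify \eqref{5.1} on Schwartz functions $g$ whose Fourier transform vanishes near the origin, where every operator is given by an absolutely convergent integral. The factor in the adjoint formula should be read as $\gamma_n(s/2)$ with exponent $n-s/2$, composing via the semigroup law to $\gamma_n(s)^{-1}|x-y'|^{s-n}$. Then \eqref{5.1} yields $\|\widetilde{\mathcal{S}}_m^{\ast}g\|^2=\kappa\|g\|^2$, so $\widetilde{\mathcal{S}}_m^{\ast}$ and hence $\widetilde{\mathcal{S}}_m$ are bounded, and they extend by density. With boundedness in hand, the expansion above holds for all $f\in C_0^\infty(\mathbb{R}^n)$.
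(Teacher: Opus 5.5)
Your proposal is correct and is essentially the paper's proof. The paper just refers back to Lemma \ref{lm4.1}: set $h=(-\Delta_{\mathbb{R}^n})^{s/4}f$, expand the square, and use $\widetilde{\mathcal{S}}_m\widetilde{\mathcal{S}}_m^{\ast}=\kappa\,\mathrm{Id}$ from \eqref{5.1} to evaluate $\|\widetilde{\mathcal{S}}_m^{\ast}\widetilde{\mathcal{S}}_m h\|^2$, exactly as you do; your projection viewpoint and your remarks on boundedness and on the real range $m<s<n$ (the standing assumption under which $\widetilde{\mathcal{S}}_m$ is introduced) only make explicit what the paper leaves implicit.
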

\begin{proof}
The proof is completely analogous to that of Lemma \ref{lm4.1} and is therefore omitted.
\end{proof}

Next we compute the expression \( (-\Delta_{\mathbb{R}^{n}})^{-\frac{s}{4}}\widetilde{\mathcal{S}}_{m}^{\ast}(-\Delta_{\mathbb{R}^{n-m}})^{\frac{s-m}{4}} \tau_{m}f \). Observe that
\begin{align}\nonumber
&(-\Delta_{\mathbb{R}^{n}})^{-\frac{s}{4}}\widetilde{\mathcal{S}}_{m}^{\ast}(-\Delta_{\mathbb{R}^{n-m}})^{\frac{s-m}{4}} \tau_{m}f\\
\nonumber
=&(-\Delta_{\mathbb{R}^{n}})^{-\frac{s}{4}}
\frac{1}{\gamma_{n}(\frac{s}{2})}\int_{\mathbb{R}^{n-m}}\frac{1}{|x-y'|^{n-\frac{s}{2}}}
(-\Delta_{ \mathbb{R}^{n-m}})^{\frac{s-m}{2}}\tau_{m}f(y')dy'\\[2pt]
\label{5.2}
=&\frac{1}{\gamma_{n}(s)}
\int_{\mathbb{R}^{n-m}}\frac{1}{|x-y'|^{n-s}}
(-\Delta_{ \mathbb{R}^{n-m}})^{\frac{s-m}{2}}\tau_{m}f(y')dy'.
\end{align}
Write \( x=(x',x'') \) and \( y=(y',y'') \) with \( y''=0 \). Then
\begin{align}\label{b5.1}
\frac{1}{|x-y'|^{n-s}}
= \bigl(|x''|^{2}+|x'-y'|^{2}\bigr)^{\frac{s-n}{2}}
= |x''|^{s-n}
\Bigl(\frac{1}{\frac{|x'-y'|^{2}}{|x''|^{2}}+1}\Bigr)^{\frac{n-m-(s-m)}{2}}.
\end{align}

Consider the stereographic projection \(\mathscr{S}_{n}: \mathbb{R}^n \to \mathbb{S}^n\) and its inverse \(\mathscr{S}_{n}^{-1}: \mathbb{S}^n \to \mathbb{R}^n\), given by
\[
\mathscr{S}_{n}(x) = \Bigl( \frac{2x}{1+|x|^2}, \frac{1-|x|^2}{1+|x|^2} \Bigr),\qquad
\mathscr{S}_{n}^{-1}(\omega) = \Bigl( \frac{\omega_1}{1+\omega_{n+1}},\dots, \frac{\omega_n}{1+\omega_{n+1}} \Bigr).
\]
Its Jacobian is
\begin{align}\label{Jacobian}
|J_{\mathscr{S}_{n}}|(x) = \Bigl( \frac{2}{1+|x|^2} \Bigr)^n.
\end{align}
The operators \(P_{s,\mathbb{S}^{n}}\) and \(\Delta_{\mathbb{R}^{n}}\) are related by the following intertwining formula (see \cite{Morpurgo}): for \(F\in C^{\infty}(\mathbb{S}^{n})\),
\begin{align}\label{5.6}
(P_{s,\mathbb{S}^{n}} F) \circ \mathscr{S}_{n}
= |J_{\mathscr{S}_{n}}|^{-(n+s)/(2n)}
(-\Delta_{\mathbb{R}^{n}})^{s/2}\bigl( |J_{\mathscr{S}}|^{(n-s)/(2n)} (F \circ \mathscr{S}_{n}) \bigr).
\end{align}
Choosing \(F\equiv 1\) gives
\begin{align}\label{b5.7}
(-\Delta_{\mathbb{R}^{n}})^{s/2}|J_{\mathscr{S}_{n}}|^{(n-s)/(2n)}
= \frac{\Gamma\bigl(\frac{n+s}{2}\bigr)}{\Gamma\bigl(\frac{n-s}{2}\bigr)}
|J_{\mathscr{S}_{n}}|^{(n+s)/(2n)} .
\end{align}
Combing (\ref{b5.1}) and (\ref{b5.7}) yields
\begin{align}\nonumber
&(-\Delta_{ \mathbb{R}^{n-m}})^{\frac{s-m}{2}}\frac{1}{|x-y'|^{n-s}}\\
\nonumber
=& (-\Delta_{ \mathbb{R}^{n-m}})^{\frac{s-m}{2}}
|x''|^{s-n}
\Bigl(\frac{1}{\frac{|x'-y'|^{2}}{|x''|^{2}}+1}\Bigr)^{\frac{n-m-(s-m)}{2}}\\
\nonumber
=& 2^{s-m}\frac{\Gamma\bigl(\frac{n-m+s-m}{2}\bigr)}
{\Gamma\bigl(\frac{n-m-s+m}{2}\bigr)}
|x''|^{s-n}|x''|^{m-s}
\Bigl(\frac{1}{\frac{|x'-y'|^{2}}{|x''|^{2}}+1}\Bigr)^{\frac{n-m+(s-m)}{2}}\\[2pt]
\label{5.3}
=& 2^{s-m}\frac{\Gamma\bigl(\frac{n+s-2m}{2}\bigr)}
{\Gamma\bigl(\frac{n-s}{2}\bigr)}
|x''|^{s-m}
\frac{1}{|x-y'|^{n+s-2m}} .
\end{align}
Inserting (\ref{5.3}) into (\ref{5.2}) yields
\begin{align}\nonumber
&(-\Delta_{\mathbb{R}^{n}})^{-\frac{s}{4}}\mathcal{S}_{m}^{\ast}
(-\Delta_{\mathbb{R}^{n-m}})^{\frac{s-m}{4}} \tau_{m}f\\[2pt]
\nonumber
=& \frac{1}{\gamma_{n}(s)}
\int_{\mathbb{R}^{n-m}}
(-\Delta_{ \mathbb{R}^{n-m}})^{\frac{s-m}{2}}\frac{1}{|x-y'|^{n-s}}
\tau_{m}f(y')dy'\\[2pt]
\nonumber
=& \frac{\Gamma\bigl(\frac{n-s}{2}\bigr)}
{\pi^{n/2}2^{s}\Gamma(\frac{s}{2})} 2^{s-m}\frac{\Gamma\bigl(\frac{n+s-2m}{2}\bigr)}
{\Gamma\bigl(\frac{n-s}{2}\bigr)}
\int_{\mathbb{R}^{n-m}}\frac{|x''|^{s-m}}{|x-y'|^{n+s-2m}}
\tau_{m}f(y')dy'\\[2pt]
\label{5.4}
=& \frac{\Gamma\bigl(\frac{n+s-2m}{2}\bigr)}
{\pi^{n/2}2^{m}\Gamma(\frac{s}{2})}
\int_{\mathbb{R}^{n-m}}\frac{|x''|^{s-m}}{|x-y'|^{n+s-2m}}
\tau_{m}f(y')dy'.
\end{align}
Therefore, if we define
\[
\mathcal{Q}_{s,m}(\tau_{m}f)(x)=\frac{\gamma_{n}(s)}{\gamma_{n-m}(s-m)}
\frac{\Gamma\bigl(\frac{n+s-2m}{2}\bigr)}{\pi^{n/2}2^{m}\Gamma(\frac{s}{2})}
\int_{\mathbb{R}^{n-m}}\frac{|x''|^{s-m}}{|x-y'|^{n+s-2m}}
\tau_{m}f(y')dy',
\]
then by Lemma \ref{lm5.1}, we obtain the following lemma:
\begin{lemma}\label{lm5.2}
Let \(m<s< n\). For any \(f\in D_{s}(\mathbb{R}^{n})\),
\begin{align*}
\int_{\mathbb{R}^{n}}\bigl|(-\Delta_{\mathbb{R}^{n}})^{\frac{s}{4}}f\bigr|^{2}
=& \int_{\mathbb{R}^{n}}\Bigl|(-\Delta_{\mathbb{R}^{n}})^{\frac{s}{4}}\bigl(f-
\mathcal{Q}_{s,m}(\tau_{m}f)\bigr)\Bigr|^{2}
+ \frac{\gamma_{n}(s)}{\gamma_{n-m}(s-m)}
\int_{\mathbb{R}^{n-m}}\bigl|(-\Delta_{\mathbb{R}^{n-m}})^{\frac{s-m}{4}}\tau_{m}f\bigr|^{2}.
\end{align*}
\end{lemma}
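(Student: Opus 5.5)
The plan is to follow the argument for Lemma \ref{lm4.2} verbatim: first prove the identity for $f\in C_0^\infty(\mathbb{R}^n)$, then extend by density. For smooth compactly supported $f$, Lemma \ref{lm5.1} already gives the decomposition with the correction term
\[
\frac{\gamma_{n}(s)}{\gamma_{n-m}(s-m)}(-\Delta_{\mathbb{R}^{n}})^{-\frac{s}{4}}\widetilde{\mathcal{S}}_m^{\ast}(-\Delta_{\mathbb{R}^{n-m}})^{\frac{s-m}{4}}\tau_{m}f .
\]
By the explicit computation \eqref{5.2}--\eqref{5.4}, this correction term equals $\mathcal{Q}_{s,m}(\tau_m f)$. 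So the identity of Lemma \ref{lm5.2} holds on $C_0^\infty(\mathbb{R}^n)$.

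The only points to justify in that computation are the following.

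\begin{itemize}
\item The operator $(-\Delta_{\mathbb{R}^n})^{-s/4}$ may be moved inside the integral over $\mathbb{R}^{n-m}$. This reduces to the semigroup identity for Riesz potentials, $I_{s/2}I_{s/2}=I_s$, which is where the constants $\gamma_n(s/2)^{-1}\to\gamma_n(s)^{-1}$ come from.
\item The operator $(-\Delta_{\mathbb{R}^{n-m}})^{\frac{s-m}{2}}$ may be transferred by symmetry from $\tau_m f$ onto the kernel $|x-y'|^{s-n}$ in the variable $y'$.
\item Both steps are legitimate for $m<s<n$, since then all kernels are locally integrable and decay at infinity.
\item The kernel identity \eqref{5.3} follows from \eqref{b5.7} applied on $\mathbb{R}^{n-m}$ with exponent $s-m\in(0,n-m)$, together with translation and dilation invariance: scaling by $|x''|$ produces the factor $|x''|^{m-s}$.
\end{itemize}

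Note also that Lemma \ref{lm5.1} itself rests on \eqref{5.1}, i.e.\ $\widetilde{\mathcal S}_m\widetilde{\mathcal S}_m^*=\frac{\gamma_{n-m}(s-m)}{\gamma_n(s)}\,\mathrm{Id}$. Since $\tau_m|x-y'|^{s-n}=|x'-y'|^{(s-m)-(n-m)}$, this identity is exactly the Riesz potential formula on $\mathbb{R}^{n-m}$. With it, the algebraic expansion of the square goes through as in the proof of Lemma \ref{lm4.1}.

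Density step. Dropping the first (nonnegative) term in the identity gives, for all $f\in C_0^\infty(\mathbb{R}^n)$,
\[
\frac{\gamma_{n}(s)}{\gamma_{n-m}(s-m)}\int_{\mathbb{R}^{n-m}}|(-\Delta_{\mathbb{R}^{n-m}})^{\frac{s-m}{4}}\tau_m f|^2\le \int_{\mathbb{R}^n}|(-\Delta_{\mathbb{R}^n})^{\frac s4}f|^2 .
\]
Hence $\tau_m$ extends to a bounded operator $D_s(\mathbb{R}^n)\to D_{s-m}(\mathbb{R}^{n-m})$. Moreover, $f\mapsto \mathcal{Q}_{s,m}(\tau_m f)$ is, up to a constant, the bounded operator $(-\Delta)^{-s/4}\widetilde{\mathcal{S}}^*_m(-\Delta)^{(s-m)/4}\tau_m$. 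Therefore every term of the identity is continuous in the $D_s$ seminorm. Since $C_0^\infty$ is dense in $D_s(\mathbb{R}^n)$, the identity extends to all $f\in D_s(\mathbb{R}^n)$.

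Main obstacle. The delicate point is the rigorous justification of the interchange and symmetry steps in \eqref{5.2}--\eqref{5.3}. For $f\in C_0^\infty$, the function $(-\Delta_{\mathbb{R}^{n-m}})^{\frac{s-m}{2}}\tau_m f$ is not compactly supported; it decays only like $|y'|^{-(n-m)-(s-m)}$. Because $|x-y'|^{s-n}$ is bounded for $x''\neq 0$, the integrals still converge absolutely. I would first verify the identity pointwise for $x''\neq0$ via the Fourier transform in $y'$, where the kernel becomes an explicit Bessel-type function. Alternatively, I would apply $(-\Delta)^{\frac{s-m}{2}}$ to a mollified kernel and pass to the limit.
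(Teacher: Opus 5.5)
Your proposal is correct and follows the paper's route. You use Lemma \ref{lm5.1}, which rests on $\widetilde{\mathcal S}_m\widetilde{\mathcal S}_m^{\ast}=\frac{\gamma_{n-m}(s-m)}{\gamma_n(s)}\,\mathrm{Id}$, together with the kernel computation \eqref{5.2}--\eqref{5.4} that identifies the correction term with $\mathcal{Q}_{s,m}(\tau_m f)$, and then extend by density as in Lemma \ref{lm4.2}. Your remarks on the Riesz semigroup step, on moving $(-\Delta_{\mathbb{R}^{n-m}})^{\frac{s-m}{2}}$ onto the kernel, and on the density extension supply details that the paper leaves implicit.
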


We now extend Lemma \ref{lm5.2} to the sphere \(\mathbb{S}^{n}\).
For \(\omega = \mathscr{S}_{n}(x)\) and \(\eta = \mathscr{S}_{n}(y)\), it holds (see \cite{lieb2})
\begin{align}\label{5.5}
|\omega - \eta|^2 = \left( \frac{2}{1+|x|^2} \right) |x - y|^2 \left( \frac{2}{1+|y|^2} \right).
\end{align}
Consider
\[
\widetilde{\mathbb{S}}^{n-m}
= \bigl\{(\zeta_{1},\dots,\zeta_{n-m},0,\dots,0,\zeta_{n+1})\in \mathbb{S}^{n} \bigr\},
\]
which satisfies
\[
\mathscr{S}_{n}(\mathbb{R}^{n-m}) = \widetilde{\mathbb{S}}^{n-m}\setminus\{(0,\dots,0,1)\}.
\]
For simplicity, we continue to denote by \(\widetilde{\tau}_{m}\) the restriction operator that maps  functions  on \(\mathbb{S}^{n}\) to \(\widetilde{\mathbb{S}}^{n-m}\).

 Using Lemma \ref{lm5.2} together with the intertwining relation (\ref{5.6}), we obtain, for $F\in C^{\infty}(\mathbb{S}^{n})$,
\begin{align*}
&\int_{\mathbb{R}^{n}}\Bigl|(-\Delta_{\mathbb{R}^{n}})^{\frac{s}{4}}\bigl( |J_{\mathscr{S}_{n}}|^{\frac{n-s}{2n}} (F \circ \mathscr{S}_{n}) \bigr)\Bigr|^{2}\\
=& \int_{\mathbb{R}^{n}}\Bigl|(-\Delta_{\mathbb{R}^{n}})^{\frac{s}{4}}\Bigl( |J_{\mathscr{S}_{n}}|^{\frac{n-s}{2n}} (F \circ \mathscr{S}_{n}) -
\mathcal{Q}_{s,m}\bigl(\tau_{m}\bigl( |J_{\mathscr{S}_{n}}|^{\frac{n-s}{2n}} (F \circ \mathscr{S}_{n}) \bigr)\bigr)\Bigr)\Bigr|^{2}\\
&\quad + \frac{\gamma_{n}(s)}{\gamma_{n-m}(s-m)}
\int_{\mathbb{R}^{n-m}}\Bigl|(-\Delta_{\mathbb{R}^{n-m}})^{\frac{s-m}{4}}
\tau_{m}\bigl( |J_{\mathscr{S}_{n}}|^{\frac{n-s}{2n}} (F \circ \mathscr{S}_{n}) \bigr)\Bigr|^{2}.
\end{align*}

Observe that, by (\ref{5.5})  and (\ref{Jacobian}),
\begin{align*}
& |J_{\mathscr{S}_{n}}|^{-\frac{n-s}{2n}}
\mathcal{Q}_{s,m}\bigl(\tau_{m}\bigl( |J_{\mathscr{S}_{n}}|^{\frac{n-s}{2n}} (F \circ \mathscr{S}_{n}) \bigr)\bigr)\\
=& \frac{\gamma_{n}(s)}{\gamma_{n-m}(s-m)}
\frac{\Gamma\bigl(\frac{n+s-2m}{2}\bigr)}{\pi^{n/2}2^{m}\Gamma(\frac{s}{2})}
|J_{\mathscr{S}_{n}}|^{-\frac{n-s}{2n}}
|x''|^{s-m}
\int_{\mathbb{R}^{n-m}}\frac{1}{|x-y'|^{n+s-2m}}
\tau_{m}\bigl(|J_{\mathscr{S}_{n}}|^{\frac{n-s}{2n}} (F \circ \mathscr{S}_{n})\bigr)dy'\\
=& \frac{\gamma_{n}(s)}{\gamma_{n-m}(s-m)}
\frac{\Gamma\bigl(\frac{n+s-2m}{2}\bigr)}{\pi^{n/2}2^{m}\Gamma(\frac{s}{2})}
|J_{\mathscr{S}_{n}}|^{-\frac{n-s}{2n}}
|x''|^{s-m}
\int_{\mathbb{R}^{n-m}}\frac{1}{|x-y'|^{n+s-2m}}
|J_{\mathscr{S}_{n-m}}|^{\frac{n-s}{2(n-m)}} \tau_{m}(F \circ \mathscr{S}_{n})dy'\\[2pt]
=& \frac{\gamma_{n}(s)}{\gamma_{n-m}(s-m)}
\frac{\Gamma\bigl(\frac{n+s-2m}{2}\bigr)}{\pi^{n/2}2^{m}\Gamma(\frac{s}{2})}
\Bigl(\sum_{j=n-m+1}^{n}|\zeta_{j}|^{2}\Bigr)^{\frac{s-m}{2}}
\int_{\widetilde{\mathbb{S}}^{n-m}} \frac{1}{|\zeta-\overline{\eta}|^{n+s-2m}}
\widetilde{\tau}_{m}F(\eta)d\eta .
\end{align*}
Hence, if we set
\begin{align*}
\widetilde{\mathcal{Q}}_{s,m}(\widetilde{\tau}_{m}F)
=& \frac{\gamma_{n}(s)}{\gamma_{n-m}(s-m)}
\frac{\Gamma\bigl(\frac{n+s-2m}{2}\bigr)}{\pi^{n/2}2^{m}\Gamma(\frac{s}{2})}
\Bigl(\sum_{j=n-m+2}^{n+1}|\zeta_{j}|^{2}\Bigr)^{\frac{s-m}{2}}
\int_{\mathbb{S}^{n-m}} \frac{1}{|\zeta-\eta|^{n+s-2m}}
\widetilde{\tau}_{m}F(\eta)d\eta\\
=&\pi^{\frac{m-n}{2}}\frac{\Gamma\bigl(\frac{n+s-2m}{2}\bigr)}{\Gamma(\frac{s-m}{2})}\Bigl(\sum_{j=n-m+2}^{n+1}|\zeta_{j}|^{2}\Bigr)^{\frac{s-m}{2}}
\int_{\mathbb{S}^{n-m}} \frac{1}{|\zeta-\eta|^{n+s-2m}}
\widetilde{\tau}_{m}F(\eta)d\eta,
\end{align*}
then
we obtain the following lemma. (The case \(s=n\) is obtained by taking the limit \(s\to n\).)

\begin{lemma}\label{lm5.3}
Let \(m< s \leq n\).
For any \(F \in H^{s}(\mathbb{S}^{n})\),
\[
\int_{\mathbb{S}^{n}}\bigl|P_{s,\mathbb{S}^{n}}^{1/2}F\bigr|^{2}
= \int_{\mathbb{S}^{n}}\Bigl|P_{s,\mathbb{S}^{n}}^{1/2}\bigl(F-
\widetilde{\mathcal{Q}}_{s,m}(\widetilde{\tau}_{m}F)\bigr)\Bigr|^{2}
+ \pi^{m/2}  2^{m}  \frac{\Gamma\left(\frac{s}{2}\right)}{\Gamma\left(\frac{s-m}{2}\right)}
\int_{\mathbb{S}^{n-m}}\bigl|P_{s-m,\mathbb{S}^{n-m}}^{1/2}
\widetilde{\tau}_{m}F\bigr|^{2}.
\]
\end{lemma}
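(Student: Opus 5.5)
The plan is to transfer Lemma \ref{lm5.2} to the sphere by stereographic projection, exactly as Lemma \ref{lm4.3} was obtained from Lemma \ref{lm4.2}, and then pass to the limit $s\to n$.

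\textbf{Step 1: the case $m<s<n$.} Take $F\in C^\infty(\mathbb{S}^n)$ and set
\[
f=|J_{\mathscr{S}_n}|^{\frac{n-s}{2n}}(F\circ\mathscr{S}_n).
\]
By the intertwining relation (\ref{5.6}) and the change of variables $\omega=\mathscr{S}_n(x)$,
\[
\int_{\mathbb{R}^n}|(-\Delta)^{s/4}f|^2=\int_{\mathbb{S}^n}|P_{s,\mathbb{S}^n}^{1/2}F|^2 .
\]
Since $J_{\mathscr{S}_n}$ has only the scalar factor $(2/(1+|x|^2))^n$, on $\mathbb{R}^{n-m}$ we have $|J_{\mathscr{S}_n}|^{\frac{n-s}{2n}}=|J_{\mathscr{S}_{n-m}}|^{\frac{(n-m)-(s-m)}{2(n-m)}}$. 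Also $\mathscr{S}_n$ restricted to $\mathbb{R}^{n-m}$ is $\mathscr{S}_{n-m}$, with image $\widetilde{\mathbb{S}}^{n-m}$. Applying (\ref{5.6}) in dimension $n-m$ with order $s-m$ therefore gives
\[
\int_{\mathbb{R}^{n-m}}|(-\Delta)^{\frac{s-m}{4}}\tau_mf|^2=\int_{\widetilde{\mathbb{S}}^{n-m}}|P_{s-m}^{1/2}\widetilde\tau_mF|^2 .
\]
For the remainder term, the computation displayed just before the lemma, using (\ref{5.5}), gives
\[
|J_{\mathscr{S}_n}|^{-\frac{n-s}{2n}}\mathcal{Q}_{s,m}(\tau_mf)=\widetilde{\mathcal{Q}}_{s,m}(\widetilde\tau_mF)\circ\mathscr{S}_n .
\]
Applying (\ref{5.6}) once more then converts the first term on the right of Lemma \ref{lm5.2} into the corresponding spherical term. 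So Lemma \ref{lm5.2} yields the stated identity on $\widetilde{\mathbb{S}}^{n-m}$, with constant $\gamma_n(s)/\gamma_{n-m}(s-m)$.

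Simplifying this constant:
\[
\frac{\gamma_n(s)}{\gamma_{n-m}(s-m)}=\pi^{m/2}2^m\frac{\Gamma(s/2)\,\Gamma(\frac{n-m-(s-m)}{2})}{\Gamma(\frac{n-s}{2})\,\Gamma(\frac{s-m}{2})}=\pi^{m/2}2^m\frac{\Gamma(s/2)}{\Gamma(\frac{s-m}{2})},
\]
since $n-m-(s-m)=n-s$. A rotation of $\mathbb{S}^n$ moves $\widetilde{\mathbb{S}}^{n-m}$ to $\mathbb{S}^{n-m}$ and commutes with $P_s$, which gives the statement as written. Density of $C^\infty$ in $H^s$ extends it to all $F\in H^s(\mathbb{S}^n)$, as in Lemma \ref{lm4.2}.

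\textbf{Step 2: the endpoint $s=n$.} Fix $F\in C^\infty(\mathbb{S}^n)$ and let $s\uparrow n$. The Gamma ratios in $P_s$ are continuous in $s$ on each spherical harmonic. The growth is polynomial in the degree, uniformly for $s$ near $n$, so $\int|P_s^{1/2}F|^2\to\int|P_n^{1/2}F|^2$. The same argument applies to $P_{s-m}$ on $\mathbb{S}^{n-m}$.

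For $\widetilde{\mathcal{Q}}_{s,m}$, the coefficient $\pi^{\frac{m-n}{2}}\Gamma(\frac{n+s-2m}{2})/\Gamma(\frac{s-m}{2})$ is continuous at $s=n$, and the kernel is continuous in $s$. I would show that $\widetilde{\mathcal{Q}}_{s,m}(\widetilde\tau_mF)$ converges in $H^n(\mathbb{S}^n)$. To do this I would mirror the proof of Lemma \ref{lm4.4}: rewrite $\widetilde{\mathcal{Q}}_{s,m}$ in the form $c\int|\zeta-\eta|^{s-n}P_{s-m}\widetilde\tau_mF\,d\eta$. This is the analogue of (\ref{4.3}), and it follows from (\ref{5.2}) and (\ref{5.6}). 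As $s\to n$ the kernel tends to $1$, and the limit is manifestly smooth. The cross term in the expanded square then converges as well.

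\textbf{Main obstacle.} The delicate point is the $H^n$ convergence of $\widetilde{\mathcal{Q}}_{s,m}$, equivalently justifying the interchange of the limit with the norm of $P_s^{1/2}(F-\widetilde{\mathcal{Q}}_{s,m})$. I expect this to reduce to dominated convergence on spherical-harmonic coefficients, using the representation above.

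Unlike the CR case, no logarithmic renormalization is needed: $P_n$ is defined directly and $\gamma$ stays finite at $s=n$. Finally, density extends the identity at $s=n$ from $C^\infty$ to $H^n$.
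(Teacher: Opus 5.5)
Your Step 1 is the paper's own argument, and it is correct: Lemma~\ref{lm5.2} is transported by (\ref{5.6}) in dimensions $n$ and $n-m$, the remainder term is handled with (\ref{5.5}), and the constant simplifies to $\gamma_n(s)/\gamma_{n-m}(s-m)=\pi^{m/2}2^m\Gamma(\frac{s}{2})/\Gamma(\frac{s-m}{2})$. The paper disposes of $s=n$ with the single remark ``take the limit $s\to n$''.

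The gap is in the mechanism you propose for that limit. Carrying (\ref{5.2}) through the projection as you describe gives
\[
\widetilde{\mathcal{Q}}_{s,m}(G)(\zeta)=\frac{1}{\gamma_{n-m}(s-m)}\int_{\mathbb{S}^{n-m}}|\zeta-\eta|^{s-n}P_{s-m,\mathbb{S}^{n-m}}G(\eta)\,d\eta,
\qquad
\frac{1}{\gamma_{n-m}(s-m)}=\frac{\Gamma(\frac{n-s}{2})}{\pi^{\frac{n-m}{2}}2^{s-m}\Gamma(\frac{s-m}{2})}.
\]
This prefactor blows up as $s\to n$. Both $\gamma_n(s)$ and $\gamma_{n-m}(s-m)$ tend to $0$, and only their ratio stays finite. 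Meanwhile $\int_{\mathbb{S}^{n-m}}P_{s-m}G=\frac{\Gamma(\frac{n+s-2m}{2})}{\Gamma(\frac{n-s}{2})}\int G\to 0$, because $P_{n-m,\mathbb{S}^{n-m}}$ kills constants.

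So ``the kernel tends to $1$ and the limit is manifestly smooth'' is really an $\infty\cdot 0$ claim. With a finite constant it would give $\widetilde{\mathcal{Q}}_{n,m}(G)\equiv 0$, whereas the explicit formula gives $\widetilde{\mathcal{Q}}_{n,m}(1)=1$. To mirror properly the lemma showing $\widetilde{\mathcal{P}}_{Q_n,m}(\widetilde{\mathcal{R}}_mF)\in\mathcal{P}_n$, you must split $|\zeta-\eta|^{s-n}=1+(|\zeta-\eta|^{s-n}-1)$. This yields
\[
\widetilde{\mathcal{Q}}_{n,m}(G)(\zeta)=\frac{1}{|\mathbb{S}^{n-m}|}\int_{\mathbb{S}^{n-m}}G\,d\eta-\frac{2^{1-n+m}\pi^{\frac{m-n}{2}}}{\Gamma(\frac{n-m}{2})}\int_{\mathbb{S}^{n-m}}\ln|\zeta-\eta|\,P_{n-m,\mathbb{S}^{n-m}}G(\eta)\,d\eta .
\]
This is exactly the logarithmic kernel you said would not be needed. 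It is $P_{n,\mathbb{S}^n}$ that needs no renormalization, not the extension operator written in this form. The limit also need not be smooth across $\mathbb{S}^{n-m}$; what you actually need is finite $P_n$-energy, not smoothness.

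You can repair Step 2 in one of two ways.

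\textbf{Use the explicit formula.} Drop the (\ref{4.3})-type representation. The explicit formula has coefficient $\pi^{\frac{m-n}{2}}\Gamma(\frac{n+s-2m}{2})/\Gamma(\frac{s-m}{2})$, and both it and the kernel are continuous at $s=n$. You then still owe the $H^{n/2}$-energy convergence that you flagged, which the paper does not supply either.

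\textbf{Use the logarithmic form at $s=n$.} Since $-\ln|\zeta-\eta|$ is, up to a multiplicative constant, the Green's function of $P_{n,\mathbb{S}^n}$ modulo constants, $P_n\widetilde{\mathcal{Q}}_{n,m}(G)$ is a multiple of $P_{n-m}G$ times surface measure on $\mathbb{S}^{n-m}$. The constant terms drop because $P_n1=0$ and $\int P_{n-m}G=0$. The identity at $s=n$ then follows from the resulting orthogonality, with no limit needed in the remainder term.

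Finally, for the inequality of Theorem~\ref{th1.8} alone, the limits of the two terms you already control, together with nonnegativity of the remainder, are enough.
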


\textbf{Proof of Theorem \ref{th1.8}.} The theorem follows directly by combining Theorem \ref{Beckner} with Lemma \ref{lm5.3}.


\begin{thebibliography}{99}

\bibitem{Ache&Chang}
A.~G. Ache,  S.-Y.~A. Chang,
 Sobolev trace inequalities of order four,
 Duke Math. J. 166(14)(2017), 2719-2748.


\bibitem{be1} W. Beckner, Sharp Sobolev inequalities on the sphere and the Moser-Trudinger inequality,  Ann.  Math. (2) 138 (1993), 213-242.

\bibitem{be2} W. Beckner, Functionals for multilinear fractional embedding, Acta Math. Sin. 31 (2015), 1-28.



\bibitem{bez}N. Bez, S. Machihara, M. Sugimoto, Extremisers for the trace theorem on the sphere, Math. Res. Lett.
 23(2016), No.3, 633-647.






\bibitem{br1}T. P. Branson, L. Fontana,  C. Morpurgo, Moser-Trudinger and Beckner-Onofri's inequalities on the CR sphere, Ann. of  Math. (2), 177 (2013), 1-52.



\bibitem{ca}L. A. Caffarelli,  L. Silvestre, An extension problem related to the fractional Laplacian, Comm. Part. Diff. Equa., 32 (2007), 1245-1260.

\bibitem{Carlen} E. A. Carlen, M. Loss,  Competing symmetries of some functionals arising in
mathematical physics,  Stochastic processes, physics and geometry (Ascona and Locarno,
1988), 277-288, World Sci. Publ., Teaneck, NJ, 1990.

	\bibitem{Case0} J.~S. Case,
			Boundary operators associated with the Paneitz operator.
			Indiana Univ. Math. J. 67(2018), no. 1, 293-327.

\bibitem{Case1}
J.~S. Case,
 Some energy inequalities involving fractional GJMS operators.
 Anal. PDE, 10(2)( 2017), 253-280.

\bibitem{Case2}
J.~S. Case,
 Sharp weighted Sobolev trace inequalities and fractional powers of
  the Laplacian,
 J. Funct. Anal. 279(4)(2020), 108567.

\bibitem{Case&Chang}
J.~S. Case S.-Y.~A. Chang,
 On fractional {GJMS} operators.
 Comm. Pure Appl. Math. 69(6)(2016), 1017-1061.

 \bibitem{Case&Luo}
J.~S. Case,  W.~Luo,
 Boundary operators associated with the sixth-order GJMS operator,
 Int. Math. Res. Not. IMRN, 14(2021), 10600-10653.

 \bibitem{Cha}M. Chatzakou, A. Kassymov, M. Ruzhansky,  Logarithmic Sobolev, Hardy and Poincar\'e inequalities on
the Heisenberg group, arXiv:2310.00992

 \bibitem{Chen&Zhang} X. Chen, S. Zhang,  A sharp Sobolev trace inequality of order four on three-balls,  Journal d'Analyse Math\'ematique, accepted, see avalible  arXiv:2403.00380.

 \bibitem{Co} M. Cowling, Unitary and uniformly bounded representations of some simple Lie groups,
In: Analysis and Group Representations, C.I.M.E. Napoli; Liguori, 1982, 49-128.

 \bibitem{ei}A. Einav, M. Loss, Sharp trace inequalities for fractional Laplacians, Proc. Amer. Math. Soc.  140(2012), No.12, 4209-4216.

\bibitem{es} J. F.  Escobar,  Sharp constant in a Sobolev trace inequality,  Indiana Univ. Math. J. 37(1988), no. 3, 687-698.

 \bibitem{Flynn&Lu&Yang1}J. Flynn, G. Lu and Q. Yang, Conformally covariant boundary operators and sharp higher order CR Sobolev trace inequalities on the Siegel domain and complex ball, Journal f\"ur die reine und angewandte Mathematik (Crelles Journal), 824 (2025), 39-87.

\bibitem{Flynn&Lu&Yang}J. Flynn, G. Lu and Q. Yang, Conformally covariant boundary operators and sharp higher order Sobolev trace inequalities on Poincar\'e Einstein manifolds, arXiv:2311.10070.



\bibitem{fo}  G.B. Folland, Harmonic Analysis in Phase Space, Ann. of Math. Stud., vol. 122, Princeton University Press, Princeton, NJ, 1989.










\bibitem{fo}G. B. Folland, Spherical harmonic expansion of the Poisson-Szeg\"o kernel for the ball,
Proc. Amer. Math. Soc 47 (1975), No.2, 401-408.


\bibitem{fr1} R.L. Frank, M.d.M. Gonz\'alez, D. D. Monticelli, J. Tan, An extension problem for the CR fractional
Laplacian, Adv. Math. 270 (2015), 97-137.

\bibitem{fl2} R. L. Frank, E. Lieb, Sharp constants in several inequalities on the Heisenberg group, Ann.
 Math. (2) 176 (2012), No.1, 349-381.



\bibitem{Gho}S. Ghosh1,  V. Kumar, M. Ruzhansky, Best constants in subelliptic fractional Sobolev and
Gagliardo-Nirenberg inequalities and ground states on
stratified Lie groups, Calc. Var. (2026) 65:28.

\bibitem{Gong} R. Gong, Q. Yang, S. Zhang, A simple proof of reverse Sobolev inequalities on the sphere and  Sobolev trace inequalities on the unit ball, J. Func. Anal.,
290 (2026), no. 9, 111380.










\bibitem{lieb2}E. H. Lieb,  M. Loss, Analysis,  Second Edition.   Graduate Studies in Mathematics, Vol. 14.
American Mathematical Society, Providence, RI, 2001.



\bibitem{Morpurgo} C. Morpurgo, Sharp inequalities for functional integrals and traces of conformally invariant operators, Duke
Math. J. 114 (3) (2002),  477-553.



\bibitem{on}E. Onofri, On the positivity of the effective action in a theory of random surfaces,
Comm. Math. Phys. 86 (1982), 321-326.

\bibitem{ron1}L. Roncal, S. Thangavelu, Hardy's inequality for fractional powers of the sublaplacian on the Heisenberg group, Adv. Math. 302 (2016), 106-158.



\bibitem{st}   N.K. Stanton, Spectral invariants of CR manifolds, Michigan Math. J. 36 (1989), 267-288.



\bibitem{tha2} S. Thangavelu, An introduction to the uncertainty principle. Hardy's theorem on Lie groups,  Progress in Mathematics 217. Birkh\"auser, Boston, MA, 2004.

\bibitem{Wang} Y. Wang, Q. Yang,  Sharp fractional Sobolev and related inequalities on H-type groups,  arXiv:2406.16278v2

\bibitem{yang} Q. Yang,  Sharp Sobolev trace inequalities for higher order derivatives, arXiv:1901.03945.



\end{thebibliography}
\end{document}